\newtheorem{Assumption}{Assumption}
\theoremstyle{plain}
\newtheorem{theorem}{Theorem}[section]
\newtheorem{lemma}[theorem]{Lemma}
\newtheorem{corollary}{Corollary}
\newtheorem{proposition}[theorem]{Proposition}
\theoremstyle{definition}
\newtheorem{definition}[theorem]{Definition}
\newtheorem{example}[theorem]{Example}
\theoremstyle{remark}
\newtheorem{remark}[theorem]{Remark}
\numberwithin{equation}{section}
\def\frakJ{\mathfrak{J}}
\newcommand{\calF}{\mathcal{F}}
\newcommand{\calP}{\mathcal{P}}
\newcommand{\calA}{\mathcal{A}}
\newcommand{\Y}{\mathcal{Y}}
\newcommand{\abs}[1]{\left\vert#1\right\vert}
\newcommand{\set}[1]{\left\{#1\right\}}
\newcommand{\tv}{\tilde{\zeta}}
\newcommand{\tX}{\tilde{X}}
\newcommand{\tb}{\tilde{b}}
\newcommand{\tFil}{\tilde{\mathcal{F}}}
\newcommand{\tProb}{\tilde{\mathds{P}}}
\newcommand{\tExp}{\tilde{\mathds{E}}}
\newcommand{\Exp}{\mathds{E}}
\newcommand{\tOmega}{\tilde{\Omega}}
\newcommand{\Prob}{\mathbf{P}}
\newcommand{\U}{\mathcal{U}}
\newcommand{\R}{\mathds{R}}
\newcommand{\e}{\mathbb{E}}
\newcommand{\seq}[1]{\left<#1\right>}
\newcommand{\norm}[1]{\left\Vert#1\right\Vert}
\newcommand{\B}{\mathcal{B}}
\newcommand{\Fil}{\mathds{F}}
\newcommand{\subjclass}[2][1991]{%
  \let\@oldtitle\@title%
  \gdef\@title{\@oldtitle\footnotetext{#1 \emph{Mathematics subject classification.} #2}}%
}
\newcommand{\keywords}[1]{%
  \let\@@oldtitle\@title%
  \gdef\@title{\@@oldtitle\footnotetext{\emph{Keywords.} #1.}}%
}
\title{\textsf{\textbf{Existence of optimal controls for stochastic Volterra equations}}}
\author{Andres  C\'ardenas \thanks{Universidad del Rosario, Bogot\'a, Colombia. \textbf{Email:} andres.cardenas@urosario.edu.co}\and
Sergio Pulido \thanks{Universit\'e Paris-Saclay, CNRS, ENSIIE,Univ Evry, Laboratoire de Math\'ematiques et Mod\'elisation d'Evry (LaMME), 23 Boulevard de France, 91037, Evry-Courcouronnes, France.\\ \textbf{E-mail:}  sergio.pulidonino@ensiie.fr}\and
 Rafael Serrano\thanks{Universidad del Rosario, Bogot\'a, Colombia. \textbf{Email:} rafael.serrano@urosario.edu.co}}
\begin{document}

\date{\today}

\maketitle



 
 \begin{abstract} 
We provide sufficient conditions that guarantee the existence of relaxed optimal controls in the weak formulation of stochastic control problems for stochastic Volterra equations (SVEs). Our study can be applied to rough processes that arise when the kernel appearing in the controlled SVE is singular at zero. The existence of relaxed optimal policies relies on the interaction between integrability hypotheses on the kernel and growth conditions on the running cost functional and the coefficients of the controlled SVEs. Under classical convexity assumptions, we can also deduce the existence of optimal strict controls.
\end{abstract}

\section{Introduction}

There has been a rapidly growing interest in studying stochastic Volterra equations (SVEs) of convolution type because they provide suitable models for applications that benefit from the memory and the varying levels of regularity of their dynamics. Such applications include, among others, turbulence modeling in physics \cite{barndorff2008time,barndorff2011ambit}, modeling of energy markets \cite{barndorff2013modelling}, and modeling of rough volatility in finance \cite{gatheral2018volatility}. 

In this paper, we consider finite-horizon control problems for SVEs of convolution type driven by a multidimensional Brownian motion with linear-growth coefficients and control policies with values on a metrizable topological space of Suslin type. We are particularly interested in singular kernels, such as fractional kernels proportional to $t^{H-\frac12}$ with $H \in (0,\frac{1}{2})$. These kernels are important because they allow modeling trajectories that are strictly less regular than those of classical Brownian motion. They have been used, for instance, in financial models with rough volatility which reproduce features of time series of estimated spot volatility \cite{gatheral2018volatility} and implied volatility surfaces \cite{alos2007short,bayer2016pricing}. 

Several studies have investigated the optimal control of SVEs. \cite{yong2006backward} uses the maximum principle method to obtain optimality conditions in terms of an adjoint backward stochastic Volterra equation. \cite{agram2015malliavin} also uses the maximum principle together with Malliavin calculus to obtain the adjoint equation as a standard backward SDE. Although the kernel considered in these papers is not restricted to convolution type, the required conditions do not allow the singularity of $K$ at zero. Recently, an extended Bellman equation has been derived in \cite{wang2021time} for the associated controlled Volterra equation.

The particular case of linear-quadratic control problems for SVEs, with controlled drift and additive fractional noise with Hurst parameter $H>1/2$,  has been studied in \cite{kleptsyna2003linear}. Similarly, in \cite{duncan2013linear} the authors consider a general Gaussian noise with an optimal control expressed as the sum of the well-known linear feedback control for the associated deterministic linear-quadratic control problem and the prediction of the response of a system to the future noise process. Recently, \cite{wang2018linear} investigated the linear-quadratic problem of stochastic Volterra equations by providing characterizations of optimal control in terms of a forward-backward system, but leaving aside its solvability, and under some assumptions on the coefficients that preclude (singular) fractional kernels of interest.

\cite{jaber2019linear} studied control problems for linear SVEs with quadratic cost function and kernels that are the Laplace transforms of certain signed matrix measures that are not necessarily finite. They establish a correspondence between the initial problem and an infinite dimensional Markovian problem on a certain Banach space. Using a refined martingale verification argument combined with the completion of squares technique, they prove that the value function is of linear quadratic
form in the new state variables, with a linear optimal feedback control, depending on nonstandard Banach space-valued Riccati equations. They also show that conventional finite dimensional Markovian linear-quadratic problems can approximate the value
function of the stochastic Volterra optimization problem.


We propose to study the existence problem using so-called relaxed controls in a weak probabilistic setting. This approach compactifies the original control system by embedding it into a framework in which control policies are probability measures on the control set, and the probability space is also part of the class of admissible controls. Thus, in this setting, the unknown is no longer only the control-state process but rather an array consisting of the stochastic basis and the control-state pair solution to the relaxed version of the controlled SVE.

In the stochastic case, relaxed control of finite-dimensional stochastic systems goes back to \cite{fleming-nisio}. Their approach was followed extensively by \cite{ElKaroui}, \cite{haussmann},  \cite{kurtz1998existence}, \cite{mezerdi2002necessary} and \cite{dufour2012existence}. Relaxed controls have also been used to study singular control problems  \cite{haussmann1995singular,kurtz2001stationary, andersson2009relaxed},  optimal control of stochastic PDEs \cite{brzezniak2013optimal}, mean-field games \cite{lacker2015mean,fu2017mean,cecchin2020probabilistic,benazzoli2020mean,bouveret2020mean,barrasso2020controlled},
mean-field control problems \cite{bahlali2017existence}, continuous-time reinforcement learning  \cite{wang2020reinforcement,wang2020continuous}, and optimal control of piece-wise deterministic Markov processes  \cite{costa2010average,costa2010policy,do2013continuous,bauerle2009mdp,bauerle2018optimal}


The main purpose of this paper is to provide a set of conditions that ensures the existence of optimal relaxed controls, see Theorem \ref{thm-main}. \textcolor{black}{We use methods that are similar to the approach employed by \cite{brzezniak2013optimal} for stochastic PDEs.} Our main contribution is that we allow kernels that are singular at zero, for instance, fractional Kernels proportional to $t^{H-\frac12}$ with $H \in (0,\frac {1}{2})$, and coefficients that are not necessarily bounded in the control variable. Under one additional assumption on the coefficients and cost function, familiar in relaxed control theory since the work of \cite{filippov}, we prove that the optimal relaxed value is attained by strict policies on the original control set, see Theorem \ref{thm-strict}. 

The paper is structured as follows. In Section \ref{sec:2} we establish some preliminary results on controlled stochastic Volterra equations (CSVEs). In Section \ref{sec:3} we describe the weak relaxed formulation of the control problem, state our main results, namely Theorems \ref{thm-main} and \ref{thm-strict}, and provide some examples. Section \ref{sec:4} contains the proofs of the main results. In Appendix \ref{A:A} we recall an important measurability result needed for the existence of optimal strict controls. Appendix \ref{A:B} contains an overview of the main results on relative compactness and limit theorems for Young measures that are used in the proofs of the main theorems.

\section{Controlled stochastic Volterra equations (CSVEs)}\label{sec:2}

Let $T>0$ and $d,d'\in\mathds N$ be fixed.  We consider the control problem of minimizing the cost functional of the form
\begin{equation}
\label{P0}
\e\left[ \int_0^{T}l(t,X_t,u_t )dt +G(X_T)  \right]
\end{equation}
subject to $X=(X_t)_{t\in [0,T]}$ being a $\R^d$-valued solution to the controlled  stochastic Volterra equation (CSVE) of the form
\begin{align}
	\label{EVC}
	X_t&=  x_0(t)+\int_0^tK(t-s)b(s,X_s,u_s)\,ds+ \int_0^tK(t-s)\sigma(s,X_s,u_s)\,dW_s, \ t\in [0,T]
\end{align}
over a certain class of control processes $(u_t)_{t\in [0,T]}$ taking values in a measurable control set $U.$ The  function $K\in L^2_{\textrm{loc}}(0,T;\R^{d\times d})$ is a given kernel, the initial condition $x_0$ is a deterministic $\R^d$-valued continuous function on $[0,T]$, and $(W_t)_{t\in [0,T]}$ is a $d'$-dimensional Brownian motion defined on a probability space $(\Omega,\mathcal F,\Prob)$ endowed with a filtration $\mathds F=(\calF_t)_{t\geq 0}$, satisfying the usual conditions. In our main existence results, we will consider solutions to \eqref{EVC} in a weak sense, see Definition \ref{def:relaxed_control}.

Throughout, we will assume the following condition on the kernel $K$:
\begin{Assumption}\label{Convcond} There exist $r \in (2,\infty)$ and $\gamma \in (0,2]$ such that $K\in L^{r}_{\rm loc}(\R_+;\R^{d\times d})$ and
\[
\int_{0}^{h}\abs{K(t)}^2 dt=\mathcal O(h^{\gamma}), \ \mbox{ and } \ \int_{0}^{T}\abs{K(t+h)-K(t)}^2 dt=\mathcal O(h^{\gamma}).
\]
\end{Assumption}
The following are examples of kernels that satisfy Assumption \ref{Convcond}:
\begin{enumerate}
	
	\item Let $K$ be locally Lipschitz. Then $K$ satisfies Assumption \ref{Convcond} with $\gamma=1$ and for any $r \in (2,\infty).$
	
	\item The fractional kernel $K(t)=t^{H-\frac12}$ with $H \in (0,\frac{1}{2})$ satisfies Assumption \ref{Convcond}  with  $r \in (2,\frac{2}{1-2H})$  and  $\gamma=2H.$
	
	\end{enumerate}
We consider, for now, a control set $U$ which is assumed to be a Hausdorff topological space endowed with the Borel $\sigma-$algebra $\B(U).$ We will assume later more specific conditions on $U.$
\begin{Assumption}\label{Assum1}
	$\phantom{assumption}$
	\begin{enumerate}
	
		\item The coefficients $b: [0,T]\times \R^d\times U\to \R^d$ and $\sigma: [0,T]\times \R^d\times U\to \R^{d\times d'}$ are continuous in $u\in U,$  and in $(t,x)\in[0,T]\times\R^d$ uniformly with respect to $u$. 
		
		\item There exists a measurable function $\vartheta_1:[0,T]\times U\to [0,+\infty]$
		and a constant  $c_{\rm lin}>0$ such that
		\begin{equation}\label{dar-2.1}
		\abs{b(t,x,u)}+\abs{\sigma(t,x,u)}\le c_{\rm lin}|x|+\vartheta_1(t,u),\quad (t,x,u)\in[0,T]\times \R^d\times U.
	\end{equation}

	\end{enumerate}
\end{Assumption}

The following result extends the a-priori estimates of Lemma 3.1 of  \cite{jaber2019affine} to the case of CSVEs. 
\begin{theorem}\label{a-priori}
Suppose that Assumption \ref{Assum1} holds and that $K\in L^{r}_{\rm loc}(\R_+;\R^{d\times d})$ for some $r>2.$ Let $(u_t)_{t\in[0,T]}$ be a $U$-valued adapted control process such that
\[
\Exp\int_0^T\vartheta_1(t,u_t)^p \,dt<\infty
\] for some $p$ satisfying $\frac{1}{p}+\frac{1}{r}<\frac{1}{2}.$ Let $X$ be a $\R^d$-valued solution to the controlled equation (\ref{EVC})
with initial condition $x_0\in\mathcal C(0,T;\R^d).$ Then,
	\begin{equation}\label{Est-Lp}
	\sup_{t\in [0,T]}\Exp\left[\abs{X_t}^{m} \right]\leq c,
	\end{equation}
for all $m> 2$ satisfying $\frac{1}{m} \in\bigl[\frac{1}{p}+\frac{1}{r},\frac{1}{2}\bigr),$ where the constant $c$ depends on $m,p,c_{\rm lin},T,C_B\footnote{$C_B$ is the constant in the Burkholder-Davis-Gundy inequality, see e.g. Section 4, Chapter IV in \cite{protter2005stochastic}.},$ $\abs{x_0}_{\mathcal C(0,T;\R^d)},$ $\Exp\int_0^T\vartheta_1(t,u_t)^{p}\,dt$ and  $L^ {2}-$continuously on $K|_{[0,T]}.$
\end{theorem}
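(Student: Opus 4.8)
The plan is to bound the $m$-th moment of each of the three terms in \eqref{EVC} and assemble them into a linear Volterra integral inequality for $f(t):=\Exp\abs{X_t}^m$, which I then close with a generalized Grönwall inequality for convolution kernels. First I would use $\abs{X_t}^m\le 3^{m-1}\bigl(\abs{x_0(t)}^m+\abs{\int_0^t K(t-s)b(s,X_s,u_s)\,ds}^m+\abs{\int_0^t K(t-s)\sigma(s,X_s,u_s)\,dW_s}^m\bigr)$ and take expectations. For the stochastic term I observe that, for each fixed $t$, the map $r\mapsto\int_0^r K(t-s)\sigma(s,X_s,u_s)\,dW_s$ is a martingale on $[0,t]$ (the integrand is adapted since $s\le t$), so the Burkholder--Davis--Gundy inequality gives $\Exp\abs{\int_0^t K(t-s)\sigma\,dW_s}^m\le C_B\,\Exp\bigl(\int_0^t\abs{K(t-s)}^2\abs{\sigma(s,X_s,u_s)}^2\,ds\bigr)^{m/2}$; note that the Volterra integral is \emph{not} a martingale in the variable $t$, so BDG must be applied time-by-time.

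Next I would insert the linear-growth bound \eqref{dar-2.1}, splitting each integrand into a state part controlled by $c_{\rm lin}\abs{X_s}$ and an exogenous part controlled by $\vartheta_1(s,u_s)$, and treat the two parts by \emph{different} inequalities. This separation is essential: $\abs{X_s}$ must appear with the full power $m$, whereas $\vartheta_1$ may only be integrated to a power strictly below $p$. For the state parts I would apply Jensen's inequality with respect to the finite measures $\abs{K(t-s)}\,ds$ (drift) and $\abs{K(t-s)}^2\,ds$ (diffusion); this preserves the convolution structure and yields bounds of the form $\norm{K}_{L^1(0,t)}^{m-1}\int_0^t\abs{K(t-s)}\,\Exp\abs{X_s}^m\,ds$ and $\norm{K}_{L^2(0,t)}^{m-2}\int_0^t\abs{K(t-s)}^2\,\Exp\abs{X_s}^m\,ds$. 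For the $\vartheta_1$ parts I would instead use Hölder's inequality to peel off $K$ in $L^r$, leaving $\vartheta_1$ raised to the conjugate powers $r/(r-1)$ (drift) and $2r/(r-2)$ (diffusion); the hypothesis $\tfrac1p+\tfrac1r<\tfrac12$ guarantees that both exponents are strictly less than $p$, so, after one further Hölder/Jensen step in $(\omega,t)$, these terms reduce to finite constants controlled by $\Exp\int_0^T\vartheta_1(t,u_t)^p\,dt$.

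Combining everything produces $f(t)\le C_0+\int_0^t\kappa(t-s)f(s)\,ds$, where $C_0$ collects $\abs{x_0}_{\mathcal C}^m$ and the $\vartheta_1$-constants, and $\kappa(\tau)=C_2\abs{K(\tau)}+C_3\abs{K(\tau)}^2$ lies in $L^1_{\rm loc}$ precisely because $r>2$ (so $\abs{K}\in L^1_{\rm loc}$ and $\abs{K}^2\in L^{r/2}_{\rm loc}\subset L^1_{\rm loc}$). I would then conclude by the generalized Grönwall inequality: if $R$ denotes the convolution resolvent of $\kappa$, then $\sup_{t\in[0,T]}f(t)\le C_0\bigl(1+\norm{R}_{L^1(0,T)}\bigr)=:c$, and the dependence of $c$ on $m,p,c_{\rm lin},T,C_B,\abs{x_0}_{\mathcal C},\Exp\int_0^T\vartheta_1^p\,dt$ and on the norms of $K$ can be read off directly from $C_0$ and $\kappa$.

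The main obstacle is twofold. The bookkeeping of exponents must be arranged so that the convolution kernel stays in $L^1_{\rm loc}$ while $\vartheta_1$ never exceeds the power $p$; this is exactly where $\tfrac1m\in[\tfrac1p+\tfrac1r,\tfrac12)$ is used. More delicate is justifying that $f$ is finite in the first place, so that the Grönwall step is not circular. A naive stopping-time localization is awkward here, since stopping at $\tau_n=\inf\{t:\abs{X_t}\ge n\}$ would randomize the kernel argument $t-s$ and destroy the martingale property used for the diffusion term. I would circumvent this by localizing the \emph{integrands} rather than the kernel: on $\{t<\tau_n\}$ one has $\mathbf{1}_{\{s<\tau_n\}}=1$ for all $s\le t$, so replacing $b(s,X_s,u_s)$ and $\sigma(s,X_s,u_s)$ by $b(s,X_s,u_s)\mathbf{1}_{\{s<\tau_n\}}$ and $\sigma(s,X_s,u_s)\mathbf{1}_{\{s<\tau_n\}}$ keeps the stochastic integral a genuine square-integrable martingale with bounded integrand, while agreeing with the original on $\{t<\tau_n\}$. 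Running the estimates above for $g_n(t):=\Exp[\abs{X_t}^m\mathbf{1}_{\{t<\tau_n\}}]\le n^m$ then yields $g_n(t)\le C_0+\int_0^t\kappa(t-s)g_n(s)\,ds$ with the same $C_0$, so Grönwall bounds $g_n$ uniformly in $n$; since a solution has almost surely locally bounded paths on $[0,T]$ we have $\tau_n\uparrow$ beyond $T$, and Fatou's lemma as $n\to\infty$ gives \eqref{Est-Lp}.
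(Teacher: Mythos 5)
Your proposal is correct and follows essentially the same route as the paper's proof: the $3^{m-1}$ decomposition, BDG plus Jensen for the stochastic term, Hölder in $L^{r}$ to reduce the $\vartheta_1$ terms to $\Exp\int_0^T\vartheta_1^p\,dt$, stopping times $\tau_n$ with the indicator pushed inside the integrands (this is exactly the paper's appeal to the Corollary of Theorem II.18 in \cite{protter2005stochastic}), and a convolution Grönwall/resolvent argument as in Lemma 3.1 of \cite{jaber2019affine}. The only cosmetic difference is that the paper first applies Cauchy--Schwarz to the drift so that both terms convolve against $\abs{K}^2$ alone, whereas you keep a mixed kernel $C_2\abs{K}+C_3\abs{K}^2$; both stay in $L^1_{\rm loc}$ and close the same way.
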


\begin{proof} For simplicity, but without loss of generality, we take $d=d'=1.$ Let $t\in[0,T]$ be fixed. Then, for any $m>1$ we have
\begin{align*}
\abs{X_t}^{m}&\le 3^{m-1}\left[\abs{x_0}^{m}+\abs{\int_0^{t}K(t-s)b(s,X_s,u_s )\,ds}^{m}+\abs{\int_0^{t}K(t-s)\sigma(s,X_s,u_s)\,dW_{s}}^{m}\right]\\
&= 3^{m-1}\left[\abs{x_0}^{m}+I+II\right].
\end{align*}
Using Burkholder-Davis-Gundy inequality, and Jensen's inequality with the measure
\[
\rho(ds):=\frac{K(t-s)^2\,ds}{\int_0^t \abs{K(t-\tau)}^{2} \, d\tau}
\]
we have
\begin{align*}
\Exp[II]&\le C_{B}   \e\left[\abs{\int_0^{t}K(t-s)^2 \sigma(s,X_s,u_s )^2\,ds}^{m/2}\right]\\
&\le C_{B}\norm{K}^{m-2}_{L^{2}} \int_0^{t}\abs{\sigma(s,X_s,u_s)}^{m}\abs{K(t-s)}^{2} \, ds.
\end{align*}
By condition (\ref{dar-2.1})
\begin{align}\label{ineq2}
\e[II]&\leq C_B 2^{m-1}c_{\rm lin}^m \norm{K}^{m-2}_{L^{2}} \left(  \int_0^t \e\abs{X_s}^mK(t-s)^2  \ ds + c_{\rm lin}^{-m}\e \int_0^t  \vartheta_1(s,u_s)^m K(t-s)^2\ ds  \right)\nonumber \\ 
&=k_1\left(\int_0^{t}\e\abs{X_s}^{m}\abs{K(t-s)}^2ds+ k_2 \right).
\end{align}
Note that $k_2$ is finite since by H\"older's inequality we have
\[
k_2=c_{\rm lin}^{-m}\e \int_0^t  \vartheta_1(s,u_s)^m K(t-s)^2\, ds\le c_{\rm lin}^{-m}T^{1-\frac{m}{p}-\frac{2}{r}} \left[\e\int_0^T \vartheta_1(s,u_s)^{ p } \, ds\right]^{m / p}\norm{K}_{L^r}^{2}.
\]
A similar argument for the first term $I$ yields
\begin{align}\label{ineq1}
\e[I]&\leq t^{m/2} 2^{m-1}c_{\rm lin}^m \norm{K}^{m-2}_{L^{2}} \left(  \int_0^t \e\abs{X_s}^mK(t-s)^2  \ ds + c_{\rm lin}^{-m}\e \int_0^t  \vartheta_1(s,u_s)^m K(t-s)^2\ ds  \right)\nonumber\\
&=t^{m/2}C_{B}^{-1}k_1\left(\int_0^{t}\e\abs{X_s}^{m}\abs{K(t-s)}^2ds+ k_2 \right).
\end{align}
For each $n\in\mathds N$ set $\tau_{n}=\inf \left\{t\geq 0: \abs{X_t}\geq n\right\} \wedge T.$  By the Corollary of Theorem II.18 in \cite{protter2005stochastic} we have that,
\[
\abs{X_t}^{m}\mathbbm{1}_{\left\{t<\tau_{n}\right\}} \le\abs{x_0  +\int_0^{t}K(t-s)(b(s,X_s\mathbbm{1}_{\left\{s<\tau_{n}\right\}},u_s )\,ds)+ \sigma(s,X_s\mathbbm{1}_{\left\{s<\tau_{n}\right\} },u_s )\,dW_s) }^{m}.
\]
Let $f_{n}(t)=\e\abs{X_t}^{m} \mathbbm{1}_{\left\{t<\tau_n\right\}}.$ Then, by  (\ref{ineq1}) and (\ref{ineq2}) we have
\[
f_{n}\leq \bar{k}k_2+\bar{k}\abs{K}^{2}*f_{n}
\]
where $\bar{k}= k_1(1+T^{m/2}C_{B}^{-1}).$ Using the same argument in the proof of Lemma 3.1 of  \cite{jaber2019affine}, this yields \eqref{Est-Lp} with a constant that only depends on $m, p, c_{\rm lin},C_B,$ $T,\abs{x_0}_{\mathcal C(0,T;\R^d)}$ and   $L^ {m}-$continuously, on $K|_{[0,T]}.$ 
\end{proof}
\begin{corollary}\label{S-H}
Under the same Assumptions of Theorem \ref{a-priori}, suppose further Assumption \ref{Convcond} also holds
with $\gamma$ satisfying $\gamma>2/m$, where $\frac{1}{m} = \frac{1}{p}+\frac{1}{r}$. Then  $X$ admits a version with paths in $\mathcal C^\alpha(0,T;\R^d)$ for any  $\alpha\in\bigl[0,\frac{\gamma}{2}-\frac{1}{m}\bigr)$. For this version, denoted again with $X,$ we have the following:
\begin{equation}
\label{Holder C2}
\e\left[\abs{X-x_0}_{\mathcal C^\alpha(0,T;\R^d)}^{m}\right] \leq c,
\end{equation}
with $c$ depending on $m,p,c_{\rm lin},T,C_B,\abs{x_0}_{\mathcal C(0,T;\R^d)}, \Exp\int_0^T \vartheta_1(t,u_t)^{p}\,dt$ and  $L^ {2}-$continuously on $K|_{[0,T]}.$
\end{corollary}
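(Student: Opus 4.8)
The plan is to read the Hölder regularity off a moment bound on the increments of $Y:=X-x_0$ through a quantitative Kolmogorov continuity criterion (e.g.\ via the Garsia--Rodemich--Rumsey inequality). Concretely, I would establish the existence of a constant $C$ with
\[
\Exp\bigl[\abs{Y_t-Y_s}^{m}\bigr]\le C\,\abs{t-s}^{m\gamma/2}\qquad(0\le s\le t\le T),
\]
and then observe that $m\gamma/2=1+(m\gamma/2-1)$ with $m\gamma/2-1>0$ precisely because $\gamma>2/m$. The GRR inequality then furnishes a modification of $Y$ with paths in $\mathcal{C}^\alpha(0,T;\R^d)$ for every $\alpha<\frac{m\gamma/2-1}{m}=\frac{\gamma}{2}-\frac1m$, together with the quantitative bound $\Exp[\norm{Y}_{\mathcal{C}^\alpha}^m]\le c$; this is exactly \eqref{Holder C2} (which is stated for $X-x_0=Y$), and taking $x_0+Y$ gives the stated version of $X$. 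So everything reduces to the increment estimate.

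To prove it, fix $0\le s<t\le T$, set $h:=t-s$, and split both integrals in \eqref{EVC} via $\int_0^t=\int_0^s+\int_s^t$, so that $Y_t-Y_s$ is a sum of four terms built from the shifted kernel $K(t-\cdot)-K(s-\cdot)$ on $[0,s]$ and the fresh kernel $K(t-\cdot)$ on $[s,t]$. A change of variables converts Assumption \ref{Convcond} into the two $L^2$-rates
\[
\int_0^s\abs{K(t-r)-K(s-r)}^2\,dr\le\int_0^T\abs{K(\tau+h)-K(\tau)}^2\,d\tau=\mathcal{O}(h^\gamma),\qquad \int_s^t\abs{K(t-r)}^2\,dr=\int_0^h\abs{K(\tau)}^2\,d\tau=\mathcal{O}(h^\gamma),
\]
so both relevant ``kernel masses'' are $\mathcal{O}(h^\gamma)$.

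Each of the four terms is then controlled by recycling the machinery of the proof of Theorem \ref{a-priori}: Burkholder--Davis--Gundy for the two stochastic pieces, Jensen's inequality against the kernel-weighted probability measure $\propto\abs{K}^2\,dr$ to pull the exponent $m/2$ inside, and the linear-growth bound \eqref{dar-2.1} to split each integrand into an $\abs{X_r}^m$-part and a $\vartheta_1(r,u_r)^m$-part. For the $\abs{X_r}^m$-part I would invoke the a-priori bound \eqref{Est-Lp}, $\sup_r\Exp\abs{X_r}^m\le c$; together with the two kernel estimates this yields exactly $\bigl(\int\abs{K}^2\bigr)^{m/2}=\mathcal{O}(h^{m\gamma/2})$, the target rate. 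The two drift pieces are treated identically but carry an extra factor $h^{m/2}$ from the time integration, so the stochastic pieces are the binding ones.

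The hard part is the $\vartheta_1$-part, because $\vartheta_1$ is only assumed $p$-integrable on $\Omega\times[0,T]$ and is unbounded in the control variable, so it cannot be extracted in sup-norm. Here I would use Hölder's inequality to pair $\vartheta_1^p$ (bounded through $\Exp\int_0^T\vartheta_1^p\,dt<\infty$, exactly as in the estimate of $k_2$ in Theorem \ref{a-priori}) against a power $\abs{K}^\beta$ of the kernel with $\beta=2r/m\in(2,r)$, and then turn $\int_0^h\abs{K}^\beta$ into a power of $h$ by interpolating $\norm{K}_{L^\beta(0,h)}$ between the $L^2$-rate $\mathcal{O}(h^{\gamma/2})$ supplied by Assumption \ref{Convcond} and the finite global norm $\norm{K}_{L^r}$; this interpolation step is what makes the argument work for a \emph{general} kernel and not only the fractional one. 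The crux is the exponent bookkeeping in this term: it is the calibration $\frac1m=\frac1p+\frac1r$ together with $\gamma>2/m$ that governs the resulting power of $h$ and hence the admissible Kolmogorov threshold, so I expect the bulk of the work to be a careful verification that this $\vartheta_1$-term keeps the attainable Hölder exponent in the claimed range $[0,\frac{\gamma}{2}-\frac1m)$; the $\abs{X_r}^m$-terms and the drift terms are routine once \eqref{Est-Lp} is available.
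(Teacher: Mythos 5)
Your reduction of the corollary to an increment moment bound plus Garsia--Rodemich--Rumsey is sound, and your treatment of the $\abs{X_r}^m$-parts and of both drift parts is correct; this is also the spirit of the argument behind the result the paper invokes (the paper's proof is one line: estimate \eqref{Est-Lp} combined with Lemma 2.4 of \cite{jaber2019affine}). The genuine gap is your central claim
\begin{equation*}
\e\bigl[\abs{Y_t-Y_s}^{m}\bigr]\le C\abs{t-s}^{m\gamma/2},\qquad Y:=X-x_0,
\end{equation*}
which is \emph{false} under the standing hypotheses, and it fails exactly in the $\vartheta_1$-part of the stochastic integral that you yourself flagged as ``the hard part''. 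Take $d=d'=1$, $U=\R$, $b\equiv 0$, $\sigma(t,x,u)=u$, $\vartheta_1(t,u)=\abs{u}$, $K(t)=t^{H-\frac12}$ with $H\in(\tfrac14,\tfrac12)$, $\gamma=2H$, and the deterministic control $u_t=\abs{t-t_0}^{-1/p+\eps}$ for a fixed $t_0\in(0,T)$ and small $\eps\in(0,1/p)$; for suitable $r$ and large $p$ all hypotheses of Theorem \ref{a-priori} and of the corollary hold, and $\e\int_0^T\vartheta_1(t,u_t)^p\,dt<\infty$. Then $Y_t=\int_0^t K(t-s)u_s\,dW_s$ is Gaussian, and since the contribution of $W$ on $[t_0,t_0+h]$ is independent of the past,
\begin{equation*}
\e\abs{Y_{t_0+h}-Y_{t_0}}^{m}\ \ge\ c_m\Bigl(\int_{t_0}^{t_0+h}(t_0+h-s)^{2H-1}(s-t_0)^{-2/p+2\eps}\,ds\Bigr)^{m/2}\ =\ c\,h^{m(H-\frac1p+\eps)},
\end{equation*}
which is of strictly larger order than $h^{mH}=h^{m\gamma/2}$ as $h\to0$. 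So no choice of H\"older pairing or interpolation can deliver your target bound; indeed your own pairing ($\beta=2r/m$ plus $L^2$--$L^r$ interpolation) yields the rate $\frac{\gamma m(m-2)}{2(r-2)}+\frac{(m-2)(r-m)}{2r}$, and the sharp uniform rate for this term (for the fractional kernel) is $h^{m(\gamma/2-1/p)}$ — both strictly below $h^{m\gamma/2}$.

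This is not a reparable bookkeeping slip inside your framework. Feeding the correct uniform rate $h^{m(\gamma/2-1/p)}$ into Kolmogorov/GRR at moment order $m$ — and $m$ is the largest order available, since \eqref{Est-Lp} caps the moments of $X$ at order $m$ and $\e\int_0^T\vartheta_1^p\,dt$ caps the control term at order $p>m$ only through Jensen — gives only $\alpha<\frac\gamma2-\frac1p-\frac1m$, strictly inside the claimed range $\bigl[0,\frac\gamma2-\frac1m\bigr)$: the loss is exactly $1/p$. To recover the full range one must abandon increment bounds that are uniform in $(s,t)$: for instance, retain the local mass in the estimate,
\begin{equation*}
\e\abs{Y_t-Y_s}^{m}\ \lesssim\ \abs{t-s}^{m(\gamma/2-1/p)}\Bigl(\int_s^t\e\,\vartheta_1(v,u_v)^p\,dv\Bigr)^{m/p}+\cdots,
\end{equation*}
and work with integrated (Besov/Sobolev--Slobodeckij) increments, $W^{\alpha+1/m,m}\embed\mathcal C^{\alpha}$: averaging $\bigl(\int_s^t\e\vartheta_1^p\bigr)^{m/p}$ over pairs with $t-s=h$ gains the missing factor $h^{m/p}$ and restores the rate $m\gamma/2$; alternatively, split $\sigma$ into an $\abs{X}$-dominated piece (handled exactly as you propose) and a $\vartheta_1$-dominated piece handled by stopping-time localization of $\int_0^\cdot\vartheta_1(t,u_t)^p\,dt$, which unlocks arbitrarily high moment orders for that piece. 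Be aware that the same subtlety is hidden in the paper's one-line proof: Lemma 2.4 of \cite{jaber2019affine} assumes $\sup_{t\le T}\e\bigl[\abs{b_t}^m+\abs{\sigma_t}^m\bigr]<\infty$ for the coefficient processes of \eqref{EVC}, and the $\vartheta_1(t,u_t)$ contribution does not satisfy this under the corollary's hypotheses — so your instinct about where the difficulty sits was right, but your resolution does not close it.
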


\begin{proof} Follows directly from the estimate (\ref{Est-Lp}) and Lemma 2.4 in \cite{jaber2019affine}. 
\end{proof}

In particular, one can prove the following existence result for solutions to the CSVE \eqref{EVC}.
\begin{corollary} \label{Strong-S}
Let u be a $U$-valued $\mathbb{F}$-predictable process. Assume that $K$ satisfies Assumption \ref{Convcond}, $b$ and $\sigma$ satisfy Assumption \ref{Assum1} and they are Lipschitz uniformly with respect to $(t,u) \in [0,T]\times U$,  and 
\[
\Exp\int_0^T \vartheta_1(t,u_t)^p \,dt<\infty
\] for some $p$ satisfying $\frac{1}{p}+\frac{1}{r}<\frac{1}{2}.$ Suppose further that $\gamma>2\left(\frac{1}{p}+\frac{1}{r}\right)$. Then there exists a unique continuous solution X to the CSVE \eqref{EVC}.
\end{corollary}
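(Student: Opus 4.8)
The plan is to realise $X$ as the unique fixed point of the Picard map attached to \eqref{EVC} and then upgrade it to a continuous solution via Corollary \ref{S-H}. Fix $m$ with $\frac1m=\frac1p+\frac1r$, so that the standing hypotheses give both $\frac1m<\frac12$ and $\gamma>\frac2m$, and work in the Banach space $\cH_m$ of $\R^d$-valued progressively measurable processes $Y$ with $\norm{Y}_{\cH_m}^m:=\sup_{t\in[0,T]}\e\abs{Y_t}^m<\infty$. On $\cH_m$ define
\[
(\Phi Y)_t = x_0(t)+\int_0^t K(t-s)b(s,Y_s,u_s)\,ds+\int_0^t K(t-s)\sigma(s,Y_s,u_s)\,dW_s.
\]
First I would check that $\Phi$ is a well-defined self-map of $\cH_m$: repeating the Burkholder--Davis--Gundy and Jensen estimates from the proof of Theorem \ref{a-priori} with $Y$ in place of the unknown, together with the linear-growth bound \eqref{dar-2.1} and the moment hypothesis $\e\int_0^T\vartheta_1(t,u_t)^p\,dt<\infty$, yields $\e\abs{(\Phi Y)_t}^m\le c\,(1+(\abs{K}^2*\e\abs{Y_\cdot}^m)(t))$, which is bounded on $[0,T]$ since $\abs{K}^2\in L^1(0,T)$ (because $K\in L^r_{\rm loc}\subset L^2(0,T)$) and $\e\abs{Y_\cdot}^m$ is bounded.

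Next comes the contraction estimate, which is where the singularity of $K$ forces care. For $Y,Z\in\cH_m$ set $d(s)=\e\abs{Y_s-Z_s}^m$. The inhomogeneous $\vartheta_1$-terms cancel in the difference, so the uniform-in-$(t,u)$ Lipschitz property of $b$ and $\sigma$, combined with the same BDG/Jensen argument, gives $\e\abs{(\Phi Y)_t-(\Phi Z)_t}^m\le c_L\,(\abs{K}^2*d)(t)$ for $t\in[0,T]$, where $c_L$ depends on the Lipschitz constants and $\norm{K}_{L^2}$. Iterating this bound $n$ times produces
\[
\sup_{t\le T}\e\abs{(\Phi^n Y)_t-(\Phi^n Z)_t}^m \le c_L^n\,\norm{(\abs{K}^2)^{*n}}_{L^1(0,T)}\,\norm{Y-Z}_{\cH_m}^m .
\]
Since $\abs{K}^2\in L^1(0,T)$, the $L^1$-norms of its convolution powers satisfy $\norm{(\abs{K}^2)^{*n}}_{L^1(0,T)}^{1/n}\to 0$ (equivalently, the resolvent of $\abs{K}^2$ exists in $L^1_{\rm loc}$), so $c_L^n\norm{(\abs{K}^2)^{*n}}_{L^1(0,T)}\to0$ and some iterate $\Phi^{n_0}$ is a strict contraction on $\cH_m$. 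Applying the Banach fixed-point theorem to $\Phi^{n_0}$ then yields a unique $X\in\cH_m$ with $\Phi X=X$, i.e. a solution of \eqref{EVC}.

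It remains to handle regularity and uniqueness. Because $\gamma>2\bigl(\frac1p+\frac1r\bigr)=\frac2m$, Corollary \ref{S-H} shows that $X$ admits a version with paths in $\mathcal C^\alpha(0,T;\R^d)$, which is the asserted continuous solution. For uniqueness among all continuous solutions, note that any continuous solution has finite $m$-moments by Theorem \ref{a-priori}; hence if $X,\tilde X$ are two such solutions then $d(t)=\e\abs{X_t-\tilde X_t}^m$ is bounded and satisfies $d\le c_L(\abs{K}^2*d)$, and the generalized (Volterra) Gronwall inequality driven by the resolvent of $c_L\abs{K}^2$ forces $d\equiv0$.

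The hard part will be exactly the singularity of $K$: since $\abs{K}^2$ is merely integrable and not bounded, the one-step map $\Phi$ need not be a contraction and ordinary Gronwall is unavailable, so the argument must instead exploit the super-exponential decay of $\norm{(\abs{K}^2)^{*n}}_{L^1(0,T)}$ (the $L^1$ resolvent of a Volterra kernel), precisely as in Lemma 3.1 of \cite{jaber2019affine}. The secondary point requiring attention is bookkeeping of the admissible exponents — the interplay of $p,r,\gamma$ guaranteeing both $\frac1m=\frac1p+\frac1r<\frac12$ and $\gamma>\frac2m$ — so that all the moment and Hölder estimates invoked from Theorem \ref{a-priori} and Corollary \ref{S-H} remain finite.
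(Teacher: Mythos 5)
Your proposal is correct and follows essentially the same route as the paper: the paper's proof simply invokes the Picard-iteration argument of Theorem 3.3 in \cite{jaber2019affine}, which is precisely your scheme of a priori moment bounds from Theorem \ref{a-priori}, a convolution-type contraction estimate exploiting the decay of $\norm{(|K|^2)^{*n}}_{L^1(0,T)}$ (the resolvent/Volterra--Gronwall mechanism), and the H\"older-regularity upgrade via Corollary \ref{S-H}. Packaging the iteration as a Banach fixed point for an iterate $\Phi^{n_0}$ rather than as a Cauchy sequence of Picard iterates is only a cosmetic difference.
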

\begin{proof} 
Using Theorem \ref{a-priori} and Corollary \ref{S-H}, the proof is completely analogous to the proof of Theorem 3.3 in \cite{jaber2019affine}.
\end{proof}

We will also frequently use the following result in the proof of the main existence result of relaxed controls. This alternative formulation of stochastic Volterra equations, by considering the integrated process $\int_0^{\cdot}X_s\, ds$, is inspired by the martingale problem approach in \cite{abi2019weak} and facilitates the justification of convergence arguments that will be useful in our setting.

\begin{lemma}\label{equiv-Ev}
Suppose that Assumption \ref{Assum1} holds, $K\in L^{2}_{\rm loc}(\R_+;\R^{d\times d})$ and
\begin{equation*}
\e\left[\int_0^T \vartheta_1^2(t,u_t) dt \right]< \infty.
\end{equation*}
Let $X$ be a solution to the CSVE (\ref{EVC}) and let $Z$ be the controlled process $Z_t=\int_0^t b(s,X_s,u_s)\,ds +\int_0^t \sigma(s,X_s,u_s)\,dW_s$. If $X$ has paths in $L^2_{\rm loc}$ then
\begin{equation}
 \label{Cresol1T}
\int _0^tX_s\,ds=\int_0^t x_0(s)\,ds + \int_0^t K(t-s)Z_s\,ds, \ \ t \in[0,T].
 \end{equation}
Conversely, if $X$ satisfies (\ref{Cresol1T}) with paths in $L^2_{\rm loc}$ then it solves the CSVE (\ref{EVC}).
\end{lemma}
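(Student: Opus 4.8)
The plan is to reduce both implications to two Fubini-type identities for the iterated Lebesgue and Itô integrals, the only nontrivial ingredient being the stochastic Fubini theorem. As in the proof of Theorem~\ref{a-priori} I take $d=d'=1$ without loss of generality, write $b_s:=b(s,X_s,u_s)$ and $\sigma_s:=\sigma(s,X_s,u_s)$ so that $dZ_s=b_s\,ds+\sigma_s\,dW_s$, and introduce the primitive $\bar K(v):=\int_0^v K(u)\,du$, which is continuous on $[0,T]$ since $K\in L^2_{\rm loc}\subset L^1_{\rm loc}$. The hypothesis $\e\int_0^T\vartheta_1^2(t,u_t)\,dt<\infty$ gives $\int_0^T\vartheta_1^2(t,u_t)\,dt<\infty$ a.s., which together with the linear-growth bound \eqref{dar-2.1} and $X\in L^2_{\rm loc}$ yields $\int_0^T(\abs{b_s}^2+\abs{\sigma_s}^2)\,ds<\infty$ a.s.; in particular $b,\sigma\in L^2_{\rm loc}$ pathwise.

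For the forward implication I substitute \eqref{EVC} into $\int_0^t X_s\,ds$ and treat the drift and diffusion contributions separately. For the drift term, Tonelli's theorem (integrability being clear from $K,b\in L^2_{\rm loc}\subset L^1_{\rm loc}$) and the change of variables $u=s-r$ give $\int_0^t\!\int_0^s K(s-r)b_r\,dr\,ds=\int_0^t\bar K(t-r)\,b_r\,dr$, using $\int_r^t K(s-r)\,ds=\bar K(t-r)$. For the diffusion term I invoke the stochastic Fubini theorem for the integrand $\mathbbm{1}_{\{r\le s\}}K(s-r)\sigma_r$, whose hypothesis
\[
\int_0^t\Bigl(\int_0^s\abs{K(s-r)}^2\sigma_r^2\,dr\Bigr)^{1/2}ds \le t^{1/2}\norm{\abs{K}^2\!*\sigma^2}_{L^1(0,t)}^{1/2}\le t^{1/2}\norm{K}_{L^2(0,t)}\norm{\sigma}_{L^2(0,t)}<\infty\quad\text{a.s.}
\]
follows from Cauchy--Schwarz and Young's convolution inequality; this yields $\int_0^t\!\int_0^s K(s-r)\sigma_r\,dW_r\,ds=\int_0^t\bar K(t-r)\,\sigma_r\,dW_r$. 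Hence $\int_0^t X_s\,ds=\int_0^t x_0(s)\,ds+\int_0^t\bar K(t-r)\,dZ_r$.

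It remains to identify the right-hand side of \eqref{Cresol1T} with the same quantity. Expanding $Z_s=\int_0^s dZ_r$ inside $\int_0^t K(t-s)Z_s\,ds$ and interchanging orders — again by Tonelli for the drift part and by the stochastic Fubini theorem for the integrand $\mathbbm{1}_{\{r\le s\}}K(t-s)\sigma_r$ (whose hypothesis is checked exactly as above, now using $\int_0^t\abs{K(t-s)}(\int_0^s\sigma_r^2\,dr)^{1/2}ds<\infty$) — I obtain, via $\int_r^t K(t-s)\,ds=\bar K(t-r)$, that $\int_0^t K(t-s)Z_s\,ds=\int_0^t\bar K(t-r)\,dZ_r$. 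Comparing with the previous paragraph proves \eqref{Cresol1T}. For the converse, note that this last computation uses only the definition of $Z$. Writing $Y_t:=x_0(t)+\int_0^t K(t-s)\,dZ_s$ for the right-hand side of \eqref{EVC}, the same stochastic Fubini argument gives $\int_0^t K(t-s)Z_s\,ds=\int_0^t\bigl(Y_w-x_0(w)\bigr)\,dw$ and, as a byproduct, $Y-x_0\in L^1_{\rm loc}$ a.s. Thus if $X$ satisfies \eqref{Cresol1T}, then $\int_0^t X_s\,ds=\int_0^t Y_w\,dw$ for every $t\in[0,T]$; since $X,Y\in L^1_{\rm loc}$, the Lebesgue differentiation theorem forces $X_t=Y_t$ for a.e.\ $t$, a.s., i.e.\ $X$ solves \eqref{EVC}.

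I expect the main obstacle to be the rigorous justification of the stochastic Fubini interchange for the singular kernel: one must check that the predictable integrand $K(s-r)\sigma_r$ meets the integrability hypothesis of the theorem and that the resulting Itô integral $\int_0^t\bar K(t-r)\sigma_r\,dW_r$ is well defined. Both reduce, as indicated, to Young's inequality $\norm{\abs{K}^2\!*\sigma^2}_{L^1}\le\norm{K}_{L^2}^2\norm{\sigma}_{L^2}^2$ and the pathwise $L^2_{\rm loc}$ membership of $\sigma$ coming from \eqref{dar-2.1}. The remaining steps — classical Fubini and the uniqueness of primitives — are routine.
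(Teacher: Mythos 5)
Your proposal is correct, but note that the paper itself offers no argument here: its proof of Lemma \ref{equiv-Ev} is a one-line citation of Lemma 3.2 in \cite{abi2019weak}, and what you have written is essentially a self-contained reconstruction of the proof of that cited lemma. The mechanism is the expected one: reduce both $\int_0^t X_s\,ds$ (by substituting the equation) and $\int_0^t K(t-s)Z_s\,ds$ (by expanding $Z$) to the common quantity $\int_0^t \bar K(t-r)\,dZ_r$, using Fubini--Tonelli for the drift part and the pathwise stochastic Fubini theorem for the martingale part, and then invert by Lebesgue differentiation for the converse. Your integrability checks are the right ones: $b,\sigma\in L^2_{\rm loc}$ a.s.\ follows from \eqref{dar-2.1}, $X\in L^2_{\rm loc}$ and $\int_0^T\vartheta_1^2(t,u_t)\,dt<\infty$ a.s., and the hypothesis of the stochastic Fubini theorem in its pathwise form follows from Cauchy--Schwarz and Young's convolution inequality, exactly as you indicate. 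What your write-up buys is independence from the reference and explicit visibility of where each hypothesis of the lemma is used; what the citation buys is brevity. Two small points you should make explicit. First, your argument yields \eqref{Cresol1T} for each fixed $t$ almost surely; to obtain it for all $t\in[0,T]$ on a single null set, observe that both sides are continuous in $t$ (since $Z$ has continuous paths and $K\in L^1_{\rm loc}$) and pass through rational $t$. Second, in the converse direction the process $Y_t=x_0(t)+\int_0^t K(t-s)\,dZ_s$ is only defined for a.e.\ $t$, because the stochastic convolution requires $\int_0^t\abs{K(t-s)}^2\sigma_s^2\,ds<\infty$, which under your pathwise bounds holds for a.e.\ $t$ but not necessarily for every $t$; consequently what you recover is that $X$ satisfies \eqref{EVC} $dt\otimes\Prob$-a.e. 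This is precisely the sense in which the equivalence is stated in Lemma 3.2 of \cite{abi2019weak}, and hence the sense in which the present lemma must be read, but it deserves a sentence in your proof rather than being left implicit.
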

\begin{proof} 
Follows from Lemma 3.2 of \cite{abi2019weak}. \end{proof}
%

\section{Relaxed control formulation}\label{sec:3}

The use of stochastic relaxed controls is inspired by the works of \cite{ElKaroui} and \cite{haussmann}. \textcolor{black}{ In what follows, $\mathcal{P}(U)$ denotes the set of all  probability measures on $\B(U)$ endowed with the $\sigma-$algebra generated by the projection maps
\begin{align*}
    \theta_C:\mathcal{P}(U)&\mapsto \pi(C)\\
     \pi &\mapsto [0,1],  \ \ C\in\B(U).
\end{align*}}
We associate a relaxed control system to the original control problem (\ref{P0})-(\ref{EVC}) as follows. First,  we extend the definition of coefficients and cost functionals with the convention
\[
\bar{F}(t,x,\pi)=\int_{U}F(t,x,u)\,\pi(du)
\]
provided that for each $t\in[0,T]$ and $x\in\R^d$ the map $F(t,x,\cdot)$ is integrable with respect to $\pi\in \mathcal P (U).$
\textcolor{black}{
\begin{definition}
	A stochastic process $\pi=(\pi_t)_{t\in [0,T]}$ with values in $\mathcal{P}(U)$ is called a \emph{stochastic relaxed control} (or relaxed control process) on $U$ if the map
	\begin{align*}
	    [0,T]\times\Omega &\mapsto \mathcal{P}(U) \\
     (t,\omega) &\mapsto \pi_t(\omega,\cdot)
	\end{align*}
	is predictable. In other words, a stochastic relaxed control on $U$ is a predictable process with values in $\mathcal{P}(U)$.
\end{definition}
}

Given a relaxed control process $(\pi_t)_{t\in [0,T]},$ the associated relaxed controlled equation now reads 
\begin{equation}
\label{Voltrm}
X_t=x_0(t)+\int_0^t K(t-s)\bar{b}(s,X_s,\pi_s)ds+\int_0^t K(t-s) \bar\sigma(s,X_s,\pi_s)\,dW_s, \ t\in [0,T],
\end{equation}
where $\bar\sigma$ is defined, with a slight abuse of notation, so that the following holds:
\[
\bar\sigma\bar\sigma^\top(t,x,\pi)=\int_U\sigma\sigma^\top(t,x,u)\,\pi(du),\quad t\in[0,T],\, x\in\R^d,\, \pi\in\calP(U).
\]
For the existence of $\bar\sigma$ see e.g, Theorem 2.5-a in  \cite{ElKaroui}. The relaxed cost functional is defined as
\[
\mathcal {J} (X,\pi)=\Exp\left[\int_0^T\bar l(t,X_t,\pi_t)\,dt+G\left(X_T\right)\right].
\]
Notice that the original system (\ref{P0})-(\ref{EVC}) controlled by a $U$-valued process $u=(u_t)_{t\in [0,T]}$ coincides with the relaxed system controlled by the Dirac measures $\pi_t=\delta_{u_t},$ $t\in[0,T]$. Moreover, since relaxed controls are just usual (strict) controls with control set $\calP(U),$ the results for strict controls in the previous section also hold for relaxed controls, with the control system defined in terms of the relaxed versions of coefficients, running cost and $\bar\vartheta_1(t,\pi).$

\subsection{Weak formulation of optimal control problem}
We study the existence of an optimal control for the stochastic relaxed control system in the following weak formulation.
\begin{definition}\label{def:relaxed_control}
	Let $T>0$ and $x_0\in\mathcal C(0,T;\R^d)$ be fixed. A weak admissible relaxed control for $(K,b,\sigma)$ is a system
	\begin{equation}
	\label{pi}
	\Theta=\left(\Omega,\mathcal F, \mathbf P, \mathbb F, W,X,\pi\right)
	\end{equation}
	such that the following hold:
	\begin{enumerate}
		\item $\left(\Omega,\mathcal F, \mathbf P \right)$ is a complete probability space endowed with a filtration $\mathds F=( \mathcal F_t)_{t\in[0,T]},$ satisfying the usual conditions,
		
		\item $W=(W_t)_{t\in[0,T]}$ is a $m$-dimensional Brownian motion with respect to $\mathds F,$
		
		\item $\pi=(\pi_t)_{t\in[0,T]}$ is a $\Fil$-predictable process with values in $\mathcal P(U)$, 
		
		\item $X=(X_t)_{t\in[0,T]}$ is a $\Fil$-adapted solution to the relaxed controlled equation (\ref{Voltrm}).
		
		\item The map $[0,T]\times \Omega\ni		(t,\omega)\mapsto \bar l(t,X_t(\omega),\pi_t(\omega))\in \R$
		belongs to $L^1([0,T]\times \Omega;\R)$ and $G(X_T)\in L^1(\Omega;\R)$.

	\end{enumerate}
\end{definition}

The set of weak admissible relaxed control systems with time horizon $[0,T]$ and initial value $x_0$ will be denoted by $\bar{\U}(x_0,T)$. Under this weak formulation, the relaxed cost functional is defined as
\begin{equation}\label{barJ}
\bar{\mathcal{J}}({\Theta})=\Exp^\Prob\left[\int_0^T\bar l(s,X_s^{\Theta},\pi_s^\Theta)\,ds+G\left(X_T^{\Theta}\right)\right], \ \ \ \Theta\in\bar{\U}(x_0,T).
\end{equation}
The relaxed control problem {\rm \textbf{(RCP)}} consists in minimizing $\bar{\mathcal{J}}$ over $\bar{\U}(x_0,T).$ Namely, we seek $\widetilde{\Theta}\in\bar{\U}(x_0,T)$ such that
\begin{equation}\label{barJ-min}
\bar{\mathcal{J}}(\tilde{\Theta})=\inf_{{\Theta}\in\bar{\U}(x_0,T)}\bar{\mathcal {J}}({\Theta}).
\end{equation}

\subsection{Main existence result}
In order to complete the set of assumptions for the main existence result, we need the following definition.
\textcolor{black}{
\begin{definition}\label{infcompact}
	A function $\vartheta:U\to[0,+\infty]$ is called \emph{inf-compact} if for every  $R\geq 0$ the level set $\set{\vartheta\le R}=\set{u\in U:\vartheta(u)\le R}$ is compact.
\end{definition}
Observe that, since $U$ is Hausdorff, for every inf-compact function $\vartheta$ the level sets $\set{\vartheta\le R}$ are closed. Therefore, every inf-compact function is lower semi-continuous and hence Borel-measurable. If $U$ is compact, the converse holds too, i.e. every lower semi-continuous function is inf-compact.
We will denote by $IC(0,T;U)$ the class of measurable functions $\vartheta:[0,T]\times U\to[0,+\infty]$ such that for all $t\in [0,T]$ the map   $\vartheta(t,\cdot)$ is inf-compact.}

\begin{Assumption}\label{Assum2}
	\begin{enumerate}
		\item The \textbf{control set} $U$ is a metrizable \textbf{Suslin} space i.e. there exists a Polish space $S$ and a continuous mapping $\phi:S\to U$ such that $\phi(S)=U$.
		
			\item The \textbf{running cost} function $l:[0,T]\times \R^d\times U\to (-\infty,+\infty]$ is measurable in $t\in [0,T]$ and lower semi-continuous with respect to $(x,u)\in \R^d\times U$.
		
		\item There exist
		$\vartheta_2\in IC(0,T;U)$ and
		constants $C_1\in\R, \ C_2>0$  such that $h$ satisfies the  following \textbf{coercivity} condition:
		\begin{equation}\label{ineq-coercive}
		\vartheta_2(t,u)^p\le C_1+C_2 l(t,x,u), \ \ \ (t,x,u)\in [0,T]\times \R^d\times U
		\end{equation}
		for some $p\geq 1.$
		
		\item The \textbf{final cost} function $G:\R^d\to\R$ is continuous.
	\end{enumerate}
\end{Assumption}

The following is the main result of this paper.
\begin{theorem}[Existence of optimal relaxed controls]
	\label{thm-main}
	Let $T>0$ and $x_0\in\mathcal C(0,T;\R^d)$ be fixed. Suppose that Assumptions \ref{Convcond}, \ref{Assum1} and \ref{Assum2} hold with $r>2,$ $\gamma\in(0,2]$ and $p$ satisfying $\frac{1}{p}+\frac{1}{r}<\frac12$ and $\gamma>2\left(\frac{1}{p}+\frac{1}{r}\right)$. Suppose further that $\vartheta_1 \leq  \vartheta_2$ and there exists $\Theta \in\bar{{\U}}(x_0,T)$ such that
	$\bar{\mathcal{J}}({\Theta})<+\infty,$ then \textbf{\emph{(RCP)}} admits a weak optimal relaxed control.
\end{theorem}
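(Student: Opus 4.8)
The plan is to apply the \emph{direct method}: take a minimizing sequence of weak admissible relaxed controls, use the coercivity together with the a-priori estimates of Section \ref{sec:2} to obtain uniform bounds, extract a convergent subsequence by a tightness/Skorokhod argument, and finally verify that the limit is admissible and attains the infimum. Since the coercivity condition (\ref{ineq-coercive}) gives $l\ge -C_1/C_2$ and there is an admissible system with finite cost, the infimum $\Lambda:=\inf_{\Theta\in\bar\U(x_0,T)}\bar{\mathcal J}(\Theta)$ is finite; pick $\Theta^n=(\Omega^n,\mathcal F^n,\mathbf P^n,\mathbb F^n,W^n,X^n,\pi^n)$ with $\bar{\mathcal J}(\Theta^n)\le\Lambda+1=:M$. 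Integrating (\ref{ineq-coercive}) against $\pi^n_t(du)\,dt\,d\mathbf P^n$ and using the cost bound $M$ together with a lower bound on the terminal cost, I obtain a uniform estimate $\sup_n\mathbb E^{\mathbf P^n}\int_0^T\overline{\vartheta_2^{\,p}}(t,\pi^n_t)\,dt\le C$. Since $\vartheta_1\le\vartheta_2$, Jensen's inequality then yields $\sup_n\mathbb E^{\mathbf P^n}\int_0^T\bar\vartheta_1(t,\pi^n_t)^p\,dt\le C$, which is precisely the integrability hypothesis of Theorem \ref{a-priori}.

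With this bound in hand, the restrictions $\frac1p+\frac1r<\frac12$ and $\gamma>2(\frac1p+\frac1r)$ let me invoke Theorem \ref{a-priori} and Corollary \ref{S-H} uniformly in $n$, giving $\sup_n\mathbb E^{\mathbf P^n}\bigl[\,|X^n-x_0|_{\mathcal C^\alpha(0,T;\R^d)}^{m}\,\bigr]\le C$ for some $\alpha>0$ and $m>2$. The H\"older bound makes the laws of $(X^n)$ tight in $\mathcal C(0,T;\R^d)$ by Arzel\`a--Ascoli, the laws of the Brownian motions $(W^n)$ are tight, and the uniform bound on $\overline{\vartheta_2^{\,p}}$ combined with the inf-compactness of $\vartheta_2(t,\cdot)$ yields tightness of the relaxed controls $(\pi^n)$, regarded as random Young measures on $[0,T]\times U$, by the relative-compactness criteria recalled in Appendix \ref{A:B}. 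Because $U$ is only Suslin rather than Polish, I would avoid the classical Skorokhod theorem on $\mathcal P(U)$ and instead pass to a single probability space $(\tilde\Omega,\tilde{\mathcal F},\tilde{\mathbf P})$ via Jakubowski's extension of the Skorokhod representation theorem (alternatively, working intrinsically with stable convergence of Young measures), obtaining along a subsequence copies $(\tilde X^n,\tilde\pi^n,\tilde W^n)$ with unchanged joint laws converging $\tilde{\mathbf P}$-a.s.\ to some limit $(\tilde X,\tilde\pi,\tilde W)$.

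The crux, and the step I expect to be the main obstacle, is to show that the limit $\tilde\Theta=(\tilde\Omega,\tilde{\mathcal F},\tilde{\mathbf P},\tilde{\mathbb F},\tilde W,\tilde X,\tilde\pi)$ is a genuine weak admissible relaxed control: one must construct the limit filtration $\tilde{\mathbb F}$ so that $\tilde W$ is an $\tilde{\mathbb F}$-Brownian motion and $\tilde\pi$ is $\tilde{\mathbb F}$-predictable, and prove that $\tilde X$ solves the relaxed equation (\ref{Voltrm}). Since the driving noises and filtrations vary with $n$, I would not try to pass to the limit in the It\^o integrals directly; instead I would pass to the limit in the integrated (martingale-problem) reformulation of Lemma \ref{equiv-Ev}, i.e.\ in $\int_0^t\tilde X^n_s\,ds=\int_0^t x_0(s)\,ds+\int_0^t K(t-s)\tilde Z^n_s\,ds$. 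The continuity of $b$ and $\sigma$ in $(x,u)$ (Assumption \ref{Assum1}), together with the stable convergence of $\tilde\pi^n$, lets me identify the limits of $\bar b(\cdot,\tilde X^n,\tilde\pi^n)$ and $\bar\sigma\bar\sigma^\top(\cdot,\tilde X^n,\tilde\pi^n)$, while the uniform moment bounds of Theorem \ref{a-priori} supply the uniform integrability needed to interchange limits and expectations and to keep the singular convolution under control.

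Finally, the lower semicontinuity of $l$ in $(x,u)$, the continuity of $G$, the a.s.\ convergence, and the lower-semicontinuity results for Young measures in Appendix \ref{A:B} give $\bar{\mathcal J}(\tilde\Theta)\le\liminf_n\bar{\mathcal J}(\Theta^n)=\Lambda$; since $\tilde\Theta\in\bar\U(x_0,T)$ this forces $\bar{\mathcal J}(\tilde\Theta)=\Lambda$, so $\tilde\Theta$ is the sought optimal relaxed control.
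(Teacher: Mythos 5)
Your overall strategy is the same as the paper's: a minimizing sequence, the coercivity bound $\sup_n\Exp^n\int_{U\times[0,T]}\vartheta_2(t,u)^p\,\pi^n_t(du)\,dt<\infty$, Theorem \ref{a-priori} and Corollary \ref{S-H} for uniform H\"older/moment bounds, tightness of the states in $\mathcal C(0,T;\R^d)$ and of the controls as random Young measures (Theorems \ref{flexiblytight} and \ref{prohym}), a Skorokhod-type representation, passage to the limit in the integrated reformulation of Lemma \ref{equiv-Ev}, and lower semicontinuity of the cost via Lemma \ref{Lem:lscym}, Fatou and the Fiber Product Lemma. However, there is a genuine gap exactly at the step you yourself flag as the crux. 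First, you take limits of $(\tilde X^n,\tilde\pi^n,\tilde W^n)$ but never establish tightness or convergence of the processes appearing inside the convolution, namely $\zeta^n_t=\int_0^t\bar b(s,X^n_s,\pi^n_s)\,ds$ and the martingale parts $Y^n_t=\int_0^t\bar\sigma(s,X^n_s,\pi^n_s)\,dW^n_s$. The identity of Lemma \ref{equiv-Ev} is an identity in these processes, and convergence of $(\tilde X^n,\tilde\pi^n,\tilde W^n)$ alone produces no limit object to insert into $\int_0^tK(t-s)\tilde Z_s\,ds$: one cannot invert the convolution with a singular $K$ to recover convergence of $\tilde Z^n$ from convergence of $\int_0^\cdot\tilde X^n_s\,ds$. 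This is why the paper proves tightness of $\zeta^n$ and $Y^n$ in $\mathcal C([0,T];\R^d)$ and carries the quadruple $(X^n,\zeta^n,Y^n,\mu_n)$ through Prohorov and Skorokhod.

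Second, identifying the limits of $\bar b(\cdot,\tilde X^n,\tilde\pi^n)$ and $\bar\sigma\bar\sigma^\top(\cdot,\tilde X^n,\tilde\pi^n)$ is necessary but not sufficient: to apply Lemma \ref{equiv-Ev} in the converse direction and conclude that $\tilde X$ solves \eqref{Voltrm}, you must exhibit the limit martingale part as a stochastic integral against a Brownian motion adapted to the limit filtration. Keeping the limit $\tilde W$ of the $\tilde W^n$ does not accomplish this; there is no argument showing $\tilde Y$ is an integral against $\tilde W$, and proving convergence of the stochastic integrals $\int_0^\cdot\bar\sigma\,d\tilde W^n$ with varying integrators is precisely what you declined to do (it would require additional tools such as a Kurtz--Protter type condition). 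The paper resolves this by discarding the $W^n$ altogether: it shows that $\tilde Y$ is a martingale with respect to $\tFil_t=\sigma\{(\tX_s,\tilde\pi_s):s\le t\}$ by passing the martingale property to the limit with uniform integrability, identifies $\bigl\langle\tilde Y\bigr\rangle_t=\int_0^t(\bar\sigma\bar\sigma^\top)(s,\tX_s,\tilde\pi_s)\,ds$ using the weak $L^1$ convergence of Lemma \ref{weak-conv-coeff} upgraded by Mazur's lemma (mere weak convergence, or ``stable convergence plus uniform integrability'' as you put it, must be converted into an a.s.\ identity, which is what the convex-combination argument does), and then invokes the martingale representation theorem on an extension of $(\tOmega,\tFil,\tProb)$ to manufacture a \emph{new} Brownian motion $\tilde W$ with $\tilde Y_t=\int_0^t\bar\sigma(s,\tX_s,\tilde\pi_s)\,d\tilde W_s$. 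Without these three ingredients --- convergence of the martingale parts, the limit martingale property, and the representation theorem on an extended space --- your outline cannot conclude that the limit system is an element of $\bar{\U}(x_0,T)$.
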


\begin{example}[Fractional kernel]\label{Ex-Frac}
For simplicity, we fix $d=d'=1,$ and consider the fractional kernel $K(t)=t^{H-\frac12}$ with \textcolor{black}{$H\in\bigl(\frac{1}{4},\frac{1}{2}\bigr).$} Suppose the coefficients and running cost function have the form
\begin{align*}
b(t,x,u)&=b_0(t,u)+b_1(t,u)x\\
\sigma(t,x,u)&=\sigma_0(t,u)+\sigma_1(t,u)x\\
l(t,x,u)&=l_0(t,u)+l_1(x)
\end{align*}
with
\begin{itemize}
	\item[\textbullet] $b_i,\sigma_i$ measurable and continuous in $u\in U,$ uniformly with respect to $t\in [0,T],$ for $i=0,1,$
	\item[\textbullet] $b_1,\sigma_1$ uniformly bounded in $(t,u),$
	\item[\textbullet] $l_0\in IC(0,T;M)$ and $l_1$ LSC and bounded from below.
\end{itemize}
Suppose further that $\abs{f(t,u)}^p\le C l_0(t,u)$ for both $f=b_0,\sigma_0$, some constant $C>0$ and $p$ sufficiently large satisfying
$\frac{1}{p}<2H-\frac{1}{2}.$ Then, there exists $r>2$ such that
\[
\frac12-H<\frac{1}{r}<H-\frac{1}{p}
\]
so that Assumption \ref{Convcond} holds for this choice of $r$ and $\gamma=2H.$ Assumptions \ref{Assum1}, \ref{Assum2} hold with $\vartheta_1=\vartheta_2=(Cl_0)^{1/p}$, $C_1=-C\inf l_1$ and $C_2=C.$ 
Then, existence of an optimal relaxed control follows from Theorem \ref{thm-main}.
\end{example}
\begin{remark}
Recently, \cite{jaber2019linear} proved an existence result for Linear-Quadratic control problems for linear Volterra equations, and obtained a linear feedback characterization of optimal controls. Unlike \cite{jaber2019linear}, we do not assume linearity in the coefficients with respect to the control variable. Our assumptions, however, do not cover cost functions with `quadratic growth' in the control variable, since we are forced to choose $p$ strictly larger than 2.
\end{remark}

\subsection{Existence of strict controls}
Our main result on the existence of optimal strict controls requires one additional assumption, familiar in relaxed control theory since the work of \cite{filippov}, to ensure existence of an optimal strict control. 

\begin{Assumption}\label{A3}
\begin{enumerate}
\item $U$ is a closed subset of a Euclidean space.

\item For each $(t,x)\in[0,T]\times\R^d,$ the set 
\begin{equation}\label{eq:Gamma}
\Gamma(t,x)=\left\{ \bigl(\sigma \sigma^{\top}(t,x,u),b(t,x,u),z\bigr): u\in U, \, z \geq l(t,x,u)\right\}
\end{equation}
 is a convex and closed subset of $\mathcal S^{d}\times \R^{d}\times \R.$
\end{enumerate}
 \end{Assumption}
\begin{theorem}[Existence of optimal strict controls]\label{thm-strict}
Suppose that Assumption \ref{A3} holds. Then, for each $\Theta=\left(\Omega,\mathcal F, \mathbf P, \mathds F, W,X,\pi\right) \in \bar{\U}(x_0,T)$ there exists a $U$-valued $\Fil$-predictable control  process $u=(u_t)_{t\in [0,T]}$ on the same probability space $(\Omega,\mathcal F, \mathbf P)$  such that
\begin{enumerate}
	\item $X$ satisfies the Volterra equation (\ref{EVC}) controlled by the strict control process $u=(u_t)_{t\in [0,T]}.$
	
	\item $\int_0^T\bar{l}(s,X_s,\pi_s)\,ds \geq \int_0^T l(s,X_s,u_s)\,ds,$ $\Prob$-a.s. 
\end{enumerate}
In particular, if the Assumptions of Theorem \ref{thm-main} also hold, there exists a weak optimal strict control for (\ref{P0})-(\ref{EVC}).
\end{theorem}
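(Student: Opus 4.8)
The plan is to perform a pointwise Filippov-type selection, to upgrade it to a predictable control by a measurable selection theorem, and finally to reconstruct the driving noise so that the \emph{given} trajectory $X$ solves the strict equation. Fix $\Theta=(\Omega,\mathcal F,\Prob,\Fil,W,X,\pi)\in\bar{\U}(x_0,T)$. For $\Prob\otimes dt$-a.e.\ $(t,\omega)$, write $x=X_t(\omega)$ and $\mu=\pi_t(\omega)$, and consider the image of $U$ under $u\mapsto\bigl(\sigma\sigma^{\top}(t,x,u),b(t,x,u),l(t,x,u)\bigr)$, which lies in $\Gamma(t,x)$ upon taking $z=l(t,x,u)$. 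Since the relaxed coefficients are exactly the $\mu$-barycenter of this map,
\[
\int_U\bigl(\sigma\sigma^{\top}(t,x,u),b(t,x,u),l(t,x,u)\bigr)\,\mu(du)=\bigl(\bar\sigma\bar\sigma^{\top}(t,x,\mu),\bar b(t,x,\mu),\bar l(t,x,\mu)\bigr),
\]
and $\Gamma(t,x)$ is closed and convex by Assumption \ref{A3}, a Hahn--Banach separation argument shows this barycenter again belongs to $\Gamma(t,x)$ (the relevant first moments being finite by the admissibility of $\Theta$ and the linear-growth bounds of Assumption \ref{Assum1}). By the very definition of $\Gamma(t,x)$ there is then a point $u^{*}\in U$ with $\sigma\sigma^{\top}(t,x,u^{*})=\bar\sigma\bar\sigma^{\top}(t,x,\mu)$, $b(t,x,u^{*})=\bar b(t,x,\mu)$ and $l(t,x,u^{*})\le\bar l(t,x,\mu)$.

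Next I would promote this pointwise existence to a genuine $\Fil$-predictable process. This is exactly where the measurability result recalled in Appendix \ref{A:A} enters: applied to the closed-valued, predictably measurable multifunction whose sections are the admissible points $u^{*}$ above (closed and nonempty by the continuity/lower semicontinuity hypotheses, and depending on $(t,\omega)$ only through the predictable pair $(X_t,\pi_t)$), it furnishes a predictable $U$-valued selection $u=(u_t)$ realizing the three relations $\Prob\otimes dt$-a.e. Item (2) of the statement is then immediate by integrating $l(s,X_s,u_s)\le\bar l(s,X_s,\pi_s)$ over $[0,T]$.

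The main difficulty is item (1): exhibiting a driving noise under which the same $X$ solves \eqref{EVC} with control $u$. The drift terms already coincide since $b(s,X_s,u_s)=\bar b(s,X_s,\pi_s)$, so by Lemma \ref{equiv-Ev} it suffices to represent the relaxed martingale part $N_t:=\int_0^t\bar\sigma(s,X_s,\pi_s)\,dW_s$ as an integral against $\sigma(\cdot,X,u)$. Here only the covariances match---$\bar\sigma$ itself need not equal $\sigma(s,X_s,u_s)$---but the identity $\sigma\sigma^{\top}(s,X_s,u_s)=\bar\sigma\bar\sigma^{\top}(s,X_s,\pi_s)$ gives $d\langle N\rangle_t=\sigma\sigma^{\top}(t,X_t,u_t)\,dt$. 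I would then invoke a martingale representation theorem of Ikeda--Watanabe type, possibly after enlarging the stochastic basis with an independent Brownian motion to cover the degenerate directions of $\sigma$, to produce a Brownian motion $\hat W$ with $N_t=\int_0^t\sigma(s,X_s,u_s)\,d\hat W_s$. Substituting this back yields that $X$ solves \eqref{EVC} driven by $(\hat W,u)$, which is item (1). Reconciling the relaxed and strict noises in this way---rather than the convexity/selection step, which is by now standard---is where I expect the real work to lie.

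For the final assertion I would combine both parts. Under the hypotheses of Theorem \ref{thm-main} there exists an optimal relaxed control $\widetilde\Theta$; feeding it into the construction above produces a strict control $u$ with
\[
\Exp^{\Prob}\!\left[\int_0^T l(s,X_s,u_s)\,ds+G(X_T)\right]\le\bar{\mathcal J}(\widetilde\Theta)=\inf_{\Theta\in\bar{\U}(x_0,T)}\bar{\mathcal J}(\Theta).
\]
On the other hand, every strict control embeds as the relaxed control $\pi_t=\delta_{u_t}$ with identical cost, so its cost is bounded below by the same infimum. The two inequalities force equality, whence $u$ is a weak optimal strict control for (\ref{P0})--\eqref{EVC}.
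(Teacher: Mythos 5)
Your proposal is correct and follows essentially the same route as the paper: the barycenter of the relaxed coefficients lies in the convex, closed set $\Gamma(t,X_t)$ by Assumption \ref{A3}, and the Filippov-type measurable selection theorem of Appendix \ref{A:A} (Theorem \ref{Filipov}), applied on $[0,T]\times\Omega$ equipped with the predictable $\sigma$-algebra, yields the predictable strict control satisfying the equality of drift/covariance and the cost inequality. The only difference is one of completeness: the paper compresses item (1) into ``the desired result follows,'' whereas you correctly make explicit the remaining step --- since only $\sigma\sigma^{\top}(t,X_t,u_t)=\bar\sigma\bar\sigma^{\top}(t,X_t,\pi_t)$ holds rather than equality of the volatilities themselves, one must invoke a martingale representation theorem (on a possible extension of the stochastic basis) to produce a new Brownian motion driving the strict equation, with Lemma \ref{equiv-Ev} transferring between the integrated and Volterra forms.
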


\begin{example}
Let $U=\mathbb{R}$ or $U=[-\tilde{U},\tilde{U}],$ with $0<\tilde{U}< \infty,$ $d=d'=1$ and $K(t)=t^{H-\frac12}$ with $H\in\bigl(\frac{1}{4},\frac{1}{2}\bigr).$ Suppose the coefficients  have the form
\begin{align*}
b(t,x,u)&=b_0(t,x)+b_1(t,x)u^2\\
\sigma(t,x,u)&=\sigma_1(t,x)u\\
l(t,x,u)&=l_0(t,u^2)+l_1(x),
\end{align*}
where
\begin{itemize}
	\item[\textbullet] $b_i,\sigma_1$ measurable and continuous in $x\in \R,$ uniformly respect to $t\in [0,T],$ for $i=0,1,$
	\item[\textbullet] $b_1,\sigma_1$ uniformly bounded in $(t,x)$ and $\abs{b_0(t,x)}\leq K\abs{x},$ with $K>0,$
	\item[\textbullet] $l_0(t,.)$ is a function convex on $\R^{+},$ for each $t\in [0,T],$ $l_0\in IC(0,T;U^2)$ and  $l_1$ LSC  bounded from below.
\end{itemize}
Suppose further that $\abs{\phi(t,u)}^p\le C l_0(t,u),$ with $\phi(t,u)=\max\{\abs{u}^2,\abs{u} \}$ for all $t\in[0,T],$ and $p$ satisfying $\frac{1}{p}<2H-\frac{1}{2}.$ As in the Example \ref{Ex-Frac} there exists $r>2$ such that  Assumptions \ref{Assum1}, \ref{Assum2} hold with $\vartheta_1=\vartheta_2=(Cl_0)^{1/p}$, $C_1=-C\inf l_1$ and $C_2=C.$ By Theorem \ref{thm-main} there is an optimal relaxed control. Let $\Gamma^1(t,x)=\left\{(\widetilde u,z): \widetilde u\in U^2, z\geq l_0(t,\widetilde u)+l_1(x)  \right\}.$ Then  $\Gamma$ in \eqref{eq:Gamma} can be written as an affine transformation of $\Gamma^1.$ More precisely,  $\Gamma(t,x)=\textbf{b}(t,x)+\textbf{A}(t,x)\Gamma^1(t,x)$ where
\[
\textbf{b}(t,x)=\begin{bmatrix} 0\\ b_0(t,x)\\0\end{bmatrix}, \ \ \textbf{A}(t,x) =\left[\begin{matrix} \sigma_1^2(t,x) &0\\ b_1(t,x)&0\\0&1\end{matrix}\right],  \ \ \ (t,x) \in [0,T]\times \R.
\]
Since $l_0$ is a function convex on $\R_+$  the epigraph $\Gamma^1$ is a convex set, then $\Gamma$ is a convex set and by Theorem \ref{thm-strict}  there is an optimal strict control.
\end{example}

\begin{remark}
If  $\sigma$ does not depend on $u\in U$, Assumption \ref{A3} holds if the set $\bigl\{ (b(t,x,u),z): u\in U, z \geq l(t,x,u)\bigr\}$ is convex and closed in $\R^{d}\times \R.$ This is the case, for instance, if the drift coefficient is affine in $u,$ i.e. it has the form $b(t,x,u)=b_0(t,x)+b_1(t,x)u$ and if $l(t,x,\cdot)$ is lower semi-continuous and convex.
\end{remark}

\section{Proofs of the main theorems}\label{sec:4}
\subsection{Relaxed controls and Young measures}
\begin{definition}
	Let $\mathcal L(dt)$ denote the Lebesgue measure on $[0,T]$ and $\mu$ be a bounded non-negative $\sigma-$additive measure on $\B\left(U\times [0,T]\right)$. We say that $\mu$ is a \emph{Young measure} on $U$ if and only if $\mu$ satisfies
	\begin{equation}\label{youngmeasure}
		\mu(U\times D)=\mathcal L(D),\quad D\in \B([0,T]),
	\end{equation}
	i.e. the marginal  of $\mu$ on $\B([0,T])$ is equal to the Lebesgue measure. 	We denote by $\Y(0,T;U)$ the set of Young measures on $U.$ We endow $\Y(0,T;U)$ with the \emph{stable topology} defined as the weakest topology for which the mappings 
	\[
	\Y(0,T;U)\ni\mu\mapsto\ \int_{U\times D} f(u)\,\mu(du,dt)\in\R
	\]
	are continuous, for every $D\in\B([0,T])$ and $f\in\mathcal{C}_b(U)$.
\end{definition}

The following result connects random Young measures with predictable relaxed controls. For the proof, see e.g. Section 3.3 of \cite{kushner2012weak} or Section 2.4 of \cite{cecchin2020probabilistic}. 
\begin{lemma}[\textbf{Predictable disintegration of random Young measures}]\label{Lem:disrym}
	Let $(\Omega,\calF,\Prob)$ be a probability space and let $U$ be a Radon space. Let $\mu:\Omega\to\Y(0,T;U)$ be such that, for every $J\in\B(U\times [0,T])$,  the mapping
	\[
	\Omega\ni \omega\mapsto \mu(w)(J)=\mu(\omega,J)\in [0,T]
	\]
is measurable. Then there exists a stochastic relaxed control $(\pi_t)_{t\in [0,T]}$ on $U$ such that for $\Prob-$a.e. $\omega\in\Omega$ we have
	\begin{equation}\label{disrym}
		\mu(\omega,C\times D)=\int_D \pi_t(\omega,C)\,dt,\quad C\in\B(U), \ D\in\B([0,T]).
	\end{equation}
Moreover, if   $\mathds F=( \mathcal F_t)_{t\in[0,T]}$ is a given filtration that satisfies the usual conditions and $\mu([0,\cdot)\times C)$ is $\mathds F-$adapted, for all $C\in\B(U),$ then $\pi$ is a $\mathds F-$predictable process.
\end{lemma}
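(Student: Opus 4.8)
The plan is to construct the disintegration $(\pi_t)_{t\in[0,T]}$ explicitly by Lebesgue differentiation on a countable generating algebra, and then to upgrade the resulting densities to genuine Borel probability measures using the Radon structure of $U$. Since $U$ is Radon it is separable metrizable, so $\B(U)$ is countably generated; I would fix a countable algebra $\mathcal A_0=\{C_n:n\in\Na\}$ that generates $\B(U)$ and contains $U$ and $\emptyset$. For each $C\in\mathcal A_0$ the marginal condition \eqref{youngmeasure} gives $\mu(\omega,C\times D)\le\mu(\omega,U\times D)=\mathcal L(D)$, so $D\mapsto\mu(\omega,C\times D)$ is a finite measure on $[0,T]$, absolutely continuous with respect to $\mathcal L$ with density in $[0,1]$. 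I would realize this density jointly measurably through dyadic difference quotients,
$$g_C(\omega,t)=\limsup_{k\to\infty}2^{k}\,\mu\bigl(\omega,C\times I_k(t)\bigr),$$
where $I_k(t)$ is the length-$2^{-k}$ dyadic interval containing $t$: each quotient is measurable in $\omega$ by the standing hypothesis on $\mu$ and in $t$ by inspection, so $g_C$ is $\calF\otimes\B([0,T])$-measurable, and by Lebesgue's differentiation theorem it is, for each fixed $\omega$, a version of $d\mu(\omega,C\times\cdot)/d\mathcal L$. The identities $g_U\equiv1$, $g_\emptyset\equiv0$, monotonicity, and finite additivity $g_{C\cup C'}=g_C+g_{C'}$ for disjoint $C,C'\in\mathcal A_0$ each hold $(\Prob\otimes\mathcal L)$-a.e.; as $\mathcal A_0$ is countable, discarding a single $(\Prob\otimes\mathcal L)$-null set $N$ leaves, for every $(\omega,t)\notin N$, a nonnegative normalized finitely additive set function $C\mapsto g_C(\omega,t)$ on $\mathcal A_0$.

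The main obstacle is promoting this finitely additive set function on $\mathcal A_0$ to a countably additive Borel probability measure $\pi_t(\omega,\cdot)$ on $\B(U)$, which is precisely where the Radon hypothesis is indispensable — finite additivity alone does not extend. I would exploit tightness: because $U$ is Radon each $\mu(\omega,\cdot)$ is inner regular by compact sets, and transporting this regularity through the differentiation shows that, off a further null set, $C\mapsto g_C(\omega,t)$ is inner regular by compacta on $\mathcal A_0$; the Carath\'eodory/Aleksandrov extension then forces countable additivity and yields a unique Radon probability extension $\pi_t(\omega,\cdot)\in\calP(U)$. An equivalent route embeds $U$ continuously into a compact metric space $\bar U$, uses a countable dense family in $C(\bar U)$ together with the Riesz representation to build $\bar\pi_t(\omega,\cdot)$, and then invokes the Radon-ness of the original Young measure to see that its mass sits on $U$. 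Setting $\pi_t(\omega,\cdot)$ arbitrarily on $N$, both sides of \eqref{disrym} agree on the generating algebra $\mathcal A_0$ and are measures, hence coincide.

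Finally I would read off measurability and predictability. By construction $(\omega,t)\mapsto\pi_t(\omega,C)=g_C(\omega,t)$ is measurable for every $C\in\mathcal A_0$; since the $\sigma$-algebra on $\calP(U)$ is generated by the evaluation maps $\theta_C$ and $\mathcal A_0$ generates $\B(U)$, this makes $(t,\omega)\mapsto\pi_t(\omega)$ measurable, so $\pi$ is a stochastic relaxed control. For the \emph{moreover} statement I would instead build $g_C$ from left dyadic quotients $\tfrac1h\mu(\omega,C\times[t-h,t))$: under the assumed adaptedness of $t\mapsto\mu(\omega,C\times[0,t))$ this process is $\Fil$-adapted and left-continuous in $t$ (since $[0,t)=\bigcup_{s<t}[0,s)$), hence $\Fil$-predictable, and the predictable $\sigma$-algebra is stable under the limiting operation, so $g_C$ and therefore $\pi$ are $\Fil$-predictable. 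The usual conditions on $\Fil$ ensure the discarded null sets cause no loss of measurability.
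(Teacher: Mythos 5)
Your overall strategy---dyadic Lebesgue differentiation on a countable generating algebra, a metric compactification plus Riesz representation to restore countable additivity, and left difference quotients for predictability---is the classical disintegration argument, and it is essentially what underlies the references (Kushner--Dupuis, Cecchin--Fischer) that the paper cites \emph{in lieu} of a proof; so a self-contained reconstruction is welcome. The predictability step in particular is sound: $t\mapsto\mu(\omega,C\times[0,t))$ is adapted by hypothesis and left-continuous (indeed continuous, because the time-marginal is Lebesgue and hence atomless), therefore predictable, and predictability survives countable limsups and the subsequent extension steps.

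There are, however, two genuine gaps. First, your opening claim that a Radon space is separable metrizable is false: a non-metrizable Suslin space (e.g.\ a separable Banach space with its weak topology) is Radon but not metrizable, and an uncountable discrete space of non-measurable cardinality is Radon but not separable; note the paper itself writes ``separable \emph{and} Radon'' as two separate properties. Consequently your proof covers only separable metrizable Radon $U$. That is exactly the setting in which the lemma is applied (Assumption III makes $U$ metrizable Suslin, hence separable, metrizable and Radon), but it is not the stated generality, and the restriction is structural rather than cosmetic: without separable metrizability neither the countable algebra $\mathcal A_0$ nor the embedding into a compact metric space exists. Second, your ``Route A'' would fail as written: for each fixed compact $K$ the density $g_K$ is defined only off a null set, and inner regularity of the sets $C\in\mathcal A_0$ involves an uncountable family of compacta, so there is no single null set off which regularity can be ``transported through the differentiation.'' Route B is the correct one, but its final step (``the mass sits on $U$'') conceals the same kind of difficulty: the full-measure Borel set $B_1\subset U$ carrying $\bar\pi_t(\omega,\cdot)$ a priori depends on $\omega$, which would destroy joint measurability of the restricted kernel. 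The repair is to invoke universal measurability of $U$ in $\bar U$ (equivalent to Radon-ness for separable metric spaces) for the single averaged marginal $\nu(B)=\Exp\int_0^T\bar\pi_t(\cdot,B)\,dt$, $B\in\B(\bar U)$: since every Borel subset of $\bar U\setminus U$ is $\nu$-null, this yields one Borel set $B_1\subset U$, independent of $(\omega,t)$, with $\bar\pi_t(\omega,B_1)=1$ for $\Prob\otimes\mathcal{L}$-a.e.\ $(\omega,t)$; one then sets $\pi_t(\omega,C)=\bar\pi_t(\omega,\tilde C\cap B_1)$ where $\tilde C\cap U=C$, and $\pi_t(\omega,\cdot)=\delta_{u_0}$ on the exceptional (predictable, null) set, preserving both joint measurability and predictability. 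With these two repairs your argument is complete for the metrizable Suslin setting in which the lemma is actually used.
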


\begin{remark}
We will  denote the disintegration formula (\ref{disrym}) by $\mu(du,dt)=\pi_t(du)\,dt$. Note that $\pi_t(C)$ can be seen as the time-derivative of $\mu([0,t)\times C)$ that exists for almost every $t\in[0,T],$ for all $C\in\mathcal B(U).$ 
\end{remark}

\begin{remark}
	It can be proved (see e.g. Remark 3.20  \cite{crauel}) that if $U$ is a separable and metrisable topological space, then $\mu:\Omega\to\Y(0,T;U)$ is measurable with respect to the Borel $\sigma-$algebra generated by the stable topology if and only if for every $J\in\B(U\times [0,T])$ the mapping
	\[
	\Omega\ni \omega\mapsto \mu(w)(J)\in [0,T]
	\]
	is measurable. This justifies referring to the maps considered in Lemma \ref{Lem:disrym} as random Young measures.
\end{remark}
For the two following Lemmas, $E$ denotes a Euclidean space with norm $\abs{\cdot}$ and inner product $\seq{\cdot,\cdot}$. 
\begin{lemma}\label{Sigmat-meas}
	Let $f:[0,T]\times\R^d\times U\to E$ be a Borel-measurable function, continuous in $u\in U,$ and continuous in $x\in\R^d$ uniformly with respect to $u\in U$, satisfying the growth condition
\begin{equation}\label{cond-f}
	|f(t,x,u)|_E\le c_{\rm lin}|x|^\delta+\vartheta(t,u)
\end{equation}
with $\vartheta\in IC(0,T;U),$ for some $\delta\geq 1.$ For $\beta\geq 1$ fixed, we denote
\[
\Y^\beta(0,T;U):=\left\{\mu\in\Y(0,T;U):\vartheta\in L^\beta(\mu) \right\}.
\]
Then, for each $t\in [0,T]$,  the mapping $\Sigma_t:\mathcal{C}(0,T;\R^d)\times\Y^\beta(0,T;U)\to \R^d$ defined by
	\begin{equation}\label{Sigmat}
		\Sigma_t(x,\mu)=\int_{U \times [0,t]} f(s,x(s),u)\,\mu(du,ds),
	\end{equation}
	is Borel-measurable.
\end{lemma}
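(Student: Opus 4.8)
The plan is to establish well-definedness first and then obtain measurability through a three-layer approximation: truncate $f$ in its values to reduce to bounded integrands, handle the bounded case by combining separate continuity in $x$ with separate measurability in $\mu$, and finally invoke a Carath\'eodory-type lemma to upgrade separate to joint measurability. Working coordinatewise, I may assume $E=\R$. For well-definedness fix $(x,\mu)$: since $x$ is continuous, $\|x\|_\infty:=\sup_s|x(s)|<\infty$, and \eqref{cond-f} gives $|f(s,x(s),u)|\le c_{\rm lin}\|x\|_\infty^\delta+\vartheta(s,u)$; as $\mu$ is a Young measure its total mass is $\mathcal L([0,T])=T<\infty$, so the constant term is $\mu$-integrable, and $\vartheta\in L^\beta(\mu)\subset L^1(\mu)$ because $\beta\ge1$ and $\mu$ is finite. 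Hence the integrand is $\mu$-integrable on $U\times[0,t]$ and $\Sigma_t(x,\mu)$ is well defined, with the same bound serving as a dominating function. For the reduction I would set, for $R>0$, $f^R:=f\,\tfrac{R}{R\vee|f|}$; this is Borel in $s$, jointly continuous in $(x,u)$ wherever $f$ is (continuity in $u$ together with continuity in $x$ uniformly in $u$ yields joint continuity of $f(s,\cdot,\cdot)$), bounded by $R$, with $f^R\to f$ pointwise and $|f^R|\le|f|$. Writing $\Sigma_t^R(x,\mu):=\int_{U\times[0,t]}f^R(s,x(s),u)\,\mu(du,ds)$, dominated convergence against the majorant above gives $\Sigma_t^R(x,\mu)\to\Sigma_t(x,\mu)$ for every $(x,\mu)$, so it suffices to show each $\Sigma_t^R$ is jointly measurable.

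For the bounded case, abbreviate $g:=f^R$. If $\mu$ is fixed and $x_n\to x$ in $\mathcal{C}(0,T;\R^d)$, then $x_n(s)\to x(s)$ for each $s$, so $g(s,x_n(s),u)\to g(s,x(s),u)$ by continuity of $g$ in $x$; since $|g|\le R$ and $\mu$ is finite, dominated convergence gives $\Sigma_t^R(x_n,\mu)\to\Sigma_t^R(x,\mu)$, so $\Sigma_t^R(\cdot,\mu)$ is continuous. For fixed $x$, I would treat $\mu\mapsto\Sigma_t^R(x,\mu)$ by a functional monotone class argument. Let $\mathcal{H}$ be the vector space of bounded Borel $h:[0,T]\times U\to\R$ for which $\mu\mapsto\int_{U\times[0,t]}h\,d\mu$ is Borel on $\Y^\beta(0,T;U)$. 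By the definition of the stable topology, for $D\in\B([0,T])$ and $\psi\in\mathcal{C}_b(U)$ the map $\mu\mapsto\int_{U\times(D\cap[0,t])}\psi\,d\mu$ is continuous, so $\mathbf{1}_D(s)\psi(u)\in\mathcal{H}$; these form a multiplicative class $\mathcal{M}$, and since $U$ is separable metrizable one has $\sigma(\mathcal{M})=\B([0,T])\otimes\B(U)=\B([0,T]\times U)$. As $\mathcal{H}$ contains the constants (where $\int_{U\times[0,t]}1\,d\mu=t$) and is closed under uniformly bounded monotone limits (a pointwise monotone limit of measurable maps is measurable, by monotone convergence), the monotone class theorem yields $\mathcal{H}\supseteq\{\text{bounded Borel functions}\}$. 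Applied to $h(s,u):=g(s,x(s),u)$, this shows $\mu\mapsto\Sigma_t^R(x,\mu)$ is measurable.

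Finally, since $\mathcal{C}(0,T;\R^d)$ is a separable metric space and $\Sigma_t^R$ is continuous in $x$ for each $\mu$ and measurable in $\mu$ for each $x$, the standard Carath\'eodory lemma applies: taking a countable dense set $\{x_k\}$ and a Borel partition of $\mathcal{C}(0,T;\R^d)$ into small balls $B_k$ about the $x_k$, the maps $(x,\mu)\mapsto\sum_k\mathbf{1}_{B_k}(x)\,\Sigma_t^R(x_k,\mu)$ are jointly measurable and converge to $\Sigma_t^R(x,\mu)$ by continuity in $x$, so $\Sigma_t^R$ is jointly Borel measurable; letting $R\to\infty$ and using the truncation limit, $\Sigma_t$ is jointly Borel measurable. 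I expect the main obstacle to be exactly the passage from separate to joint measurability together with the $\mu$-measurability of $\mu\mapsto\int h\,d\mu$ for general bounded Borel $h$: the stable topology only directly controls product integrands $\mathbf{1}_D(s)\psi(u)$, so it is the monotone class extension (relying on $\B([0,T])\otimes\B(U)=\B([0,T]\times U)$, valid because the metrizable Suslin space $U$ is separable) combined with the separability of $\mathcal{C}(0,T;\R^d)$ that bridges the gap.
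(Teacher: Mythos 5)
Your proof is correct, and its overall skeleton matches the paper's: truncate $f$ to reduce to bounded integrands, prove continuity in $x$ for fixed $\mu$ by dominated convergence, prove measurability in $\mu$ for fixed $x$, and conclude joint measurability by a Carath\'eodory-type argument. The genuinely different ingredient is the measurability in $\mu$. The paper truncates coordinatewise and observes that the truncated integrand is bounded and continuous in $u$, so that Theorem \ref{thm:contym} makes each truncated functional \emph{continuous} on $\Y^\beta(0,T;U)$ for the stable topology, and $\Sigma_t(x,\cdot)$ is then a pointwise limit of stably continuous maps. You instead never use the continuity of $f$ in $u$: you obtain Borel measurability of $\mu\mapsto\int_{U\times[0,t]}h\,d\mu$ for \emph{every} bounded Borel $h$ via a functional monotone class argument, starting from the product integrands $\mathbf{1}_D(s)\psi(u)$ that define the stable topology and using $\B([0,T])\otimes\B(U)=\B([0,T]\times U)$, valid by second countability of $U$. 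Your route is more self-contained (it does not lean on Theorem \ref{thm:contym}, hence not on Lemma \ref{Lem:lscym}) and proves something slightly stronger, since neither continuity in $u$ nor the uniformity of the continuity in $x$ is actually needed for measurability; the price is a longer argument and the loss of the extra information that the truncated functionals are stably continuous rather than merely Borel. In the final step you also place the separability hypothesis where the standard Carath\'eodory argument needs it, namely on $\mathcal{C}(0,T;\R^d)$, the variable in which $\Sigma^R_t$ is continuous, and you spell out the partition approximation; the paper instead cites Lemma 1.2.3 of \cite{cadfval} while invoking separability and metrizability of $\Y^\beta(0,T;U)$, so your formulation is arguably the cleaner one on this point.
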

\begin{proof}  We fix $t\in [0,T].$ 	For each $N\in\mathds{N}$ and $i\in\set{1,\ldots,d}$ define
	\[
	\phi^i_N(\mu)=\int_{U \times [0,t]} \min\{N,f^i(s,x(s),u)\} \,\mu(du,ds), \ \ \ \mu\in\Y^\beta(0,T;U).
	\]
	The integrand in the above expression is bounded and continuous with respect to $u\in U$. Therefore, by Theorem \ref{thm:contym} $\phi_N$ is continuous for each $N\in\mathds{N}$,  and by dominated convergence, $\phi_N^i(\mu)\to \Sigma_t^i(x,\mu)$ as $N\to\infty$ for all $\mu\in\Y^\beta(0,T;U)$. Hence, $\Sigma_t(x,\cdot)$ is measurable.
	Now, we prove that for $\mu\in\Y^\beta(0,T;U)$ fixed, the map $\Sigma_t(\cdot,\mu)$ is continuous. Let $x_n\to x$ in $\mathcal{C}\left(0,T;\R^d\right)$. Then, by assumption we have
	\[
	\bigr|f(s,x(s),u)-f(s,x_n(s),u)\bigr|\to 0 \mbox{ as }n\to\infty,\quad (s,u)\in [0,t]\times U.
	\]
	Moreover, since $(x_n)$ converges in $\mathcal{C}\left(0,T;\R^d\right)$, it is bounded and there exists $\bar\rho>0$ such that 
	\[
	\sup_{s\in [0,T]}\bigl|x_n(s)-x(s)\bigr| < \bar\rho, \ \ \ \forall n\in\mathds{N}.
	\]
	Therefore by (\ref{dar-2.1}), we have
	\begin{eqnarray*}
		&&\bigl|f(s,x(s),u)-b(s,x_n(s),u)\bigr|
		\le c_{\rm lin}\left\{\bar\rho+2\abs{x}_{\mathcal{C}(0,T;\R^d)}\right\}+\vartheta(s,u)
	\end{eqnarray*}
	As $\vartheta$ belongs to $L^1([0,T]\times U;\mu)$,  so does the right side of the above inequality. Therefore, by Lebesgue's dominated convergence theorem we have
	\begin{eqnarray*}
	\abs{\Sigma_t(x,\mu)-\Sigma_t(x_n,\mu)}
	\le \int_{U\times [0,t]}\abs{f(s,x(s),u)-f(s,x_n(s),u)}\,\mu(du,ds)\to 0
	\end{eqnarray*}
	as $n\to\infty$,  that is, $\Sigma_t(\cdot,\mu)$ is continuous. Since $\Y^\beta(0,T;U)$ is separable and metrizable, by Lemma 1.2.3 in \cite{cadfval} it follows that $\Sigma_t$ is jointly measurable. \end{proof}

Recall that $\phi^n\rightharpoonup \phi$ weakly in $L^1([0,T]\times\Omega;E)$ if 
\[
\Exp\int_0^T \seq{\phi^n(t),\psi(t)}\,dt \to  \Exp\int_0^T \seq{\phi(t),\psi(t)}\,dt,\quad \forall\psi \in L^\infty([0,T]\times\Omega;E).
\]
We have the following result. 
\begin{lemma}\label{weak-conv-coeff}
	Let $f:[0,T]\times\R^d\times U\to E$ be a Borel-measurable function, continuous in $x\in\R^d$ uniformly with respect to $u\in U$, satisfying the growth condition (\ref{cond-f}) 
	for some $\delta\geq 1,$ with $\vartheta\in IC(0,T;U).$ Let $(X^n)_{n\in\mathds N}$ a sequence of $\R^d$-valued processes, and $\mu^n(du,dt)=\pi_t^n(du)\,dt$ a sequence of stochastic relaxed controls defined on the same probability space $(\Omega,\mathcal F,\Prob)$ such that $X^n\to X$ point-wise $\Prob$-a.s. and  in $L^{\beta\delta}(\Omega\times[0,T];\R^d)$ for some $\beta>1,$ and $\mu^n\to \mu$ in the stable topology $\Prob$-a.s., with $\mu(du,dt)=\pi_t(du,dt).$ Suppose further
	\begin{equation}\label{unif-bound-eta-beta}
		\sup_{n\in\mathds N}\Exp^{\Prob}\int_{U\times[0,T]}\vartheta(t,u)^\beta\,\mu^n(du,dt)<\infty.
	\end{equation}
	For each $n\in\mathds{N}$, set $f^n_t=\bar f(t,X_t^n,\pi_t^n), \ \hat f^n_t=\bar f(t,X_t,\pi_t^n)$,  $t\in [0,T]$. Then
	\begin{enumerate}
		\item $f^n-\hat f^n\to 0,$ (strongly) in $L^1([0,T]\times\Omega;E).$
		\item $\hat f^n\rightharpoonup f,$ weakly in $L^1([0,T]\times\Omega;E).$
	\end{enumerate}
	with $f_t=\bar f(t,X_t,\pi_t),$  $t\in [0,T]$.
\end{lemma}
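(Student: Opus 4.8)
The plan is to extract both statements from one $L^\beta$-integrability estimate combined with the two convergences in the hypotheses: the pathwise convergence $X^n\to X$ drives (1), while the stable convergence $\mu^n\to\mu$ drives (2). The common preliminary step I would carry out is to observe that, by the growth bound (\ref{cond-f}), Jensen's inequality (here $\beta>1$ and each $\pi^n_t$ is a probability measure) and the uniform bound (\ref{unif-bound-eta-beta}),
\[
\sup_{n}\Exp\int_0^T\Bigl(c_{\rm lin}\abs{X^n_t}^\delta+\bar\vartheta(t,\pi^n_t)\Bigr)^\beta dt\le C\Bigl(\sup_n\norm{X^n}_{L^{\beta\delta}}^{\beta\delta}+\sup_n\Exp\int_{U\times[0,T]}\vartheta^\beta\,d\mu^n\Bigr)<\infty,
\]
and the same with $X$ in place of $X^n$ (the limit measure $\mu$ inheriting the moment bound by lower semicontinuity of $\mu\mapsto\int\vartheta^\beta d\mu$ under stable convergence, $\vartheta$ being inf-compact hence LSC). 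Since $\beta>1$, this makes the families $\{f^n\},\{\hat f^n\}$ and their increments bounded in $L^\beta([0,T]\times\Omega;E)$, hence uniformly integrable and, by Dunford--Pettis, relatively weakly compact in $L^1$.

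For (1) I would set $\Delta^n_t:=\int_U\abs{f(t,X^n_t,u)-f(t,X_t,u)}_E\,\pi^n_t(du)$, so that $\abs{f^n_t-\hat f^n_t}_E\le\Delta^n_t$. Since $f$ is continuous in $x$ uniformly with respect to $u$, for a.e. $(t,\omega)$ one has $\Delta^n_t\le\sup_{u\in U}\abs{f(t,X^n_t,u)-f(t,X_t,u)}_E\to0$ as $n\to\infty$, using $X^n\to X$ $\Prob$-a.s. Because $\Delta^n\le c_{\rm lin}(\abs{X^n}^\delta+\abs{X}^\delta)+2\bar\vartheta(\cdot,\pi^n)$ is bounded in $L^\beta$, the family $\{\Delta^n\}$ is uniformly integrable, and Vitali's theorem gives $\Delta^n\to0$ in $L^1([0,T]\times\Omega)$; assertion (1) follows.

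For (2), by the relative weak compactness above and uniqueness of weak limits, it suffices to prove $\Exp\int_0^T\seq{\hat f^n_t-f_t,\psi_t}\,dt\to0$ for every $\psi\in L^\infty([0,T]\times\Omega;E)$. Fix $M>0$ and truncate $f$ to a bounded, still continuous-in-$u$ map $f^{(M)}$ with $\abs{f^{(M)}}_E\le M$ and $g^{(M)}:=\abs{f-f^{(M)}}_E\le(c_{\rm lin}\abs{x}^\delta+\vartheta)\mathbbm{1}_{\{c_{\rm lin}\abs{x}^\delta+\vartheta>M\}}$. Writing $f=f^{(M)}+(f-f^{(M)})$ in both $\hat f^n$ and $f$, the tail contributions are controlled uniformly in $n$ by Markov's inequality and the estimate above, and tend to $0$ as $M\to\infty$:
\[
\sup_n\Exp\int_0^T\!\!\int_U g^{(M)}(t,X_t,u)\,\pi^n_t(du)\,dt\le\frac{C}{M^{\beta-1}},
\]
(and likewise with $\pi_t$ in place of $\pi^n_t$). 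For the bounded part I fix $M$ and argue pathwise: for $\Prob$-a.e. $\omega$ the map $(t,u)\mapsto\seq{f^{(M)}(t,X_t(\omega),u),\psi_t(\omega)}$ is bounded and continuous in $u$, so the stable convergence $\mu^n(\omega)\to\mu(\omega)$ together with Theorem \ref{thm:contym} yields $\int_0^T\seq{\bar f^{(M)}(t,X_t,\pi^n_t)-\bar f^{(M)}(t,X_t,\pi_t),\psi_t}\,dt\to0$; as this is dominated by $2M\norm{\psi}_{L^\infty}T$, dominated convergence in $\omega$ shows its expectation tends to $0$ for each fixed $M$. Combining the two estimates gives $\limsup_n\abs{\Exp\int_0^T\seq{\hat f^n_t-f_t,\psi_t}dt}\le C\,M^{-(\beta-1)}$ for every $M$, and letting $M\to\infty$ proves (2).

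The step I expect to be the main obstacle is precisely this limit passage in (2): the stable topology only sees bounded continuous integrands, whereas $f(t,X_t,\cdot)$ has $\vartheta$-growth, and the moment bound (\ref{unif-bound-eta-beta}) is available only under $\Exp$, not $\Prob$-a.s. uniformly in $n$, which rules out a direct pathwise Carath\'eodory limit theorem. The truncation at level $M$ is the device that decouples the two difficulties --- the bounded part handled pathwise by stable convergence and then by dominated convergence in $\omega$, the tail absorbed uniformly in $(n,\omega)$ --- and the strict inequality $\beta>1$ is exactly what forces the tail to vanish as $M\to\infty$. A small point to verify en route is that the truncations $f^{(M)}$ stay continuous in $u$, which needs $f$ itself continuous in $u$ (true for the coefficients $b,\sigma$ to which the lemma is applied).
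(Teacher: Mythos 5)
Your proposal is correct. Part (1) is essentially the paper's own argument: pointwise convergence of $\Delta^n$ from the continuity of $f$ in $x$ uniformly in $u$, then uniform integrability from the $L^\beta$-bound (via Jensen and \eqref{unif-bound-eta-beta}, with $\beta>1$), and Vitali's theorem — the paper invokes Lemma 4.11 of Kallenberg to the same effect. Part (2) follows the same two-step strategy as the paper (pass to the limit on a bounded piece via pathwise stable convergence plus dominated convergence in $\omega$; absorb a tail uniformly in $n$ using the $\beta$-moment bound), but your splitting device is genuinely different: you truncate the integrand at height $M$, cutting on $\{c_{\rm lin}|x|^\delta+\vartheta>M\}$ and using $h\mathbf{1}_{\{h>M\}}\le h^\beta M^{1-\beta}$, whereas the paper splits the domain $U\times[0,T]$ into $\calA_\varepsilon=\{\vartheta^{\beta-1}>C_\varepsilon\}$ and its complement, bounding $\Exp[\mu_n(\calA_\varepsilon)]$ by Chebyshev and the integral over $\calA_\varepsilon$ by H\"older. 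Quantitatively the two devices are interchangeable, but yours buys a small gain in rigor: the paper applies Theorem \ref{thm:contym} to the integrand $g\,\mathbf{1}_{\calA_\varepsilon^{\mathrm c}}$, whose $u$-section is only upper semicontinuous (indicator of a closed set, since $\vartheta(t,\cdot)$ is merely LSC), so that theorem does not literally apply there; your truncation $f^{(M)}$ is genuinely bounded and continuous in $u$, so the pathwise stable limit is licit. You are also right to flag that both proofs need continuity of $f$ in $u$, a hypothesis the statement of Lemma \ref{weak-conv-coeff} omits (unlike Lemma \ref{Sigmat-meas}) but which holds for the coefficients $b$ and $\sigma\sigma^\top$ to which it is applied, by Assumption \ref{Assum1}. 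One cosmetic remark: the Dunford--Pettis observation opening your Part (2) is superfluous, since weak convergence in $L^1$ is by definition convergence of the pairings against all $\psi\in L^\infty$; and, as you note, transferring the moment bound to the limit measure $\mu$ requires Lemma \ref{Lem:lscym} plus Fatou in $\omega$, exactly as in the paper.
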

 \begin{proof} We first prove
	\begin{equation}\label{tfnhatfn}
		f^n-\hat f^n\to 0, \ \ \ \ \mbox{(strongly) in} \ L^1([0,T]\times\Omega;E).
	\end{equation}
	By uniform continuity with respect to $u\in U$, for each $n\in\mathds N$ we have
	\begin{eqnarray*}
		I^n_t=\int_U\bigl|f(t,X^n_t,u)-f(t,X_t,u)\bigr|\,\pi^n_t(du)\le\sup_{u\in U}\bigr|f(t,X^n_t,u)-f(t,X_t,u)\bigr|\to 0
	\end{eqnarray*}
	as $n\to\infty$ for $t\in[0,T]$,  $\Prob-$a.s. From (\ref{cond-f}) and (\ref{unif-bound-eta-beta}), we get
	\[
	\sup_{n\in\mathds{N}}\e\int_0^T\abs{I^n_t}^{\beta}\,dt<+\infty.
	\]
	Hence, $\{I^n\}_{n\in\mathds{N}}$ is uniformly integrable on $\Omega\times [0,T]$. Lemma 4.11  \cite{kallenberg} implies that
	\[
	\e\int_0^T\bigl|f^n_t-\hat f^n_t\bigr|\,dt\le\e\int_0^T\!I^n_t\,dt\to 0, \ \ \ \mbox{ as } \ n\to\infty,
	\]
	and (\ref{tfnhatfn}) follows. Now, we will prove that
	\begin{equation}\label{hatfntf}
		\hat f^n\rightharpoonup f, \ \ \ \mbox{ weakly in } \ L^1([0,T]\times\Omega; E).
	\end{equation}
	Let $\psi\in L^{\infty}([0,T]\times\Omega;E)$ be fixed. We denote $g(t,u)=\bigl\langle f(t,X_t,u),\psi_t\bigr\rangle.$ Then, 
\[
\e\int_0^T\bigl\langle \hat f^n_t,\psi_t\bigr\rangle\,dt=\e\int_0^T\seq{\int_U f(t,X_t,u)\,\pi^n_t(du),\psi_t}\,dt=\e\int_{U\times[0,T]} g(t,u)\,\mu^n(du,dt)
\]	for each $n\in\mathds{N}$. Let $\varepsilon\in(0,1)$ be fixed and take $C_\varepsilon>\max\{\frac{R}{\varepsilon},1\}$ with $R$ defined as the supremum in (\ref{unif-bound-eta-beta}), and let $\calA_{\varepsilon}=\set{(t,u)\in[0,T]\times U:\vartheta(t,u)^{\beta-1}>C_\varepsilon}.$ Then, for this choice of $C_\varepsilon$, we have
	\begin{eqnarray*}
		\e\left[\mu_n\bigl(\calA_{\varepsilon}\bigr)\right]
		=\e\int_{\calA_{\varepsilon}}\mu_n(du,dt)\le\frac{1}{C_\varepsilon}\e\int_{\calA_{\varepsilon}}\vartheta(t,u)^{\beta-1}\,\mu_n(du,dt)<\varepsilon.
	\end{eqnarray*}
We write
\[
	\e\int_{[0,T]\times U} g(t,u)\,\mu_n(du,dt)=\e\int_{\calA_{\varepsilon}^{\mathrm c}}g(t,u)\,\mu_n(du,dt)+\e\int_{\calA_{\varepsilon}}g(t,u)\,\mu_n(du,dt)
\]
	and observe first that by Theorem \ref{thm:contym} we have $\Prob-$a.s.
\[
\int_{\calA_{\varepsilon}^{\mathrm c}}g(t,u)\,\mu_n(du,dt) \to \int_{\calA_{\varepsilon}^{\mathrm c}}g(t,u)\,\mu(du,dt)
\]		
	as $n\to\infty$ and, by (\ref{cond-f}), 
\[
\int_{\calA_{\varepsilon}^{\mathrm c}}g(t,u)\,\mu_n(du,dt) \le \left[c_{\rm lin}\abs{X}^\delta_{L^1(0,T;\mathbb{R}^d)}+C_\varepsilon^{1/(\beta-1)}\right]\abs{\psi}_{L^{\infty}(0,T;E)}, \ \ \Prob-\mbox{a.s.}
\]
The right side of the last inequality has finite expectation by the hypothesis about $\psi$ and the Cauchy-Schwarz' inequality. Thus, using Lebesgue's dominated convergence theorem we get
\[	\Exp\int_{\calA_{\varepsilon}^{\mathrm c}}g(t,u)\,\mu_n(du,dt) \to \Exp\int_{\calA_{\varepsilon}^{\mathrm c}}g(t,u)\,\mu(du,dt)
\]
as $n\to\infty$. Now, for each $n\in\mathds{N}$,  define the measure $\kappa_n(du,dt,d\omega)=\mu_n(\omega)(du,dt)\Prob(d\omega)$ on $\B(U)\otimes\B([0,T])\otimes\Fil,$ 
	so we have
\[
\e \int_{\calA_{\varepsilon}}\Bigl|g(t,u)\Bigr|\,\mu_n(du,dt)\le\int_{\Omega\times\calA_{\varepsilon}}\varphi(t)\,\kappa_n(du,dt,d\omega)+\int_{\Omega\times\calA_{\varepsilon}}\vartheta(t,u)\abs{\psi_t}_{E}\,\kappa_n(du,dt,d\omega)
\]
with $\varphi=c_{\rm lin}\bigl|X\bigr|^{\delta}\abs{\psi}_{E}\in L^{\beta}([0,T]\times\Omega),$  since $\bigl|X\bigr|^\delta\in L^\beta([0,T]\times\Omega)$ and $\psi \in L^\infty([0,T]\times\Omega)$. Using  H\"{o}lder's inequality we get
	\begin{align*}
\int_{\Omega\times\calA_{\varepsilon}}\varphi(t)\,\kappa_n(du,dt,d\omega)
		\le &\left[\int_{\Omega\times[0,T]\times U}\varphi(t)^\beta\,\kappa_n(du,dt,d\omega)\right]^{1/\beta}
		\cdot\left(\e\left[\mu_n\bigl(\calA_{\varepsilon}\bigr)\right]\right)^{1-1/\beta}\\
		<&\norm{\varphi}_{L^\beta([0,T]\times\Omega)}\varepsilon^{1-1/\beta}
	\end{align*}
	and
	\begin{align*}		\int_{\Omega\times\calA_{\varepsilon}}\vartheta(t,u)\abs{\psi(t)}_{E}\,\kappa_n(du,dt,d\omega)\le&\norm{\psi}_{L^\infty([0,T]\times\Omega;E)}
		\int_{\Omega\times\calA_\varepsilon}\vartheta(t,u)\,\kappa_n(du,dt,d\omega)\\
		=&\norm{\psi}_{L^\infty([0,T]\times\Omega;E)}
		\e\int_{\calA_\varepsilon}\frac{\vartheta(t,u)^\beta}{\vartheta(t,u)^{\beta-1}}\,\mu_n(du,dt)\\
		\le&\norm{\psi}_{L^\infty([0,T]\times\Omega;E)}
		\frac{1}{C_\varepsilon}\e\int_{\calA_\varepsilon}\vartheta(t,u)^\beta\,\mu_n(du,dt)\\
		\le& \norm{\psi}_{L^\infty([0,T]\times\Omega;E)}\frac{R}{C_\varepsilon}\\
		<&\norm{\psi}_{L^\infty([0,T]\times\Omega;E)}\varepsilon
	\end{align*}
	which holds uniformly with respect to $n\in\mathds{N}$. Since $\vartheta(t,\cdot)$ is lower semi-continuous for all $t\in [0,T]$,  by Lemma \ref{Lem:lscym} and Fatou's lemma we have
	\[	\e\int_{U\times[0,T]}\vartheta(t,u)^\beta\,\mu(du,dt)\le \liminf_{n\to \infty} \e\int_{U\times[0,T]}\vartheta(t,u)^\beta\,\mu_n(du,dt)\le R.
 \]
	Therefore, the same estimates hold for $\mu$,  that is,
	\begin{eqnarray*}
		\e \int_{\calA_\varepsilon}\bigl|g(t,u)\bigr|\,\mu(du,dt) \le \norm{\varphi}_{L^\beta([0,T]\times\Omega)}\varepsilon^{1-1/\beta}+\norm{\psi}_{L^\infty([0,T]\times\Omega;E)}\varepsilon
	\end{eqnarray*}
	and since $\varepsilon\in(0,1)$ is arbitrary, we conclude that
	\[
	\e\int_{U\times[0,T]}g(t,u)\,\mu_n(du,dt)\to \e\int_{U\times[0,T]}g(t,u)\,\mu(du,dt)
	\]
	as $n\to\infty$, and (\ref{hatfntf}) follows. \end{proof}

\subsection{Proof of Theorem \ref{thm-main}}
Let $\Theta^n=\left(\Omega^n,\Fil^n,\Prob^n,W^n,\pi^n,X^n\right),$ $n\in\mathds{N}$ be a minimizing sequence of weak admissible relaxed controls, that is,
\[
\lim_{n\to\infty}\bar{\mathcal{J}}(\Theta^n)=\inf_{\theta\in\bar{\U}(x_0,T)}\bar {\mathcal{J}}({\Theta}).
\]
From this and Assumption \ref{Assum2} it follows that there exists $R>0$ such that for all $n\in\mathds{N}$
\begin{equation}
\Exp^n\int_{U\times[0,T]}\vartheta_2(t,u)^{p}\,\pi_t^n(du)\,dt\leq C_1+C_2\Exp^n\int_{U\times[0,T]} l(t,X_n(t),u)\,\pi_t^n(du)\,dt\leq R\label{estq}
\end{equation}
where $\Exp^n$ denotes expectation with respect to $\Prob^n.$ We will divide the proof in several steps.

\noindent\textsc{Step 1.} Define $m$ by $\frac{1}{m}=\frac{1}{p}+\frac{1}{r}$ and let $\alpha \in \bigl[0,\frac{\gamma}{2}-\frac{1}{m} \bigr)$ be fixed. By Corollary \ref{S-H} and \eqref{estq} the processes $X^n$ admit versions, which we also denote with $X^n,$ with paths in $\mathcal C^{\alpha}(0,T;\R^d)$ satisfying
\[
\sup_{n\in\mathds N}\Exp^n\left[\abs{X^n-x_0}_{\mathcal C^{\alpha}(0,T;\R^d)}^m\right]<\infty.
\]
Since $C^{\alpha}(0,T;\R^d)$ is compactly embedded in $\mathcal C(0,T;\R^d),$ by Chebyshev's inequality it follows that the family of laws of $\left\{X^n\right\}_{n\in\mathds N}$ is tight in $\mathcal C(0,T;\R^d).$
Using Lemma \ref{equiv-Ev}, for each $n\in \mathds{N}$ the process $X^n$ satisfies
\begin{equation*}
\int _0^tX^n_s\,ds=\int_0^t x_0(s) ds + \int_0^t K(t-s)\left( \zeta^n_s+ Y^n_s\right)ds, \ \ \ t\in[0,T],
\end{equation*}
where
\[
\zeta^n_t=\int_0^t\bar{b}(s,X^n_s,\pi^n_s)\,ds  \quad \mbox{and} \quad Y^n_t=\int_0^t\bar\sigma(s,X^n_s,\pi^n_s)\ dW^n_s.
\]
A similar argument as in the proof of Theorem \ref{a-priori} and Corollary \ref{S-H} with $K$ replaced by the identity matrix of size $d$ ensures that $\left\{\zeta^n\right\}_{n\in \mathds{N}}$ and $\left\{Y^n\right\}_{n\in \mathds{N}}$ are also tight in $C([0,T],\R^{d}).$ For each $n\in \mathds N$ we define the random Young measure
\begin{equation}
    \label{Ymesu}
    \mu_n(du,dt)=\pi_t^n(du)\,dt.
\end{equation}
We also claim that the family of laws of $\left\{\mu_n \right\}_{n\in\mathbb N}$ is tight in $\Y(0,T;U).$ Indeed, for each $\varepsilon>0$ define the set
\[
K_\varepsilon=\left\{\mu\in\Y(0,T;U):\int_{U\times[0,T]}\vartheta_2(t,u)^p\,\mu(du,dt)\le\frac{R}{\varepsilon}\right\}.
\]
By Theorems \ref{flexiblytight} and \ref{prohym}, $K_\varepsilon$ is relatively compact in the stable topology of $\Y(0,T;U)$, and by Chebyshev's inequality we have
\[
\Prob^n\left(\mu_n\in\Y(0,T;U)\setminus\bar{K_\varepsilon}\right)\le \Prob^n\left(\mu_n\in\Y(0,T;U)\setminus K_\varepsilon\right)\le \frac{\varepsilon}{R}\, \Exp^{n}\int_{U\times[0,T]}\vartheta_2^p(t,u)\,\mu_n(du,dt)
\le\varepsilon
\]
and the tightness of the laws of $\{\mu_n\}_{n\in\mathds N}$ follows. We use now Prohorov's theorem to ensure existence of a probability measure $\Psi$ on $C\left([0,T],\R^{d} \right)^3\times\Y(0,T;U)$ and a subsequence of $\left\{X^n,\zeta^n,Y^n, \mu_n \right\}_{n\in\mathbb N},$ which we denote using the same index $n\in \mathbb N,$ such that
\begin{equation}
    \label{SubsPr}
    \mathrm{law}\left(X^n,\zeta^n,Y^n,\mu_n \right) \rightarrow \Psi, \quad n\rightarrow \infty.
\end{equation}
\noindent\textsc{Step 2.} Dudley's generalization of the Skorohod representation theorem (see Theorem 4.30 in  \cite{kallenberg}) ensures existence of a probability space $(\tOmega,\tFil,\tProb)$ and a sequence of random variables $\{\tX^n,\tilde\zeta^n,\tilde Y^n,\tilde{\mu}_n\}_{n\in\mathds{N}}$ with values in $\mathcal{C}([0,T];\R^{d})^3\times\Y(0,T;U)$,  defined on $(\tOmega,\tFil,\tProb)$,  such that
\begin{equation}\label{eqlawsn}
(\tX^n,\tv^n,\tilde Y^n,\tilde {\mu}_n)\stackrel{d}{=}(X^n,\zeta^n,Y^n,\mu_n) , \quad n\in\mathds{N},
\end{equation}
and, on the same stochastic basis $(\tOmega,\tFil,\tProb)$,  a random variable $\bigl(\tX,\tv,\tilde Y,\tilde{\mu}\bigr)$ with values in $\mathcal{C}([0,T];\R^d)^3\times\Y(0,T;U)$ such that
\begin{equation}\label{tXn}
(\tX_n,\tv_n,\tilde Y_n)\to(\tX,\tv,\tilde Y), \ \ \ \mbox{in } \ \mathcal{C}([0,T];\mathbb {R}^d)^3, \ \ \tProb-\mbox{a.s.}
\end{equation}
and
\begin{equation}\label{tlambdan}
\tilde{\mu}_n\to \tilde{\mu}, \ \ \ \mbox{stably in } \ \Y(0,T;U), \ \ \tProb-\mbox{a.s.}
\end{equation}
\textsc{Step 3.} For each $t\in[0,T]$ let $\varphi_t$ denote the evaluation map $\mathcal C([0,T];\R^d)\ni\zeta\mapsto \zeta(t)\in\R^d,$ and let $\Gamma_t: C([0,T];\R^d)^2\times \Y^{p}(0,T;U)\rightarrow \R^d $ be defined as
\[
\Gamma_t(x,\zeta,\mu)=\Sigma_t(x,\mu)-\varphi_t(\zeta), \ (x,\zeta)\in C([0,T];\R^d)^2, \ \mu \in \Y^{p}(0,T;U)
\]
with $\Sigma_t$ as in (\ref{Sigmat}) with $f=b.$ Using Lemma \ref{Sigmat-meas} with $\vartheta=\vartheta_1,$ it follows that $\Gamma_t$ is measurable. Hence by (\ref{eqlawsn}) and the definition of $\zeta^n$, for each $t\in[0,T]$ and $n\in\mathds N$ we have
\begin{equation*}
\tilde\zeta^n_t=\int_{U\times[0,t]} b(s,\tilde X_s^n,u)\,\tilde\mu^n(du,ds)
\end{equation*}
By Theorem 6.1 in  \cite{gripenberg1990volterra}, the map $Z\mapsto \int_0^t K(t-s)Z_s\,ds$ is continuous from $\mathcal C(0,T;\R^d)$ to itself. In particular, it is measurable, so we also have
\begin{equation}\label{mart2}
\int _0^t \tilde X^n_s\,ds=\int_0^t x_0(s)\,ds + \int_0^t K(t-s)\left[\tilde\zeta^n_s+ \tilde Y^n_s\right]\,ds.
\end{equation}
Since $U$ is a Suslin space, it also separable and Radon, see e.g. Ch. II in \cite{schwartz}. In particular, Lemma \ref{Lem:disrym} applies, so there exists a relaxed control process $(\tilde{\pi}_t^n)_{t\in [0,T]}$ defined on $(\tOmega,\tFil,\tProb)$ such that
\begin{equation*}
\tilde{\mu}^n(du,dt)=\tilde{\pi}_t^n(du)\,dt, \ \ \ \tProb-\mbox{a.s.}
\end{equation*}
Now,  $Y^n$ is a $\Fil^n$-martingale with quadratic variation
\[
\seq{Y^n}_t=\int_0^t(\bar\sigma\bar\sigma^\top)(s,X^n_s,\pi_s)\,ds, \ \ t\in[0,T]
\]
and $(X_n,\pi^n)\stackrel{d}{=}(\tX_n,\tilde{\pi}^n).$ Then, using once again Lemma \ref{Sigmat-meas}, now with $f=\sigma\sigma^\top$ and $\vartheta=\vartheta_1^2,$ it follows that $\tilde Y^n$ is also a martingale with respect to the filtration
\[
 \tilde{\mathcal F}^n_t =\sigma\left\{ (\tX^{n}_s,\tilde{\pi}^n_s): s\in[0,t]\right\}, \ \ t\in[0,T]
 \]
and  quadratic variation $\bigl\langle\tilde Y^n\bigr\rangle_t=\int_0^t(\bar\sigma\bar\sigma^\top)(s,\tilde X^n_s,\tilde \pi^n_s)\,ds.$
Again, using continuity of the map  $Z\mapsto \int_0^t K(t-s)Z_s\,ds$ from $\mathcal C(0,T;\R^d)$ to itself, we obtain
\begin{equation*}
\int _0^t \tilde X_s\, ds=\int_0^t x_0(s)\, ds + \int_0^t K(t-s)\left[\tilde\zeta_s+ \tilde Y_s\right]\,ds.
\end{equation*}
We use Lemma \ref{Lem:disrym} one last time to ensure the existence of a relaxed control process $(\tilde{\pi}_t)_{t\in[0,T]}$ 
defined on $(\tOmega,\tFil,\tProb)$ such that
\begin{equation}\label{tqLG}
\tilde{\mu}(du,dt)=\tilde{\pi}_t(du)\,dt, \ \ \ \tProb-\mbox{a.s.}
\end{equation}
The filtration $\tilde{\mathds{F}}=\bigl\{\tFil_t\bigr\}_{t\in [0,T]}$ is defined by
\[
\tFil_t=\sigma\{(\tX_s,\tilde{\pi}_s):s\in[0,t]\}, \ \ \ \ t\in [0,T].
\]
We now claim that $\tilde Y$ is a $\tilde{\mathds{F}}$-martingale.
Indeed, 
From (\ref{tXn}) we have
\begin{equation}\label{tXn4}
\sup_{\, t\in[0,T]}\bigl|\tX^n_t-\tX_t\bigr|^2\to 0, \ \ \mbox{ as } n\to \infty, \ \ \tProb-\mbox{a.s.}
\end{equation}
 By Theorem \ref{a-priori}, Corollary \ref{S-H} and (\ref{eqlawsn}),  it follows that
\begin{equation}\label{tXest}
\tExp\biggl[\sup_{\, t\in [0,T]}\bigl|\tX_t\bigr|^{m}\biggr]<\infty.
\end{equation}
Also by Theorem \ref{a-priori},  and Chebyshev's inequality, the random variables in (\ref{tXn4}) are uniformly integrable. Then, by  Lemma 4.11 in  \cite{kallenberg} we have
\begin{equation}\label{tXnconv}
\tExp\biggl[\sup_{\, t\in[0,T]}\bigl|\tX^n_t-\tX_t\bigr|^2\biggr]\to 0, \ \ \mbox{ as } n\to \infty.
\end{equation}
Similarly, we have
\begin{equation}\label{tMnconv2}
\tExp\biggl[\sup_{\, t\in[0,T]}\bigl|\tilde Y^n_t-\tilde Y_t\bigr|^2\biggr]\to 0, \ \ \mbox{ as } n\to \infty.
\end{equation}
This, in conjunction with the martingale property of $\tilde Y^n,$ implies that for all $0<s<t\le T$ and for all
\[
\phi\in \mathcal{C}_b\left(\mathcal{C}(0,s;\mathbb{R}^{d})\times\Y(0,s;U)\right)
\]
we have that as $n\to\infty$
\[
0=\tExp\left[\bigl(\tilde{Y}^n_t-\tilde{Y}^n_s\bigr)\phi(\tX^n,\tilde{\mu}^n)\right]\to \tExp\left[\bigl(\tilde{Y}_t-\tilde{Y}_s\bigr)\phi(\tX,\tilde{\mu})\right],
\]  which implies that $\tilde Y$ is a $\tilde{\mathds{F}}-$martingale. 


\noindent{\sc Step 4.} We now pass to the limit to identify the  process $(\tX_t)_{t\in [0,T]}$ as a solution of the equation controlled by $(\tilde{\pi}_t)_{t\in [0,T]}$. Using Lemma \ref{weak-conv-coeff} with $E=\R^d$, $f=b,$ $\beta=p>1,$ $\delta=1$, and $\vartheta=\vartheta_1$  we obtain
\begin{equation}\label{tbntb}
\tb^n\rightharpoonup \tb, \ \ \ \mbox{ weakly in } \ L^1([0,T]\times\tOmega;\mathbb{R}^{d})
\end{equation}
with $\tilde b_t=\bar b(t,\tilde X_t,\tilde \pi_t), \ t\in[0,T].$ We claim that the process $\tilde{Y}$ satisfies
\begin{align}\label{tM2LG}
\int _0^t \tilde X_s ds=\int_0^t x_0(s) ds + \int_0^t K(t-s)\left( \int_0^s \tilde b_\tau\,d\tau + \tilde Y_s\right)\,ds.
\end{align}
By (\ref{tXnconv}) and (\ref{tMnconv2}), for any $\varepsilon>0$ there exists an integer $\bar{m}=\bar{m}(\varepsilon)\geq 1$ for which
\begin{equation}\label{convXM1}
\tExp\biggl[\sup_{\, t\in [0,T]}\bigl|\tX^n_t-\tX_t\bigr|+\bigl|\tilde{Y}^n_t-\tilde{Y}_t\bigr|\biggr]<\varepsilon, \ \ \ \forall n\geq\bar m.
\end{equation}
From (\ref{tbntb}) we have
\[
\tb\in\overline{\{\tb^{\bar{m}},\tb^{\bar{m}+1},\ldots\}}^{w}
\subset \overline{{\rm co}\{\tb^{\bar{m}},\tb^{\bar{m}+1},\ldots\}}^{w}
\]
where ${\rm co}(\cdot)$ and $\overline{\,\cdot\,}^{w}$ denote the convex hull and weak-closure in $L^1([0,T]\times\tOmega;\mathbb{R}^d)$ respectively. By Mazur's, see for example Theorem 2.5.16 in   \cite{megginson2008introduction}
\[
\overline{{\rm co}\{\tb^{\bar{m}},\tb^{\bar{m}+1},\ldots\}}^{w}
=\overline{{\rm co}\{\tb^{\bar{m}},\tb^{\bar{m}+1},\ldots\}}.
\]
Therefore, there exist an integer $\bar{N}\geq 1$ and $\{\alpha_1,\ldots,\alpha_{\bar{N}}\}$ with $\alpha_i\geq 0$,  $\sum_{i=1}^{\bar{N}}\alpha_i=1$,  such that
\begin{equation}\label{alphaif1}
\Bigl|\!\Bigl|\sum_{i=1}^{\bar{N}}\alpha_i\tb^{\bar m+i}-\tb\Bigr|\!\Bigr|_{L^1([0,T]\times\tOmega;\mathbb{R}^d)}<\varepsilon.
\end{equation}
Let $t\in [0,T]$ be fixed. Using the $\alpha_i$'s and (\ref{mart2}) we can write
\begin{align*}
\int_0^t x_0(s)ds=\sum_{i=1}^{\bar{N}}\alpha_i\left\{ \int_0^t \tX_s^{\bar{m}+i} ds - \int_0^tK(t-s)\left( \int_0^s \tb_v^{\bar{m}+i}\ dv+\tilde Y_s^{\bar{m}+i} \right) ds \right\}.
\end{align*}
Thus, we have
\begin{align*}
I&=\Bigl| \int_0^t x_0(s) ds+\int_0^tK(t-s)\left( \int_0^s \tb_v\, dv+\tilde Y_s \right) ds- \int_0^t\tX_s ds \Bigr|\\
&=\Bigl| \sum_{i=1}^{\bar{N}}\alpha_i\left\{ \int_0^t \tX_s^{\bar{m}+i} ds - \int_0^tK(t-s)\left( \int_0^s \tb_v^{\bar{m}+i}\ dv+\tilde Y_s^{\bar{m}+i} \right) ds \right\} \\
 &+\int_0^tK(t-s)\left( \int_0^s \tb_v\ dv+\tilde Y_s\right) ds- \int_0^t\tX_s ds  \Bigr|\\
&\le \Bigl| \sum_{i=1}^{\bar{N}}\alpha^i \int_0^t  \tX^{\bar m+i}_s ds - \int_0^t  \tX_s ds   \Bigr|+\Bigl|\sum_{i=1}^{\bar{N}}\alpha^i \int_0^t K(t-s)\tilde Y_s^{\bar{m}+i} ds -\int_0^t K(t-s)\tilde Y ds \Bigr| \\
 &+\,\Bigl|   \sum_{i=1}^{\bar{N}}\alpha^i\int_0^t K(t-s) \int_0^s\tb^{\bar m+i}_v\, dv\,ds -\int_0^tK(t-s) \int_0^s\tb_v\, dv\,ds\Bigr|=II+III+IV.
\end{align*}
Then, by  (\ref{convXM1}), we have
\begin{align*}
\tExp (II) &= \tExp\left[ \Bigl| \sum_{i=1}^{\bar{N}}\alpha^i \int_0^t  \tX^{\bar m+i}_s ds - \int_0^t  \tX_s ds \Bigr|\right]\le  \sum_{i=1}^{\bar{N}}\alpha^i\tExp\left[\bigl| \int_0^t  (\tX^{\bar m+i}_s  - \tX_s ) ds     \bigr|\right] \\
& \le \sum_{i=1}^{\bar{N}}\alpha^i\tExp\left[\int_0^t \sup_{s\in[0,T]}\bigl|\tX^{\bar m+i}_s-\tX_s \bigr| \, ds  \right] \le \varepsilon T.
\end{align*}
By  Fubini's theorem and (\ref{convXM1}), it follows
\begin{align*}
\tExp (III) &= \tExp\left[ \Bigl| \sum_{i=1}^{\bar{N}}\alpha^i  \int_0^t K(t-s)\tilde Y_s^{\bar{m}+i}\, ds -\int_0^t K(t-s)\tilde Y_s\,ds \Bigr|\right]\\
&\le  \sum_{i=1}^{\bar{N}}\alpha^i\tExp\left[\bigl| \int_0^t K(t-s) \left(\tilde Y_s^{\bar{m}+i}-\tilde Y_s\right)\,ds     \bigr|\right] \\
&\le  \sum_{i=1}^{\bar{N}}\alpha^i\tExp\left[\bigl| \int_0^t K(t-s) \left(\tilde Y_s^{\bar{m}+i}-\tilde Y_s\right)\, ds    \bigr|\right] \le \varepsilon \norm{K}_{L^1(0,T)}.
\end{align*}
Using twice Jensen's inequality and (\ref{alphaif1}),
\begin{align*}
\tExp (IV) &= \tExp\left[ \Bigl| \sum_{i=1}^{\bar{N}}\alpha^i \int_0^t K(t-s) \int_0^s\tb^{\bar m+i}_v\ dv\,ds -\int_0^tK(t-s) \int_0^s\tb_v\ dv\,ds \Bigr|\right]\\
&= \tExp\left[ \Bigl|  \int_0^t K(t-s) \int_0^s \Bigl(\sum_{i=1}^{\bar{N}}\alpha^i\tb^{\bar m+i}_v -\tb_v\Bigr)\,dv \,ds \Bigr|\right]\\
&\le  \tExp\left[t \int_0^t \abs{K(t-s)}\int_0^s  \Bigl|\sum_{i=1}^{\bar{N}}\alpha^i\tb^{\bar m+i}_v -\tb_v \Bigr|\,dv\,ds    \right]  \le \varepsilon T \norm{K}_{L^1(0,T)}.
\end{align*}
Then, $\tExp(I)\le [T+\norm{K}_{L^1(0,T)}(T+1)] \varepsilon.$ Since $\varepsilon>0$ is arbitrary, (\ref{tM2LG}) follows.

\noindent\textsc{Step 5.} Set $\tilde\sigma_t=\bar\sigma(t,\tX_t,\tilde{\pi}_t)$ and $\tilde\sigma^n_t=\bar\sigma(t,\tX^n_t,\tilde{\pi}^n_t)$ ,  $t\in [0,T]$. Using Lemma \ref{weak-conv-coeff} with $E=\mathbb{S}^{d}$, $f=\sigma\sigma^\top, \ \delta=2, \ \vartheta=\vartheta_1^{2}$ and $\beta= p/2>1,$  we obtain
\begin{equation*}
	\tilde\sigma^n\tilde\sigma^{n,\top}\rightharpoonup \tilde\sigma\tilde\sigma^\top, \ \ \ \mbox{ weakly in } \ L^1([0,T]\times\tOmega;\mathbb{S}^{d}).
\end{equation*}
Let $t\in[0,T]$ be fixed. By \eqref{tMnconv2} and the Burkholder-Davis-Gundy inequality we have $\bigl\langle\tilde Y^n\bigr\rangle_t\to\bigl\langle\tilde Y\bigr\rangle_t$ in $L^2(\tilde\Omega).$ Then, for any $\varepsilon>0$ there exists an integer $\bar{m}=\bar{m}(\varepsilon)\geq 1$ such that
\begin{equation*}
	\tExp\biggl[\bigl|\bigl\langle\tilde Y^n\bigr\rangle_t-\bigl\langle\tilde Y\bigr\rangle_t|\biggr]<\varepsilon, \ \ \ \forall n\geq\bar m.
\end{equation*}
As in the proof of Step 5, there also exists an integer $\bar{N}\geq 1$ and $\{\alpha_1,\ldots,\alpha_{\bar{N}}\}$ with $\alpha_i\geq 0$,  $\sum_{i=1}^{\bar{N}}\alpha_i=1$, such that
\begin{equation*}
	\Bigl|\!\Bigl|\sum_{i=1}^{\bar{N}}\alpha_i\tilde\sigma^{\bar m+i}\tilde\sigma^{(\bar m+i),\top}-\tilde\sigma\tilde\sigma^\top\Bigr|\!\Bigr|_{L^1([0,T]\times\tOmega;\mathbb{S}^m)}<\varepsilon.
\end{equation*}
Thus, we have
\begin{align*}
	&\Bigl|\bigl\langle\tilde Y\bigr\rangle_t-\int_0^t\tilde\sigma_s\tilde\sigma_s^\top\,ds\Bigr|\\
	&=\Bigl|\bigl\langle\tilde Y\bigr\rangle_t-\sum_{i=1}^{\bar{N}}\alpha^i\bigl\langle\tilde Y^{\bar m+i}\bigr\rangle_t
	+\sum_{i=1}^{\bar{N}}\alpha^i\int_0^t\tilde\sigma^{\bar m+i}_s\tilde\sigma_s^{(\bar m+i),\top}\,ds-\int_0^t\tilde\sigma_s\tilde\sigma_s^\top\,ds\Bigr|\\
	&\le \Bigl|\bigl\langle\tilde Y\bigr\rangle_t-\sum_{i=1}^{\bar{N}}\alpha^i\bigl\langle\tilde Y^{\bar m+i}\bigr\rangle_t\Bigr|
	+\Bigl|\sum_{i=1}^{\bar{N}}\alpha^i \int_0^t\tilde\sigma^{\bar m+i}_s\tilde\sigma_s^{(\bar m+i),\top}\,ds-\int_0^t\tilde\sigma_s\tilde\sigma_s^\top\,ds
	\Bigr|.
\end{align*}
As in Step 4, we have
\[
\tExp\biggl[\Bigl|\bigl\langle\tilde Y\bigr\rangle_t-\int_0^t\tilde\sigma_s\tilde\sigma_s^\top\,ds\Bigr|\biggr]<(1+T)\varepsilon.
\]
Since $\varepsilon>0$ and $t\in[0,T]$ are arbitrary, it follows $\bigl\langle\tilde Y\bigr\rangle_t=\int_0^t\tilde\sigma_s\tilde\sigma_s^\top\,ds \ \tProb$-a.s. for all $t\in[0,T].$ By the martingale representation theorem (see e.g. Theorem  4.2 in Chapter 3.4 \cite{karatzas2012brownian}) there exist an extension of the probability space $(\tOmega,\tFil,\tProb)$,  which we also denote $(\tOmega,\tFil,\tProb)$,  and a $d'$-dimensional Brownian motion $(\tilde{W}_t)_{t\ge 0}$ defined on $(\tOmega,\tFil,\tProb)$,  such that
\[
\tilde{Y}_t=\int_0^t \bar\sigma(s,\tX_s,\tilde{\pi}_s)\,d\tilde {W}_s, \ \ \ \tProb-\mbox{a.s.}, \ \ \ t\in [0,T],
\]
By (\ref{tM2LG}), it follows that
\[
\int _0^t \tilde X_s ds=\int_0^t x_0(s) ds + \int_0^t K(t-s)\left( \int_0^s \tilde b_v \,dv + \tilde Y_s\right)ds,\
\tProb-\mbox{a.s.}
\]
for each $t\in [0,T]$. By Lemma \ref{equiv-Ev} this is equivalent to $\tX$ being a solution to the stochastic Volterra equation controlled by $\tilde{\pi}.$ In other words, $\tilde\Theta=\left(\tilde\Omega,\tilde{\mathcal F}, \tilde\Prob,\tilde{\mathds F}, \tilde W,\tilde X,\tilde \pi\right)$
is a weak admissible relaxed control. By the Fiber Product Lemma \ref{Lem:fibpro} we have
\[
\underline{\delta}_{\tX^n}\otimes\mu_n\to \underline{\delta}_{\tX}\otimes\mu, \ \mbox{ stably in } \ \Y(0,T;\R\times U), \ \ \tProb-\mbox{a.s.}
\]
Since $\mathbb{R}\times U$ is also a metrisable Suslin space, using Lemma \ref{Lem:lscym} and Fatou's Lemma we get
\[
\tExp\int_{U\times[0,T]} l(t,\tX_t,u)\,\tilde\mu(du,dt)
\le\liminf_{n\to\infty}\tExp\int_{U\times[0,T]} l(t,\tX^n_t,u)\,\tilde \mu_n(du,dt)\]
and since $(\tX^n,\tilde{\mu}^n)\stackrel{d}{=}(X^n,\mu^n)$ it follows that
\begin{eqnarray*}
\bar{\mathcal{J}}(\tilde\pi)&=&\tExp\int_{U\times[0,T]} l(t,\tX_t,u)\,\tilde{\mu}(du,dt)+\tExp G(\tX_T)\\
&\le&\liminf_{n\to\infty}\Exp^n\int_{U\times[0,T]}
l(t,X^n_t,u)\,\mu_n(du,dt)+\liminf_{n\to\infty}\Exp^n G(X^n_T)\\
&\le& \liminf_{n\to\infty}\left[\Exp^n\int_{U\times[0,T]} l(t,X^n_t,u)\,\mu_n(du,dt)+\Exp^nG(X^n_T)\right]=\inf_{\Theta\in\mathcal{U}(x_0)}\bar{\mathcal{J}}(\Theta),
\end{eqnarray*}
that is, $\tilde\Theta$ is a weak optimal relaxed control for \textbf{(RCP)}, and this concludes the proof of Theorem \ref{thm-main}.

\subsection{Proof of Theorem \ref{thm-strict}}
\begin{proof} Let $\Theta=\left(\Omega,\mathcal F, \mathbf P,\mathds F,X,W,\pi\right)\in\bar{\U}(x_0,T).$ Define $\mathcal{E}:[0,T]\times\Omega\to\R^{d\times d}\times\R^d\times\R$ as
\[
\mathcal{E} (t, \omega):=(\bar\sigma\bar\sigma^\top,\bar{b},\bar{h})(t,X_t(\omega),\pi_t(\omega)).
\]
By Assumption \ref{A3}, we have that   $\mathcal{E}(t, \omega) \in \Gamma(t,X_t(\omega)),$ as defined in \eqref{eq:Gamma}, for all $(t,\omega)\in [0,T]\times\Omega$. We also define  
\[
c^1(t,\omega):=(\bar\sigma\bar\sigma^\top,\bar{b})(t,X_t(\omega),\pi_t(\omega)),\quad c^2(t,\omega):=\bar{l}(t,X_t(\omega),\pi_t(\omega)).
\] 
By Lemma \ref{Lem:disrym}, $c^1$ and $c^2$ are measurable with respect to the predictable $\sigma$-algebra $\mathcal G$ on $Y=[0,T]\times\Omega.$  
Using Theorem \ref{Filipov}  we conclude the existence of a function $u:[0,T]\times\Omega\to U$ measurable with respect to $\mathcal G$ such that
\begin{equation}\label{eq:measurable_selection}
c^1(t,\omega)=(\sigma\sigma^\top,b)(t,X_t(\omega),u_t(\omega)),\quad c^2(t,\omega)\geq l(t,X_t(\omega),u_t(\omega)), \quad (t,\omega)\in Y
\end{equation}
and the desired result follows. \end{proof}

\appendix
\section{Auxiliary results}\label{A:A}

Let $\left( Y,\mathcal G, \mu\right)$ be a measure space, $k,m$ be natural numbers, and  $U$ a closed subset of an Euclidean space. Let 
\[
c^1:Y\rightarrow \R^{k}, \quad c^2:Y\rightarrow \R^{m}, \ \ \phi:Y\times U\rightarrow \R^{k}, \ \ \psi:Y\times U\rightarrow \R^{m}_{+},
\]
be given measurable functions with $u\rightarrow \phi(y,u)$ continuous and $u\rightarrow \psi_i(y,u)$ lower semi-continuous, for each $y\in Y$ and $i=1,2,...m.$ Define
\[
\Gamma(y,U)=\left\{ ( \phi(y,u),z ) \in \R^{k} \times \R^{m}: u \in U, z_i \geq \psi_i(y,u)\text{ for $i=1,\ldots,m$} \right\}.
\]

\begin{theorem}
\label{Filipov}
If $(c^1(y),c^2(y))\in \Gamma(y,U)$ for all $y\in Y$, then there exists a measurable function $u:Y \rightarrow U $ such that
$c^1(y)=\phi(y,u(y))$ and $c^2_i(y)\geq \psi_i(y,u(y)),$ $i=1,\dots,m$.
\end{theorem}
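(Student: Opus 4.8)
The plan is to recast the pointwise solvability hypothesis as the statement that a set-valued map has nonempty values, and then to extract a measurable single-valued selection. Concretely, I would define the multifunction $F\colon Y\rightrightarrows U$ by
\[
F(y)=\bigl\{u\in U:\ \phi(y,u)=c^1(y)\ \text{and}\ \psi_i(y,u)\le c^2_i(y),\ i=1,\dots,m\bigr\}.
\]
The hypothesis $(c^1(y),c^2(y))\in\Gamma(y,U)$ for every $y\in Y$ says precisely that $F(y)\neq\emptyset$ for all $y$, and any measurable selection $u(\cdot)$ of $F$ gives the assertion. First I would check that $F$ has closed values: for fixed $y$, the set $\{u:\phi(y,u)=c^1(y)\}$ is closed because $u\mapsto\phi(y,u)$ is continuous, each set $\{u:\psi_i(y,u)\le c^2_i(y)\}$ is closed because $u\mapsto\psi_i(y,u)$ is lower semi-continuous (hence has closed sublevel sets), and $U$ itself is closed; thus $F(y)$ is a finite intersection of closed sets.

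Next I would verify that the graph $\mathrm{Gr}(F)=\{(y,u)\in Y\times U: u\in F(y)\}$ belongs to the product $\sigma$-algebra $\mathcal G\otimes\mathcal B(U)$. Since $\phi$ and $\psi$ are (jointly) measurable and $c^1,c^2$ are measurable in $y$, the maps $(y,u)\mapsto\phi(y,u)-c^1(y)$ and $(y,u)\mapsto\psi_i(y,u)-c^2_i(y)$ are $\mathcal G\otimes\mathcal B(U)$-measurable, so
\[
\mathrm{Gr}(F)=\{\phi-c^1=0\}\cap\bigcap_{i=1}^m\{\psi_i-c^2_i\le0\}
\]
is measurable. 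It is convenient to encode this through the single nonnegative integrand
\[
g(y,u)=\bigl|\phi(y,u)-c^1(y)\bigr|+\sum_{i=1}^m\bigl(\psi_i(y,u)-c^2_i(y)\bigr)^{+},
\]
which is $\mathcal G\otimes\mathcal B(U)$-measurable and lower semi-continuous in $u$ (a normal integrand), and for which $F(y)=\{u:g(y,u)=0\}=\operatorname{argmin}_{u\in U}g(y,\cdot)$, the value $0$ being attained precisely because $F(y)\neq\emptyset$.

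Finally, since $U$ is a closed subset of a Euclidean space it is a Polish (indeed $\sigma$-compact) space, so I would invoke a measurable selection theorem — the von Neumann--Aumann selection theorem (equivalently the Kuratowski--Ryll-Nardzewski theorem once weak measurability of the closed-valued $F$ is established, or the measurable-$\operatorname{argmin}$ theorem for normal integrands) — to produce a $\mathcal G$-measurable map $u\colon Y\to U$ with $u(y)\in F(y)$ for all $y$. Such $u$ satisfies $c^1(y)=\phi(y,u(y))$ and $c^2_i(y)\ge\psi_i(y,u(y))$, which is the claim. The hard part is exactly this last step: passing from measurability of $\mathrm{Gr}(F)$ to an honest measurable selection, because the projection of a product-measurable set onto $Y$ is in general only analytic rather than $\mathcal G$-measurable. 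I would handle this in the standard way via the projection theorem, which calls for working with a complete $\sigma$-finite measure space $(Y,\mathcal G,\mu)$ — in the intended application $\mathcal G$ is the predictable $\sigma$-algebra and one passes to its completion — so that projections of graphs remain measurable; the closedness of the values of $F$ then ensures the selection can be taken genuinely $\mathcal G$-measurable.
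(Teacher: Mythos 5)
Your proposal is correct in substance, but note that the paper itself offers no argument at all for this statement: its entire ``proof'' is the citation \cite[Theorem A.9]{haussmann}, a Filippov-type implicit measurable function theorem. What you have written is essentially a reconstruction of the standard proof that sits behind that citation: recast the hypothesis as nonemptiness of the multifunction $F(y)$, check closedness of values (continuity of $\phi(y,\cdot)$, lower semicontinuity of $\psi_i(y,\cdot)$, closedness of $U$), check that $\mathrm{Gr}(F)\in\mathcal G\otimes\B(U)$ using joint measurability, and then apply a von Neumann--Aumann--type selection theorem, with the projection-theorem/completeness issue correctly identified as the crux. So the route is the right one, and it is the same route as the cited reference rather than a genuinely different argument.

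One step deserves a caution, since it is stated too optimistically. Your closing claim --- that after working over the $\mu$-completion, ``the closedness of the values of $F$ then ensures the selection can be taken genuinely $\mathcal G$-measurable'' --- does not hold as stated. Aumann's theorem over the completion yields a selection $u^*$ that is measurable for the completed $\sigma$-algebra and satisfies $u^*(y)\in F(y)$ for all $y$; replacing $u^*$ by a $\mathcal G$-measurable version changes it on a $\mu$-null set $N\in\mathcal G$, and on $N$ you face the original selection problem again with no measure-theoretic help, so closedness of the values does not by itself restore a selection that is both genuinely $\mathcal G$-measurable and valid for \emph{every} $y$. The clean statement is: either assume $(Y,\mathcal G,\mu)$ complete (which is effectively what the cited Theorem A.9 requires), or settle for a $\mathcal G$-measurable $u$ with $u(y)\in F(y)$ for $\mu$-a.e.\ $y$. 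In the paper's application (Theorem \ref{thm-strict}), where $\mathcal G$ is the predictable $\sigma$-algebra and the conclusions are only asserted $\Prob$-a.s., the almost-everywhere version is all that is used, so this soft spot is immaterial there; it only matters if one insists on the pointwise-in-$y$ conclusion exactly as the statement is phrased.
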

\begin{proof} \cite[Theorem A.9]{haussmann}. \end{proof}
\section{Relative compactness and limit theorems for Young measures}\label{A:B}
Young measures on metrizable Suslin control sets have been studied by  \cite{balder2} and  \cite{deFitte1}. We refer to the book  \cite{cadfval} for more details. 

\begin{proposition}\label{stsuslinmet}
	Let $U$ be metrisable (resp. metrisable Suslin). Then the space $\Y(0,T;U)$ endowed with the stable topology is also metrizable (resp. metrizable Suslin).
\end{proposition}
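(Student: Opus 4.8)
The plan is to establish the two assertions in turn, metrizability first and the Suslin property second. Throughout I write a generic Young measure via its disintegration $\mu(du,dt)=\nu_t(du)\,dt$, and I exploit repeatedly that the time-marginal of every $\mu\in\Y(0,T;U)$ is the \emph{fixed} Lebesgue measure. Recall also that a metrizable Suslin space is automatically separable, being a continuous image of a Polish space, so in the Suslin case $U$ is separable metrizable and the two claims can share most of their machinery.

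For metrizability I would show that the stable topology is generated by a countable family of the defining maps. The key point is that the fixed time-marginal lets me discretize the time-test-sets uniformly: fix a countable algebra $\{D_j\}_{j\in\mathbb{N}}$ of Borel subsets of $[0,T]$ that generates $\B([0,T])$ and is dense for the pseudometric $d(D,D')=\mathcal{L}(D\triangle D')$ (e.g. finite unions of dyadic intervals), together with a countable convergence-determining family $\{f_k\}_{k\in\mathbb{N}}\subset\mathcal{C}_b(U)$ for the narrow topology on finite Borel measures on the separable metrizable space $U$. For $f\in\mathcal{C}_b(U)$ with $\|f\|_\infty\le1$ one has the uniform bound
\[
\Bigl|\int_{U\times D}f\,d\mu-\int_{U\times D'}f\,d\mu\Bigr|\le\mu\bigl(U\times(D\triangle D')\bigr)=\mathcal{L}(D\triangle D'),
\]
valid for \emph{every} $\mu\in\Y(0,T;U)$. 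Hence in the definition of the stable topology the set $D$ may be replaced, uniformly in $\mu$, by a nearby $D_j$, and for fixed $D_j$ the maps $\mu\mapsto\int_{U\times D_j}f\,d\mu$ are controlled, as $f$ ranges over $\mathcal{C}_b(U)$, by the maps $\mu\mapsto\int_{U\times D_j}f_k\,d\mu$, since $\{f_k\}$ is convergence-determining for the finite measures $\mu(\cdot\times D_j)$ on $U$. I would then conclude that the countable family $\{\mu\mapsto\int_{U\times D_j}f_k\,d\mu\}_{j,k}$ generates the stable topology and separates points, so that the induced injection into $\mathbb{R}^{\mathbb{N}}$ is a homeomorphism onto its image and $\Y(0,T;U)$ is metrizable.

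For the Suslin property, let $\phi:S\to U$ be a continuous surjection with $S$ Polish, and define the pushforward $\Phi:\Y(0,T;S)\to\Y(0,T;U)$ by $\Phi(\mu)=(\phi\times\mathrm{id}_{[0,T]})_*\mu$. Since $\phi$ fixes the time coordinate, $\Phi(\mu)$ again has Lebesgue time-marginal, so $\Phi$ is well defined; and for $f\in\mathcal{C}_b(U)$, $D\in\B([0,T])$,
\[
\int_{U\times D}f\,d\Phi(\mu)=\int_{S\times D}(f\circ\phi)\,d\mu,
\]
with $f\circ\phi\in\mathcal{C}_b(S)$, so $\Phi$ is continuous for the stable topologies. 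Granting that $\Y(0,T;S)$ is Suslin for Polish $S$ (it embeds as a Suslin subset of $\mathbb{R}^{\mathbb{N}}$ through the coordinate maps of the previous step, the defining constraints of a Young measure being Borel), surjectivity of $\Phi$ would exhibit $\Y(0,T;U)$ as the continuous image of a Suslin space, hence Suslin; together with the metrizability already shown, it is metrizable Suslin.

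The delicate points, and the place I expect the real work to lie, are two measure-theoretic lemmas. First, justifying that the countable family $\{f_k\}$ genuinely generates the stable topology needs the disintegrated measures $\mu(\cdot\times D_j)$ to be tight, so that narrow convergence is detected by a countable convergence-determining family; this is automatic when $U$ is Suslin, i.e. a Radon space, which is exactly why the Suslin case is cleaner than the general metrizable one. Second, the surjectivity of $\Phi$ rests on a jointly measurable lift $t\mapsto\tilde\nu_t\in\mathcal{P}(S)$ with $\phi_*\tilde\nu_t=\nu_t$, which I would obtain from a measurable selection theorem applied to the multifunction $t\mapsto\{\lambda\in\mathcal{P}(S):\phi_*\lambda=\nu_t\}$, whose values are closed and nonempty because $\lambda\mapsto\phi_*\lambda$ is continuous and surjective onto $\mathcal{P}(U)$. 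The topological bookkeeping around these two lemmas is routine.
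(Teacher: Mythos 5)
The paper itself does not prove this proposition; its ``proof'' is a citation to Propositions 2.3.1 and 2.3.3 of \cite{cadfval}. Your reconstruction has a reasonable architecture --- a countable family of test functionals exploiting the fixed Lebesgue time-marginal for metrizability, and a pushforward along the continuous surjection $\phi:S\to U$ for the Suslin property --- but two load-bearing steps contain genuine gaps. In the metrizability half, your justification that the countable family $\{\mu\mapsto\int_{U\times D_j}f_k\,d\mu\}_{j,k}$ \emph{generates} the stable topology is that $\{f_k\}$ is convergence-determining for the measures $\mu(\cdot\times D_j)$. But convergence-determination is a statement about sequences, while generating a topology is a statement about neighborhoods, and comparable topologies with the same convergent sequences need not coincide: on $\ell^1$ the weak and norm topologies have exactly the same convergent sequences (Schur's property) yet differ. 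Since metrizability of the stable topology is precisely what is at stake, sequential arguments cannot be used to identify it with the countable-family topology; one needs neighborhood-level containments. Your time-direction bound $\abs{\int_{U\times D}f\,d\mu-\int_{U\times D_j}f\,d\mu}\le \norm{f}_{\infty}\,\mathcal L(D\triangle D_j)$ is uniform in $\mu$ and does operate at that level, but the $U$-direction step does not; closing it requires a portmanteau-type argument (e.g.\ embed $U$ into the Hilbert cube $H$, take $\{f_k\}$ to be restrictions of a dense subset of $\mathcal C(H)$, and use that the stable and narrow topologies admit subbases of sets $\{\mu:\mu(D\times G)>c\}$ with $G$ open, which transfer through the embedding). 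Note also that your diagnosis of the delicate point as \emph{tightness} of $\mu(\cdot\times D_j)$ is misplaced --- that embedding argument for finite Borel measures on a separable metrizable space needs no tightness --- and that, since you use separability of $U$ throughout, the first assertion of the proposition (plain metrizable, possibly non-separable $U$) is not covered by your argument at all.

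In the Suslin half, surjectivity of $\Phi$ requires the fibers $\{\lambda\in\mathcal P(S):\phi_*\lambda=\nu_t\}$ to be nonempty, and you justify this ``because $\lambda\mapsto\phi_*\lambda$ is continuous and surjective onto $\mathcal P(U)$''. That is circular: nonemptiness of every fiber \emph{is} surjectivity of $\phi_*:\mathcal P(S)\to\mathcal P(U)$, and the statement that every Borel probability measure on a metrizable Suslin space lifts through a continuous surjection from a Polish space is a nontrivial theorem (it uses that Borel measures on Suslin spaces are Radon, together with a von Neumann-type measurable selection; see Theorem 9.1.9 in Bogachev's \emph{Measure Theory}). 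It is in effect the pointwise version of the very statement you are proving, so it must be proved or cited, not asserted. Two further points in the same step need care: the measurability of the multifunction $t\mapsto\phi_*^{-1}(\{\nu_t\})$ passes through images $\phi_*(V)$ of open sets $V$, which are in general only analytic, so the selection theorem yields a lift that is measurable only for the Lebesgue-completed $\sigma$-algebra (acceptable for Young measures, but it should be said); and the claim that $\Y(0,T;S)$ is Suslin for Polish $S$ because ``the defining constraints of a Young measure are Borel'' is itself unproved --- the clean route is to show, by the same uniform-marginal trick you use for time sets, that the stable topology on $\Y(0,T;S)$ coincides with the relative narrow topology of $\mathcal P([0,T]\times S)$ (after normalization), and that $\Y(0,T;S)$ is closed there, hence Polish.
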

 \begin{proof}
 Propositions 2.3.1 and 2.3.3 in \cite{cadfval}. \end{proof} 

\textcolor{black}{
The notion of tightness for Young measures that we use was introduced by  \cite{val90}. See also the book  \cite{crauel}. Recall that a set-valued function $[0,T]\ni t\mapsto K_t\subset U$ is said to be \emph{measurable} if and only if for every open set $\tilde U\subset U$,
\[
\set{t\in [0,T]: K_t\cap \tilde U\neq\varnothing}\in\B([0,T]).
\]
}
\textcolor{black}{
\begin{definition}
	We say that a set $\frakJ\subset\Y(0,T;U)$  is \emph{flexibly tight} if,
	for each $\varepsilon>0$,  there exists a measurable set-valued mapping $[0,T]\ni t\mapsto K_t\subset U$ such that $K_t$ is compact for all $t\in [0,T]$ and
	\[
	\sup_{\mu\in\frakJ}\,\int_{U\times[0,T]} \mathbf{1}_{K_t^c}(u)\,\mu(du,dt)<\varepsilon.
	\]
\end{definition} 
}

\begin{theorem}[\textbf{Equivalence theorem for flexible tightness}]\label{flexiblytight}
	For any $\frakJ\subset\Y(0,T;U)$ the two following conditions are equivalent:
	\begin{enumerate}
		\item $\frakJ$ is flexibly tight
		\item There exists $\vartheta\in IC(0,T;U)$ such that
		\[
		\sup_{\mu\in\frakJ}\,\int_{U\times[0,T]}\vartheta(t,u)\,\mu(du,dt)< +\infty.
		\]
	\end{enumerate}
\end{theorem}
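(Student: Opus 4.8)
The plan is to prove the two implications separately, since the equivalence splits naturally into an easy direction, $(2)\Rightarrow(1)$, resting on a Markov-type inequality, and a harder direction, $(1)\Rightarrow(2)$, where an inf-compact integrand must be manufactured from a sequence of tightening compact multifunctions.

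For $(2)\Rightarrow(1)$, given $\vartheta\in IC(0,T;U)$ with $M:=\sup_{\mu\in\frakJ}\int_{U\times[0,T]}\vartheta\,d\mu<\infty$, I would fix $\varepsilon>0$, choose $R>M/\varepsilon$, and take as the tightening multifunction the level set $K_t:=\set{u\in U:\vartheta(t,u)\le R}$. Inf-compactness makes each $K_t$ compact, and the pointwise bound $\mathbf{1}_{K_t^c}(u)\le R^{-1}\vartheta(t,u)$ integrates to $\sup_{\mu}\int \mathbf{1}_{K_t^c}\,d\mu\le M/R<\varepsilon$, which is exactly flexible tightness. The only nonroutine point here is that $t\mapsto K_t$ is a measurable multifunction; this I would obtain from the theory of normal integrands (see \cite{cadfval}), using that $\vartheta$ is jointly measurable and lower semi-continuous in $u$.

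For $(1)\Rightarrow(2)$, the idea is to select, for each $n\in\mathds{N}$, a measurable compact-valued multifunction $t\mapsto K^n_t$ witnessing flexible tightness at level $\varepsilon=2^{-n}$, i.e. $\sup_{\mu}\int \mathbf{1}_{(K^n_t)^c}\,d\mu<2^{-n}$; then replace $K^n_t$ by $\bigcup_{j\le n}K^j_t$, which is still compact and still witnesses the bound because complements shrink, so that the sequence becomes increasing; and finally set $\vartheta(t,u):=\sum_{n\ge 1}\mathbf{1}_{(K^n_t)^c}(u)$. Monotone convergence then bounds $\int\vartheta\,d\mu$ by $\sum_n 2^{-n}=1$ uniformly over $\frakJ$. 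The observation that makes this construction succeed is that, because the $K^n_t$ are nested, $\vartheta(t,u)=N(t,u)-1$ where $N(t,u)$ is the first index with $u\in K^n_t$; consequently the sublevel set $\set{u:\vartheta(t,u)\le R}$ equals $K^{\lfloor R+1\rfloor}_t$ and is therefore compact, which is precisely what secures $\vartheta(t,\cdot)$ inf-compact and hence $\vartheta\in IC(0,T;U)$.

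I expect the main obstacle to be the measurability bookkeeping rather than the analytic estimates, both of which are short. Specifically, I need joint (product) measurability of each map $(t,u)\mapsto\mathbf{1}_{(K^n_t)^c}(u)$, which requires that a measurable compact-valued multifunction into the metrizable Suslin space $U$ has a $\B([0,T])\otimes\B(U)$-measurable graph; and, in the reverse direction, that the level-set multifunction of a normal integrand is measurable. Both facts are standard in Young-measure and set-valued analysis over Suslin spaces, so I would invoke the relevant statements in \cite{cadfval} (and \cite{crauel}) rather than reprove them, after which the argument reduces to the two elementary estimates described above.
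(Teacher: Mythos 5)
Your proof is correct, and it is worth pointing out that the paper itself contains no argument for this theorem: its ``proof'' is a one-line deferral to the equivalence assertion in \cite[Definition 3.3]{balder1998lectures}. Your write-up is therefore a self-contained reconstruction of what that citation hides, and it follows the standard route. The direction $(2)\Rightarrow(1)$ is the Markov--Chebyshev estimate applied to the level-set multifunction $K_t=\set{u\in U:\vartheta(t,u)\le R}$ with $R>M/\varepsilon$, and the direction $(1)\Rightarrow(2)$ is the classical construction of an inf-compact integrand as $\vartheta(t,u)=\sum_{n\ge 1}\mathbf{1}_{(K^n_t)^c}(u)$ from a dyadic sequence of tightening multifunctions made nested by taking finite unions; your observation that nestedness turns $\vartheta$ into a first-entry counting function, so that each sublevel set $\set{\vartheta(t,\cdot)\le R}$ is literally one of the compact sets $K^{\lfloor R+1\rfloor}_t$, is exactly what makes inf-compactness immediate, and the uniform bound $\sup_{\mu\in\frakJ}\int\vartheta\,d\mu\le 1$ follows from monotone convergence as you say. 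What the paper's citation buys is precisely the two measurability points you flag: (i) that a measurable compact-valued multifunction into a metrizable Suslin space has a $\B([0,T])\otimes\B(U)$-measurable graph, which is needed even for the integrals in the definition of flexible tightness to be well defined; and (ii) that the level-set multifunction of a jointly measurable integrand that is inf-compact (hence lower semi-continuous) in $u$ is measurable. On point (ii), be aware that the usual projection-theorem argument gives that $\set{t:K_t\cap \tilde U\neq\varnothing}$ is universally (Lebesgue) measurable rather than Borel, a harmless discrepancy with the paper's definition of measurable multifunction since the time marginal of every Young measure is Lebesgue measure, but one you should acknowledge explicitly if you import these facts from \cite{cadfval} or \cite{crauel} rather than reproving them.
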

\begin{proof}
See the equivalence assertion in  \cite[Definition 3.3]{balder1998lectures}. \end{proof}

\begin{theorem}[\textbf{Prohorov criterion for relative compactness},]\label{prohym}
	Let $U$ be a metrisable Suslin space. Then every flexibly tight subset of $\Y(0,T;U)$ is sequentially relatively compact in the stable topology.
\end{theorem}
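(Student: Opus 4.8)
The plan is to compactify the target, extract a convergent subsequence in the resulting compact setting, and then use flexible tightness to show that the limit is still a Young measure on $U$, after which convergence in the finer stable topology of $\Y(0,T;U)$ follows from a semicontinuity argument. Since $U$ is metrisable and separable (being Suslin), it embeds homeomorphically into a compact metrisable space $\hat U$, for instance the Hilbert cube $[0,1]^{\mathds N}$; identifying $U$ with its image, a Borel Suslin subset of $\hat U$, every element of $\Y(0,T;U)$ becomes an element of $\Y(0,T;\hat U)$ carried by $U\times[0,T]$. The space $\Y(0,T;\hat U)$ is stably compact and metrizable: identifying a Young measure with its disintegration $t\mapsto\mu_t\in\mathcal P(\hat U)$, the set of Young measures is the subset of $L^1(0,T;\mathcal C(\hat U))^*=L^\infty_{w*}(0,T;\mathcal M(\hat U))$ cut out by the conditions $\mu_t\ge 0$ and $\mu_t(\hat U)=1$ for a.e.\ $t$, which are preserved under weak$^*$ limits; it is therefore weak$^*$-closed and bounded, hence weak$^*$-compact by Banach--Alaoglu and metrizable by separability of $L^1(0,T;\mathcal C(\hat U))$, and on it the weak$^*$ and stable topologies coincide because simple integrands $\sum_i\mathbf 1_{D_i}(t)f_i(u)$ are dense in $L^1(0,T;\mathcal C(\hat U))$. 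Hence a sequence $(\mu_n)$ in the flexibly tight set has a subsequence $\mu_{n_k}\to\mu$ stably in $\Y(0,T;\hat U)$, with $\mu$ again a Young measure.

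To see that $\mu$ is carried by $U$, fix $\varepsilon>0$ and use flexible tightness to choose a measurable compact-valued map $t\mapsto K_t\subset U$ with $\sup_k\int\mathbf 1_{\hat U\setminus K_t}(u)\,\mu_{n_k}(du,dt)<\varepsilon$ (the mass outside $K_t$ is the same whether computed in $U$ or in $\hat U$, since $\mu_{n_k}$ lives on $U$). Each $K_t$ is compact, hence closed, in $\hat U$, so $u\mapsto\mathbf 1_{\hat U\setminus K_t}(u)$ is a bounded nonnegative lower semicontinuous normal integrand, and the Fatou--type lemma for Young measures (Lemma \ref{Lem:lscym}) yields
\[
\int\mathbf 1_{\hat U\setminus K_t}(u)\,\mu(du,dt)\le\liminf_k\int\mathbf 1_{\hat U\setminus K_t}(u)\,\mu_{n_k}(du,dt)\le\varepsilon.
\]
Since $\hat U\setminus U\subset\hat U\setminus K_t$, this gives $\mu\bigl((\hat U\setminus U)\times[0,T]\bigr)\le\varepsilon$, and letting $\varepsilon\downarrow 0$ shows that $\mu$ is carried by $U\times[0,T]$, so that $\mu\in\Y(0,T;U)$; the same bound also shows $\int\mathbf 1_{\hat U\setminus K_t}(u)\,\mu(du,dt)\le\varepsilon$.

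It remains to upgrade $\hat U$-stable convergence to stable convergence in $\Y(0,T;U)$, i.e.\ to prove $\int_{U\times D}f\,d\mu_{n_k}\to\int_{U\times D}f\,d\mu$ for every $D\in\B([0,T])$ and $f\in\mathcal C_b(U)$; we may assume $0\le f\le 1$. Let $\underline f$ and $\bar f$ be the lower and upper semicontinuous envelopes of $f$ on $\hat U$; both are bounded Borel functions with $\underline f\le\bar f$ on $\hat U$ and $\underline f=\bar f=f$ on $U$ by continuity of $f$ there. The integrand $(t,u)\mapsto\mathbf 1_D(t)\underline f(u)$ is a bounded nonnegative lower semicontinuous normal integrand, so Lemma \ref{Lem:lscym}, together with the fact that all the measures are carried by $U$, gives
\[
\int_{U\times D}f\,d\mu\le\liminf_k\int_{U\times D}f\,d\mu_{n_k}.
\]
Applying the same inequality to $\mathbf 1_D(t)\bigl(1-\bar f(u)\bigr)$ and using that every Young measure has time-marginal $\mathcal L$, so that $\int\mathbf 1_D(t)\,\nu(du,dt)=\mathcal L(D)$ for $\nu\in\{\mu,\mu_{n_k}\}$, the constant terms cancel and we obtain $\limsup_k\int_{U\times D}f\,d\mu_{n_k}\le\int_{U\times D}f\,d\mu$. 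Combining the two bounds gives the convergence, so $\mu_{n_k}\to\mu$ stably in $\Y(0,T;U)$, proving sequential relative compactness.

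The main obstacle is that $U$ is not compact, so after compactifying there is no a priori reason why the limit $\mu$ should remain a Young measure on $U$ rather than charge the added boundary $\hat U\setminus U$; this is exactly what flexible tightness prevents, through the lower-semicontinuity estimate of the second step. A secondary subtlety is that the stable topology of $\Y(0,T;U)$ is strictly finer than the trace of the stable topology of $\Y(0,T;\hat U)$, since it tests against all of $\mathcal C_b(U)$ rather than only against restrictions of $\mathcal C(\hat U)$; the semicontinuous-envelope argument of the third step, which relies only on the limit being carried by $U$ and on the fixed time-marginal, bridges this gap without requiring any uniform approximation of $f$ on the $t$-dependent compact fibers.
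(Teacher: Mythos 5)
Your proof is correct, but it necessarily takes a different route from the paper, because the paper does not actually prove this theorem: its ``proof'' is the citation \cite[Theorem 4.3.5]{cadfval}. Your argument is the classical direct one: embed the separable metrizable space $U$ into the Hilbert cube $\hat U$; identify $\Y(0,T;\hat U)$ with a weak$^*$-closed subset of the unit ball of $L^1(0,T;\mathcal{C}(\hat U))^*$, so that Banach--Alaoglu and separability of the predual give compactness and metrizability for the stable ($=$ weak$^*$) topology; use flexible tightness together with Lemma \ref{Lem:lscym} to show that the subsequential limit puts no mass on $(\hat U\setminus U)\times[0,T]$; and recover $U$-stable convergence by the semicontinuous-envelope argument. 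This is self-contained, reuses only a tool already in the paper's appendix (Lemma \ref{Lem:lscym}), and makes explicit that flexible tightness enters precisely to prevent escape of mass to the compactification boundary. What the paper's citation buys is brevity and the full generality of the treatment in \cite{cadfval}, which is organized around the abstract theory of stable topologies rather than an explicit compactification.

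Three details deserve repair, none fatal. First, a Suslin subspace of $\hat U$ need not be Borel (an analytic set is Borel only if it is also co-analytic); what is true, and what your argument actually uses, is universal measurability: your tightness estimate bounds the outer $\mu$-measure of $(\hat U\setminus U)\times[0,T]$ by every $\varepsilon>0$, which is enough to make the restriction of $\mu$ to the trace $\sigma$-algebra $\B(U\times[0,T])$ well defined. Second, since $f$ is defined only on $U$, you should define $\underline f$ (resp.\ $\bar f$) as the l.s.c.\ (resp.\ u.s.c.) envelope of the extension of $f$ by $1$ (resp.\ by $0$) off $U$; these are Borel, $[0,1]$-valued, suitably semicontinuous, and equal to $f$ on $U$, which is all you use---your incidental claim that $\underline f\le\bar f$ on all of $\hat U$ fails off $\overline U$ under the natural conventions, but it is never needed. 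Third, applying Lemma \ref{Lem:lscym} to $(t,u)\mapsto\mathbf{1}_{\hat U\setminus K_t}(u)$ requires joint measurability in $(t,u)$; this holds because a measurable compact-valued (hence closed-valued) map $t\mapsto K_t$ into a Polish space admits a Castaing representation, so its graph is product-measurable.
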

\begin{proof}
\cite[Theorem 4.3.5]{cadfval}
\end{proof}

\begin{lemma}\label{Lem:lscym}
	Let $U$ be a metrisable Suslin space and $G\in L^1(0,T;\R)$. Let us assume that
	\[
	l:[0,T]\times U\to [-\infty,+\infty]
	\]
	is a measurable function such that $l(t,\cdot)$ is lower semi-continuous for every $t\in [0,T]$ and satisfies one of the two following conditions:
	\begin{enumerate}
		\item $\abs{l(t,u)}\le G(t)$,  a.e. $t\in [0,T]$,
		\item $l\geq 0$.
	\end{enumerate}
	If $\mu_n\to\mu$ stably in $\Y(0,T;U)$,  then
	\[
	\int_{U\times[0,T]} l(t,u)\,\mu(du,dt)\le \liminf_{n\to\infty}\int_{U\times[0,T]} l(t,u)\,\mu_n(du,dt).
	\]
\end{lemma}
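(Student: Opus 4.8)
The plan is to reduce the statement to a nonnegative integrand and then exploit the defining property of the stable topology through a monotone approximation of $l$ by bounded Carath\'eodory integrands. First I would collapse both hypotheses into the single case $l\ge 0$. Under condition (1) the function $\tilde l(t,u):=l(t,u)+G(t)$ satisfies $0\le\tilde l(t,u)\le 2G(t)$ and is again measurable in $t$ and lower semi-continuous in $u$; since the time-marginal of every $\nu\in\Y(0,T;U)$ is the Lebesgue measure, one has $\int_{U\times[0,T]}\tilde l\,d\nu=\int_{U\times[0,T]}l\,d\nu+\int_0^TG(t)\,dt$ for $\nu=\mu$ and $\nu=\mu_n$, and subtracting the finite constant $\int_0^TG\,dt$ shows that the desired inequality for $l$ is equivalent to the one for $\tilde l\ge0$. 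Under condition (2) there is nothing to do. Hence it suffices to treat a nonnegative normal integrand $l$ (measurable in $t$, lower semi-continuous in $u$), possibly $+\infty$-valued.

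The core step is to write $l$ as the pointwise increasing limit of bounded Carath\'eodory integrands. Fixing a metric $d$ compatible with the topology of $U$, I would introduce the Moreau--Yosida regularizations
\[
l_k(t,u):=\min\bigl\{k,\ \inf_{v\in U}\bigl[l(t,v)+k\,d(u,v)\bigr]\bigr\}.
\]
Each $l_k(t,\cdot)$ is bounded by $k$ and $k$-Lipschitz, hence continuous, and lower semi-continuity together with nonnegativity of $l(t,\cdot)$ yields $l_k\uparrow l$ pointwise as $k\to\infty$. The part I expect to be the main obstacle is the measurability of the envelopes $l_k$ in the variable $t$, which is needed for $l_k$ to be a genuine Carath\'eodory integrand: I would handle it by realizing $t\mapsto\operatorname{epi}l(t,\cdot)$ as a measurable closed-valued multifunction (this is exactly what it means for $l$ to be a normal integrand on the Suslin space $U$) and invoking the standard measurable-selection and normal-integrand machinery of \cite{cadfval} to deduce measurability of the infimal-convolution envelope.

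Finally I would pass to the limit. For each fixed $k$, $l_k$ is a bounded integrand continuous in $u$, so Theorem \ref{thm:contym} gives $\int_{U\times[0,T]}l_k\,d\mu_n\to\int_{U\times[0,T]}l_k\,d\mu$ as $n\to\infty$. Since $l_k\le l$, this produces
\[
\int_{U\times[0,T]}l_k\,d\mu=\lim_{n\to\infty}\int_{U\times[0,T]}l_k\,d\mu_n\le\liminf_{n\to\infty}\int_{U\times[0,T]}l\,d\mu_n
\]
for every $k$. Letting $k\to\infty$ and applying the monotone convergence theorem on the left, which is valid since $0\le l_k\uparrow l$ (and correctly forces the right-hand side to be $+\infty$ whenever $\int_{U\times[0,T]}l\,d\mu=+\infty$), yields
\[
\int_{U\times[0,T]}l\,d\mu\le\liminf_{n\to\infty}\int_{U\times[0,T]}l\,d\mu_n,
\]
which is the assertion.
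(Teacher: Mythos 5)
Your overall route (reduce to $l\ge 0$ by adding $G$, approximate from below by the truncated Lipschitz regularizations $l_k$, pass to the limit in $k$ by monotone convergence) is the standard way to prove this statement, and those reductions are sound; note that the paper itself does not argue at all here but simply cites \cite[Lemma 2.15]{brzezniak2013optimal}. The measurability worry you flag in your Step 3 is also repairable along the lines you suggest: joint measurability of $l$ plus the Suslin property of $U$ make the infimal convolution universally measurable via the measurable projection theorem, which suffices for integration against the Young measures involved.

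The genuine gap is the invocation of Theorem \ref{thm:contym}. In this paper that theorem is a \emph{consequence} of Lemma \ref{Lem:lscym}: its proof reads ``Use Lemma \ref{Lem:lscym} with $f$ and $-f$.'' So your argument, read inside the paper, is circular: all the analytic content has been pushed into the step $\int l_k\,d\mu_n\to\int l_k\,d\mu$, and convergence for bounded Carath\'eodory integrands is essentially equivalent to the lemma you are proving. To close the gap you must derive that convergence directly from the definition of the stable topology, which only tests integrands of the split form $\mathbf{1}_D(t)f(u)$ with $f\in\mathcal{C}_b(U)$. One honest way: (i) show that stable convergence implies weak convergence of $\mu_n$ to $\mu$ as finite measures on $U\times[0,T]$, by checking $\liminf_n\mu_n(G)\ge\mu(G)$ first for open rectangles $V\times W$ (write $\mathbf{1}_V$ as an increasing limit of functions in $\mathcal{C}_b(U)$) and then for countable unions of rectangles, which exhaust the open sets since $U\times[0,T]$ is separable metric; (ii) for a bounded Carath\'eodory integrand $l_k$, use the Scorza--Dragoni theorem to find a compact $J_\eps\subset[0,T]$ with $|[0,T]\setminus J_\eps|<\eps$ on which $l_k$ is jointly continuous, and use the fact that every Young measure has Lebesgue time-marginal to bound the error off $J_\eps$ by $k\eps$ uniformly in $n$. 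A second, smaller, problem: even taken as a black box, Theorem \ref{thm:contym} does not literally apply to your $l_k$, since membership in $L^1(0,T;\mathcal{C}_b(U))$ in the Bochner sense requires $t\mapsto l_k(t,\cdot)$ to be essentially separably valued in the sup norm, which can fail (take $U=\mathds{N}$ discrete and $l(t,n)$ the $n$-th binary digit of $t/T$; then $l_k=l$ for $k\ge 1$ and the range has pairwise sup-distance $1$). So the Carath\'eodory-integrand version of the convergence is needed in any case, and it is exactly the part that remains to be proved.
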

\begin{proof}
\cite[Lemma 2.15]{brzezniak2013optimal}
\end{proof}

It is worth mentioning that these last two results are, in fact, the main reasons why it suffices for the control set $U$ to be only metrizable and Suslin, in contrast with the existing literature on stochastic relaxed controls. Indeed, Theorem \ref{prohym} is key to obtain tightness of the laws of random Young measures in the proof of the main existence result, and Lemma \ref{Lem:lscym} is used to prove the lower semi-continuity of the relaxed cost functionals as well as Theorem \ref{thm:contym} below.

\begin{theorem}\label{thm:contym}
	Let $U$ be a metrisable Suslin space. If $\mu_n\to\mu$ stably in $\Y(0,T;U)$,  then for every $f\in L^1(0,T;\mathcal{C}_b(U))$ we have
	\[
	\lim_{n\to\infty}\int_{U\times[0,T]}f(t,u)\,\mu_n(du,dt)=\int_{U\times[0,T]} f(t,u)\,\mu(dt,du).
	\]
\end{theorem}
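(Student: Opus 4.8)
The plan is to reduce the claim to the very definition of the stable topology by a density-plus-uniform-bound argument. By definition the maps $\nu\mapsto\int_{U\times D}g(u)\,\nu(du,dt)$ are continuous for every $D\in\B([0,T])$ and $g\in\mathcal{C}_b(U)$; equivalently, $\mu_n\to\mu$ stably yields
\[
\int_{U\times[0,T]}\mathbf{1}_D(t)\,g(u)\,\mu_n(du,dt)\to\int_{U\times[0,T]}\mathbf{1}_D(t)\,g(u)\,\mu(du,dt).
\]
By linearity this convergence holds for every \emph{step function} $f(t,u)=\sum_{i=1}^k\mathbf{1}_{D_i}(t)\,g_i(u)$ with $D_i\in\B([0,T])$ and $g_i\in\mathcal{C}_b(U)$, i.e. for every simple function with values in the Banach space $\mathcal{C}_b(U)$. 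First I would record this base case; the remainder is an approximation of a general $f\in L^1(0,T;\mathcal{C}_b(U))$ by such step functions.

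The key quantitative ingredient is a uniform bound. Writing $I_\nu(f)=\int_{U\times[0,T]}f(t,u)\,\nu(du,dt)$ for a Young measure $\nu$, the defining marginal property $\nu(U\times D)=\mathcal{L}(D)$ gives, with $\phi(t):=\|f(t,\cdot)\|_{\mathcal{C}_b(U)}\in L^1(0,T)$,
\[
\abs{I_\nu(f)}\le\int_{U\times[0,T]}\phi(t)\,\nu(du,dt)=\int_0^T\phi(t)\,dt=\|f\|_{L^1(0,T;\mathcal{C}_b(U))}.
\]
Hence each $I_{\mu_n}$ and $I_\mu$ is a linear functional on $L^1(0,T;\mathcal{C}_b(U))$ of operator norm at most $1$, uniformly in $n$. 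Here $f$, viewed as a function of $(t,u)$, is jointly measurable because, being strongly measurable, it is the a.e.\ limit of step functions; this is where I would invoke the Pettis measurability theorem, so that an essentially separable range is available even though $\mathcal{C}_b(U)$ itself need not be separable.

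Finally I would run the standard $3\varepsilon$ argument. Step functions are dense in the Bochner space $L^1(0,T;\mathcal{C}_b(U))$, so given $\varepsilon>0$ one may choose a step function $f_\varepsilon$ with $\|f-f_\varepsilon\|_{L^1(0,T;\mathcal{C}_b(U))}<\varepsilon$. Then
\[
\abs{I_{\mu_n}(f)-I_\mu(f)}\le\abs{I_{\mu_n}(f-f_\varepsilon)}+\abs{I_{\mu_n}(f_\varepsilon)-I_\mu(f_\varepsilon)}+\abs{I_\mu(f_\varepsilon-f)}.
\]
The two outer terms are each $\le\|f-f_\varepsilon\|_{L^1(0,T;\mathcal{C}_b(U))}<\varepsilon$ by the uniform bound, while the middle term tends to $0$ as $n\to\infty$ by the base case. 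Thus $\limsup_{n\to\infty}\abs{I_{\mu_n}(f)-I_\mu(f)}\le 2\varepsilon$, and letting $\varepsilon\downarrow 0$ concludes the proof.

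As for the main obstacle: none of the steps is deep, but the point requiring care is the passage from the topological base case to all of $L^1(0,T;\mathcal{C}_b(U))$ when $\mathcal{C}_b(U)$ fails to be separable. This is precisely what the Suslin hypothesis together with Bochner-space theory handle: elements of $L^1(0,T;\mathcal{C}_b(U))$ are essentially separably valued, so both the density of step functions and the joint measurability needed to make $I_\nu(f)$ well defined are guaranteed.
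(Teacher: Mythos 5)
Your proof is correct, but it follows a genuinely different route from the paper's. The paper disposes of this theorem in one line: it applies Lemma \ref{Lem:lscym} (the semicontinuity result for stable convergence, quoted from the literature) to both $f$ and $-f$; since $f(t,\cdot)\in\mathcal{C}_b(U)$ is continuous, both $\pm f(t,\cdot)$ are lower semi-continuous and dominated by $G(t)=\norm{f(t,\cdot)}_{\mathcal{C}_b(U)}\in L^1(0,T)$, so the two one-sided $\liminf$ inequalities combine into the two-sided limit. Your argument instead builds the result from scratch out of the definition of the stable topology: linearity gives the base case for step functions $\sum_i\mathbf{1}_{D_i}(t)g_i(u)$, the marginal property $\nu(U\times D)=\mathcal{L}(D)$ gives the uniform operator-norm bound $\abs{I_\nu(f)}\le\norm{f}_{L^1(0,T;\mathcal{C}_b(U))}$, and density of step functions in the Bochner space closes the gap via the $3\varepsilon$ argument. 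What your approach buys is self-containedness --- it never invokes the semicontinuity lemma, and it is the ``portmanteau-free'' argument one would give if Lemma \ref{Lem:lscym} were unavailable; what it costs is the Bochner-space machinery (Pettis measurability, essential separable-valuedness) that you correctly flag as necessary because $\mathcal{C}_b(U)$ need not be separable, together with the joint-measurability check needed to make $I_\nu(f)$ well defined --- a point the paper's route also implicitly relies on but inherits from the hypotheses of the cited lemma. Both proofs are sound; yours is longer but more elementary, the paper's is shorter but leans on an external result.
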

 \begin{proof} Use Lemma \ref{Lem:lscym} with $f$ and $-f$. \end{proof}
	
We will need the following version of the so-called Fiber Product Lemma. The proof can be found in \cite[Lemma 2.17]{brzezniak2013optimal}.  For a measurable map $y:[0,T]\to U$,  we denote by $\underline{\delta}_{y(\cdot)}(\cdot)$ the \emph{degenerate Young measure} defined as $\underline{\delta}_{y(\cdot)}(du,dt)=\delta_{y(t)}(du)\,dt$.
\begin{lemma}[{\bf Fiber Product Lemma}]\label{Lem:fibpro}
	Let $\mathcal{S}$ and $U$ be separable metric spaces and let $y_n:[0,T]\to\mathcal{S}$ be a sequence of measurable mappings which converge pointwise to a mapping $y:[0,T]\to \mathcal{S}$. Let $\mu_n\to\mu$ stably in $\Y(0,T;U)$ and consider the following sequence of Young measures on $\mathcal{S}\times U$:
	\[
	(\underline{\delta}_{y_n}\otimes\mu_n)(dx,du,dt)=\delta_{y_n(t)}(dx)\,\mu_n(du,dt), \ \ n\in\mathds{N},
	\]
	and
	\[
	(\underline{\delta}_{y}\otimes\mu)(dx,du,dt)=\delta_{y(t)}(dx)\,\mu(du,dt).
	\]
	Then $\underline{\delta}_{y_n}\otimes\mu_n\to \underline{\delta}_{y}\otimes\mu$ stably in $\Y(0,T;S\times U)$.
\end{lemma}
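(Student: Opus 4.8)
The plan is to check stable convergence straight from its definition on the product space, turning every test against $\mathcal{C}_b(\mathcal{S}\times U)$ into a statement about the sequence $\mu_n$ on $U$ alone. Since the $\mathcal{S}$-marginal of $\underline{\delta}_{y_n}\otimes\mu_n$ is the Dirac mass at $y_n(t)$, for every $D\in\B([0,T])$ and every $G\in\mathcal{C}_b(\mathcal{S}\times U)$ we have
\[
\int_{(\mathcal{S}\times U)\times D}G\,d(\underline{\delta}_{y_n}\otimes\mu_n)=\int_{U\times D}G(y_n(t),u)\,\mu_n(du,dt),
\]
and likewise for the limit. Hence the whole claim reduces to showing, for all such $D$ and $G$,
\begin{equation*}
\int_{U\times D}G(y_n(t),u)\,\mu_n(du,dt)\longrightarrow\int_{U\times D}G(y(t),u)\,\mu(du,dt). \tag{$\star$}
\end{equation*}

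First I would settle $(\star)$ for product integrands $G(x,u)=\phi(x)\psi(u)$ with $\phi\in\mathcal{C}_b(\mathcal{S})$, $\psi\in\mathcal{C}_b(U)$, and then for finite sums by linearity. Writing the difference in $(\star)$ as
\[
\int_{U\times D}\!\bigl[\phi(y_n(t))-\phi(y(t))\bigr]\psi(u)\,\mu_n(du,dt)+\Bigl[\int_{U\times D}\!\phi(y(t))\psi(u)\,\mu_n(du,dt)-\int_{U\times D}\!\phi(y(t))\psi(u)\,\mu(du,dt)\Bigr],
\]
the first term is bounded by $\|\psi\|_\infty\int_D|\phi(y_n(t))-\phi(y(t))|\,dt$, which tends to $0$ by bounded convergence: $\phi(y_n(t))\to\phi(y(t))$ pointwise by continuity of $\phi$ and the pointwise convergence $y_n\to y$, while the $t$-marginal of each $\mu_n$ is the Lebesgue measure $\mathcal{L}$, so the integral against $\mu_n$ of a function of $t$ only equals its integral against $\mathcal{L}$. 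The second term tends to $0$ because the integrand $(t,u)\mapsto\mathbf{1}_D(t)\phi(y(t))\psi(u)$ takes values in the line $\R\psi$, hence is strongly measurable and lies in $L^1(0,T;\mathcal{C}_b(U))$, so Theorem \ref{thm:contym} applies to the stably convergent sequence $\mu_n\to\mu$.

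The hard part will be upgrading $(\star)$ from finite sums of products to an arbitrary $G\in\mathcal{C}_b(\mathcal{S}\times U)$, and this is the step I expect to be the main obstacle: $G$ need not be uniformly continuous in $x$ uniformly in $u$, so the crude bound $\sup_u|G(y_n(t),u)-G(y(t),u)|$ does not vanish. The structural fact that makes the passage work is that joint continuity of $G$ together with $y_n(t)\to y(t)$ yields \emph{continuous} convergence $G(y_n(t),\cdot)\to G(y(t),\cdot)$ on $U$, i.e. uniform convergence on every compact subset of $U$. To exploit this I would localise the mass onto a compact box. By Lusin's theorem choose, for $\eta>0$, a compact $D_\eta\subseteq[0,T]$ with $\mathcal{L}([0,T]\setminus D_\eta)<\eta$ on which $y$ is continuous, so that $y(D_\eta)$ is compact and, by Egorov's theorem, $y_n\to y$ uniformly on $D_\eta$; and control the $U$-direction by restricting to a compact $K_U\subseteq U$ carrying all but $\varepsilon$ of the mass of every $\mu_n$ and of $\mu$, which exists because $\{\mu_n\}$ is flexibly tight (Theorem \ref{flexiblytight}; this tightness is present in every application of the lemma and, for Suslin $U$, follows from stable convergence by the converse, compactness-implies-tightness, Prohorov-type argument behind Theorem \ref{prohym}). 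On the resulting compact box one approximates $G$ uniformly to within $\varepsilon$ by a finite sum $\sum_i\phi_i\otimes\psi_i$ via Stone--Weierstrass, applies the product case already proved to that sum, and bounds the two approximation errors uniformly in $n$ using $\|G\|_\infty$ together with the time-tail ($\eta$) and the $U$-tail ($\varepsilon$); letting $\varepsilon,\eta\downarrow0$ gives $(\star)$. The one genuinely delicate point is precisely that the localisation in $U$ must be uniform in $n$, which is exactly where the flexible tightness of the family enters.
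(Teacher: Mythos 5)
Your reduction to $(\star)$ and your treatment of product integrands $\phi\otimes\psi$ are fine (the one-dimensional-range trick does settle strong measurability; alternatively one can bypass Theorem \ref{thm:contym}, which is stated for Suslin $U$, by approximating $t\mapsto\mathbf 1_D(t)\phi(y(t))$ uniformly by simple functions and using the definition of stable convergence together with the fact that every $\mu_n$ has total mass $T$). The genuine gap is exactly at the step you yourself flag as delicate: the existence of a \emph{single} compact $K_U\subseteq U$ carrying all but $\varepsilon$ of the mass of \emph{every} $\mu_n$. Neither of your two justifications delivers this. Flexible tightness (Theorem \ref{flexiblytight}) --- which is indeed what is available in the paper's application, via $\sup_n\int\vartheta_2^p\,d\mu_n\le R$ with $\vartheta_2\in IC(0,T;U)$ --- produces a measurable \emph{family} of compacts $K_t$ depending on $t$, not one fixed compact; a $t$-dependent family cannot feed a Stone--Weierstrass approximation on a fixed box, and the compactness argument giving uniform convergence of $G(y_n(t),\cdot)$ breaks down because points $u_k\in K_{t_k}$ lie in different compacts and need not have convergent subsequences. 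The ``converse Prohorov'' claim is simply false beyond Polish spaces: by a classical theorem of Preiss, on $U=\mathds{Q}$ (a metrizable, $\sigma$-compact, Suslin space) there is a sequence of Borel probability measures $\nu_n\to\delta_0$ weakly that is not uniformly tight; setting $\mu_n(du,dt)=\nu_n(du)\,dt$ gives $\mu_n\to\delta_0(du)\,dt$ stably in $\Y(0,T;U)$ while no compact set carries uniform mass. Under the lemma's stated hypotheses ($U$ merely separable metric, where even a single Borel measure can fail to be tight) the step is further out of reach. There is also a secondary defect on the $\mathcal{S}$ side: since $\mathcal{S}$ need not be complete and the $y_n$ are only measurable, the set $\bigcup_{n\ge N}y_n(D_\eta)$ is totally bounded but its closure need not be compact, so your box need not exist; this one is repairable (apply Lusin to each $y_n$ and intersect the resulting time sets), but the $U$-side gap is not repairable within your scheme.

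For comparison: the paper does not prove the lemma itself but quotes \cite[Lemma 2.17]{brzezniak2013optimal}, and the argument there (going back to Valadier's fiber product lemma) is structured precisely so as to avoid any localization in $U$. One first observes that it suffices to prove the one-sided bound
\begin{equation*}
\liminf_{n\to\infty}\int h(t,y_n(t),u)\,\mu_n(du,dt)\;\ge\;\int h(t,y(t),u)\,\mu(du,dt)
\end{equation*}
for bounded nonnegative integrands $h$ that are measurable in $t$ and lower semicontinuous in $(x,u)$, since applying it to both $\|G\|_\infty+\mathbf 1_D(t)G(x,u)$ and $\|G\|_\infty-\mathbf 1_D(t)G(x,u)$ yields the two-sided convergence required by stable convergence. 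One then replaces $h$ by its Lipschitz (Moreau--Yosida) regularization $h_k(t,x,u)=\inf_{(x',u')}\{h(t,x',u')+k\,d_{\mathcal S}(x,x')+k\,d_U(u,u')\}$, which increases pointwise to $h$ and satisfies $\abs{h_k(t,y_n(t),u)-h_k(t,y(t),u)}\le\min\{k\,d_{\mathcal S}(y_n(t),y(t)),\|h\|_\infty\}$ \emph{uniformly in} $u\in U$; the resulting error term vanishes by dominated convergence in $t$ alone (using the Lebesgue marginal of $\mu_n$), the term $\int h_k(t,y(t),u)\,\mu_n(du,dt)$ is handled by the semicontinuity Lemma \ref{Lem:lscym}, and one concludes by monotone convergence letting $k\to\infty$. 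The uniformity in $u$ of the Lipschitz estimate is what replaces --- and renders unnecessary --- the compact localization in $U$ that your argument cannot supply.
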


\section*{Acknowledgement}
The first and third authors thank the Alianza EFI-Colombia Cientifica grant, codes 60185 and FP44842-220-2018 for financial support.
The research of the second author benefited from the financial support of the chairs ``Deep Finance \& Statistics'' and ``Machine Learning \& Systematic Methods in Finance'' of \'Ecole Polytechnique. The second author acknowledges support from the Europlace Institute of Finance (EIF) and the Labex Louis Bachelier, research project: ``The impact of information on financial markets'', and from the MATH AmSud project VOS 22-MATH-08 and the ECOS program C21E07. 

\bibliographystyle{plain}
\bibliography{biblioCSVE}

\end{document}